\numberwithin{equation}{section}
\renewcommand{\theequation}{\arabic{section}.\arabic{equation}}
\newcounter{subeqn}
\renewcommand{\thesubeqn}{\theequation\alph{subeqn}}
\newcommand{\subeqn}{%
  \refstepcounter{subeqn}
  \tag{\thesubeqn}
}
\newcommand{\newseq}{%
  \refstepcounter{equation}
}
\renewcommand*\FXLayoutInline[3]{%
  {\@fxuseface{inline}
  \ignorespaces\noindent \ovalbox{\hspace{.01\textwidth} \begin{minipage}{.95\textwidth}
  	#3 \fxnotename{#1}: #2
  \end{minipage}\hspace{.01\textwidth}}}
  \newline}
\newcommand{\Ben}[1]{\fxnote[inline,author=Ben]{\textcolor{blue!50}{ #1}}}
\newcommand{\Aiden}[1]{\fxnote[inline,author=Aiden]{\textcolor{red!50}{ #1}}}
\newtheorem{theorem}{Theorem}[section]
\newtheorem{proposition}[theorem]{Proposition}
\newtheorem{lemma}[theorem]{Lemma}
\newtheorem{definition}[theorem]{Definition}
\newtheorem{conjecture}[theorem]{Conjecture}
\newtheorem{algorithm}[theorem]{Algorithm}
\theoremstyle{remark}
\newtheorem{remark}[theorem]{Remark}
\newtheorem{examplex}{Example}
\newcommand{\Gr}{\mathrm{Gr}}
\newcommand{\C}{\mathbb{C}}
\newcommand{\R}{\mathbb{R}}
\newcommand{\Z}{\mathbb{Z}}
\newcommand{\N}{\mathbb{N}}
\newcommand{\GL}{\mathrm{GL}}
\newcommand{\Hom}{\mathrm{Hom}}
\newcommand{\taut}{\mathcal{T}}
\newcommand{\wcoor}{e}
\newcommand{\coord}{D}
\newcommand{\coul}{\mathbf{A}}
\newcommand{\vertex}{\mathsf{V}}
\newcommand{\edge}{\mathsf{E}}
\newcommand{\Bw}{\mathbf{w}}
\newcommand{\Bv}{\mathbf{v}}
\newcommand{\fM}{\mathfrak{M}}
\newcommand{\Lotimes}{\overset{\mathbb{L}}{\otimes}}
\newcommand{\End}{\operatorname{End}}
\newcommand{\Ext}{\operatorname{Ext}}
\newcommand{\mmod}{\operatorname{-mod}}
\newcommand{\wedgetwo}{\bigwedge{}^{\!\!2}}
\newcommand{\wedgek}{\bigwedge{}^{\!\!k}}
\newcommand{\wedgenk}{\bigwedge{}^{\!\!n-k}}
\newcommand{\trans}{\gamma}
\newcommand{\Sym}{\operatorname{Sym}}
\newcommand{\exten}{\mathcal{H}}
\newcommand\singdot[1]{
    \filldraw[fill=black, draw=black] (#1) circle (1.5pt)
}
\newcommand{\tilting}{\mathcal{Z}}
\newcommand{\Bi}{\mathbf{i}}
\newcommand{\Bj}{\mathbf{j}}
\newcommand{\Loc}[1]{\operatorname{Loc}(#1)}
\newcommand{\BK}{{\reflectbox{\rm R}}}
\newcommand\dotstrand{
    \begin{tikzpicture}[baseline=-3pt]
        \draw (0,-0.2) -- (0,0.2);
        \singdot{0,0};
    \end{tikzpicture}
}
\newcommand{\Zij}[3]{Z^{(#3)}_{#2,#1}}
\newcommand{\rk}{\operatorname{rk}}
\newcommand{\Proj}{\operatorname{Proj}}
\newcommand{\Spec}{\operatorname{Spec}}
\newcommand{\cL}{\mathcal{L}}
\newcommand{\cO}{\mathcal{O}}
\tikzset{wei/.style={draw=red,double=red!40!white,double distance=1.5pt,thin}}
\tikzset{fringe/.style={gray,postaction={decoration=border,decorate,draw,gray, segment length=4pt,thick}}}
\title{Tilting Generator for the $T^*\Gr(2,4)$ Coulomb Branch}
\author{Aiden Suter and Ben Webster}
\address{University of Waterloo \& Perimeter Institute, Waterloo, ON, Canada}
\email{adsuter@uwaterloo.ca}
\email{ben.webster@uwaterloo.ca}
\date{\today}
\begin{document}
\begin{abstract}
    Remarkable work of Kaledin, based on earlier joint work with Bezrukavnikov, has constructed a tilting generator of the category of coherent sheaves on a very general class of symplectic resolutions of singularities.

    In this paper, we give a concrete construction of this tilting generator on the cotangent bundle of $Gr(2,4)$, the Grassmannian of 2-planes in $\mathbb{C}^4$.  This construction builds on work of the second author describing these tilting bundles in terms of KLRW algebras, but in this low-dimensional case, we are able to describe our tilting generator as a sum of geometrically natural bundles on $T^*Gr(2,4)$: line bundles and their extensions, as well as the tautological bundle and its perpendicular.  
\end{abstract}
\maketitle
\tableofcontents

\section{Introduction}

One of the cornerstone ideas of geometric representation theory is the construction of equivalences between algebraic and geometric categories.  Perhaps the most famous example of such an equivalence is the Beilinson-Bernstein theorem, which relates modules over a semi-simple Lie algebra and D-modules on the corresponding flag variety.  The functor relating these categories is simply taking sections, and so is represented by the algebra of twisted differential operators itself, thought of as a twisted D-module.  

If, instead of D-modules, we consider the coherent sheaves on a variety which is not affine, then the structure of this category conspires against the existence of a similar theorem: in particular, this category will effectively never have projective objects, so there are no representable exact functors.  On the other hand, we can fix this defect by passing to derived categories and functors.  For example, by \cite[Th. 5.3.1]{BMRR}, if we consider a base field $\Bbbk$ of large positive characteristic, the category of coherent sheaves on $T^*G/B$ set-theoretically supported on $G/B$ is equivalent to the principal block of the category of finite-dimensional representations of the Lie algebra $U(\mathfrak{g})$ over $\Bbbk$.

As with any equivalence to the category of modules over a ring, the key to the equivalence is the image of the ring as a module over itself.  This is a little more subtle, due to the finiteness conditions, but this functor is of the form $\mathbb{R}\operatorname{Hom}(T,-)$ for $T$ a vector bundle which is a {\bf tilting generator}.

More generally, let $X$ be a variety over a field $\Bbbk$ which is projective over an affine variety (that is, a closed subvariety of $\mathbb{A}^k_{\Bbbk}\times \mathbb{P}^n_{\Bbbk}$).  
For a vector bundle $T$, let $A=\End(T)^{\operatorname{op}}$ and $A\mmod$ be the abelian category of finitely presented $A$-modules.  
\begin{definition}
    A vector bundle $\tilting$ on a quasi-projective variety $X$ is called a {\bf tilting generator} if the functor 
    \[\mathbb{R}\operatorname{Hom}(\tilting,-)\colon D^b(\mathsf{Coh}(X))\to D^b(A\mmod)\]
    is an equivalence of categories.
\end{definition}
Note that this functor sends $\tilting$ to $A$ as a left module over itself, and thus implies that:
\begin{enumerate}
    \item For all $i> 0$, we have $\Ext^i(\tilting,\tilting)=0$.
    \item The object $\tilting$ generates $D^b(\mathsf{Coh}(X))$, that is,
    $\mathbb{R}\operatorname{Hom}(\tilting,M)=0$ for a coherent sheaf $M$ if and only if $M=0$.  In fact, by adjunction, we have:
    \[M\cong \mathbb{R}\operatorname{Hom}(\tilting,M)\Lotimes_A \tilting.\]
\end{enumerate}
    In more concrete terms, under this equivalence, projective $A$-modules correspond to summands of $\tilting^{\oplus q}$.  
    The module $\mathbb{R}\operatorname{Hom}(\tilting,M)$ has a finite projective resolution, and we can reconstruct $M$ by turning each of these projectives into the corresponding projective.  

    Note, there are several natural operations which preserve the set of tilting generators:
    \begin{enumerate}
    \renewcommand{\theenumi}{\roman{enumi}}
        \item The dual of a tilting generator is a tilting generator.
        \item The tensor product of a tilting generator with any line bundle is a tilting generator.
        \item Any sheaf $\tilting'$ that is {\bf equiconstituted} with a tilting generator $\tilting$ (that is, a coherent sheaf is a summand of $(\tilting')^{\oplus r}$ for some $r$ if and only if it is a summand of $\tilting^{\oplus s}$ for some $s$) is a tilting generator.  
    \end{enumerate}

In general, there is no guarantee that a given variety carries a tilting generator, but a remarkable theorem of Kaledin \cite{KalDEQ} guarantees that a conical symplectic resolution of singularities must carry one.  In fact, the same algebra $A$ appears as the endomorphisms of a tilting generator on any such resolution of a given conical symplectic singularity, giving a beautiful proof of Kawamata's conjecture that K-equivalence implies D-equivalence in this case.  The algebra $A$ is a {\bf noncommutative crepant resolution of singularities}, so we can think of this result as proving the D-equivalence of two usual resolutions of the singularities factoring through a noncommutative resolution.  

Unfortunately, Kaledin's prescription is quite abstract and passes through quantization in characteristic $p$.  In particular, our description of coherent sheaves on $T^*G/B$ in terms of $U(\mathfrak{gl}_n)$ has obvious defects---it only covers the coherent sheaves with set-theoretic support on the zero-section and only works in characteristic $p$.

In order to fix this defect, we must replace $U(\mathfrak{gl}_n)$ by the algebra $A=\End(T)^{\operatorname{op}}$ for $T$ a corresponding tilting bundle, but while \cite{KalDEQ} guarantees the existence of such a bundle, it does not provide much guidance on how to compute with it.  

In some special cases, we can compute this tilting bundle and its endomorphism algebra more explicitly.  For example, for the cotangent bundle $T^*\mathbb{CP}^{n-1}$, we can show on abstract grounds that this tilting generator is a sum of line bundles.  An easy calulation confirms that the only sum of line bundles which gives a tilting generator is the pullback of the famous tilting generator of Beilinson $\mathcal{O}(a)\oplus \dots \oplus \mathcal{O}(a-n)$ for some $a\in \Z$ \cite{beilinsonCoherentSheaves1978} (see \Cref{sec:Pn} for more details).  However, it is rare that an algebraic variety has a tilting generator which is a sum of line bundles, so we look for richer special classes that we can study.  

The one that will be relevant for us is when the symplectic singularity is a Coulomb branch in the sense of Braverman, Finkelberg, and Nakajima \cite{BFN}.  This case is studied in the work of the second author \cite{websterCoherentSheaves2019,WebcohII}, especially in the case of {\bf bow varieties}, which are realized as Coulomb branches by work of Nakajima and Takayama \cite{nakajimaCherkisBow2017}. In particular, the algebra $A$ in this case can be realized as a cylindrical version of a KLRW algebra.  

This description has its own defects, in that the resulting coherent sheaves, described in terms of modules over the projective coordinate ring of a resolution, are hard to compare with other descriptions of coherent sheaves on these varieties.

Our aim in this paper is to give a description of the resulting coherent sheaves in one of the most familiar of these singularities, the cotangent bundle $T^*X$ of the Grassmannian $X=\Gr(2,4)$.  The sheaves we will need below are: the structure sheaf $\mathcal{O}$, the unique ample line bundle $\cL$ that generates $\operatorname{Pic}(T^*X)\cong \Z$, the tautological bundle $\mathcal{T}\subset \mathcal{O}^{\oplus 4},$ and $\mathcal{H}$, the unique extension $\cL^{-2}\to \mathcal{H}\to \mathcal{O}$ such that $H^1(T^*X;\mathcal{H})=0$.  
\begin{theorem}
    Under the natural operations mentioned above, any of Kaledin's tilting bundles on $T^*X$ will be equivalent to the tilting generator $\tilting\cong \mathcal{O}\oplus \cL^{-1}\oplus \mathcal{H}\oplus (\mathcal{H}\otimes \cL)\oplus \mathcal{T}\oplus \mathcal{T}^{\perp}$.  
\end{theorem}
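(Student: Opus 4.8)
The plan is to sidestep Kaledin's abstract construction by importing the concrete description. The variety $T^*\Gr(2,4)$ is the Coulomb branch / Nakajima quiver variety attached to the one-vertex quiver with gauge dimension $2$ and framing dimension $4$, so by \cite{websterCoherentSheaves2019,WebcohII} it carries a tilting generator $\tilting_{\mathrm{KLRW}}$ with $\End(\tilting_{\mathrm{KLRW}})^{\mathrm{op}}$ a cylindrical KLRW algebra, and by the results cited this bundle represents Kaledin's tilting generators up to the natural operations. Thus it suffices, after fixing a normalization by a line-bundle twist and/or a dualization, to establish the isomorphism $\tilting_{\mathrm{KLRW}}\cong\cO\oplus\cL^{-1}\oplus\mathcal{H}\oplus(\mathcal{H}\otimes\cL)\oplus\mathcal{T}\oplus\mathcal{T}^{\perp}$; this simultaneously re-proves that the right-hand side is a tilting generator, so no separate Ext-vanishing or generation check is needed.

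Next I would decompose $\tilting_{\mathrm{KLRW}}$ into indecomposable summands. These are the projective covers indexed by the combinatorial loadings of two black strands among four red strands; there are exactly six, matching both $\chi(\Gr(2,4))=\binom{4}{2}=6$ and the six proposed summands. Each summand $P_{\mathbf i}$ is standardly filtered, with associated graded a direct sum of line bundles coming from the KLRW standard objects, so one can read off its class in $K$-theory (or its $\C^\times$-equivariant Chern character). Combined with the restriction of each $P_{\mathbf i}$ to the zero section $X=\Gr(2,4)$ — computable by Borel--Weil--Bott, using that $\pi\colon T^*X\to X$ is affine with $\pi_*\cO_{T^*X}\cong\Sym^\bullet(\Omega_X^\vee)$ — these invariants separate the six summands: two are the line bundles $\cO$ and $\cL^{-1}$; two are the rank-two bundles $\mathcal{T}$ and $\mathcal{T}^{\perp}$ (up to a twist absorbed by the normalization); and the remaining two are rank-two with $K$-classes $[\cO]+[\cL^{-2}]$ and $[\cL]+[\cL^{-1}]$, hence extensions of $\cO$ by $\cL^{-2}$ and the $\cL$-twist thereof.

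The crux, and the step I expect to cause the most trouble, is showing that those last two summands are the \emph{nonsplit} extensions, i.e. $\mathcal{H}$ and $\mathcal{H}\otimes\cL$. I would compute $\Ext^1_{T^*X}(\cO,\cL^{-2})=H^1(T^*X;\cL^{-2})=\bigoplus_{m\ge 0}H^1\!\big(\Gr(2,4);\cL^{-2}\otimes\Sym^m(\Omega_X^\vee)\big)$ and check that it is one-dimensional; then there is a unique nonsplit extension, which is $\mathcal{H}$ by definition, and the long exact sequence in cohomology shows that its connecting map surjects onto this one-dimensional space, forcing $H^1(T^*X;\mathcal{H})=0$, whereas the split extension $\cO\oplus\cL^{-2}$ has $H^1\neq 0$. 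It then remains to see that the KLRW summand $P$ in question is not split: it is indecomposable (a summand of a tilting generator of the connected category $D^b(\mathsf{Coh}(T^*X))$ has local endomorphism ring), so it cannot be $\cO\oplus\cL^{-2}$ and must be the unique nonsplit extension $\mathcal{H}$; tensoring by $\cL$ handles the other summand. This pins down all six summands and completes the identification.

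Finally, I would reconcile the bookkeeping of the natural operations with the precise form of the statement: the choice of which generator of $\operatorname{Pic}(T^*X)\cong\Z$ is called $\cL$, the normalization of $\mathcal{H}$ by $H^1=0$, and the fact that dualizing interchanges $\mathcal{T}$ with $\mathcal{T}^\vee\cong\mathcal{T}\otimes\cL$ (and analogously for $\mathcal{T}^{\perp}$ and $\mathcal{H}$), combined with tensoring by powers of $\cL$, account exactly for the ambiguity in "any of Kaledin's tilting bundles". The only genuinely hard computational inputs are the one-dimensionality of $\Ext^1_{T^*X}(\cO,\cL^{-2})$ and the $K$-theoretic/zero-section matching of the six summands; everything else is Borel--Weil--Bott on $\Gr(2,4)$ together with the structural results already available in \cite{websterCoherentSheaves2019,WebcohII}.
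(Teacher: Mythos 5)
Your overall skeleton is the same as the paper's (take the localization bundle $\tilting=\Loc{e\mathbf{R}}$, which is known to agree with Kaledin's generator, reduce to six indecomposable summands, and identify them geometrically), but the identification step you propose has genuine gaps. First, the claim that each summand is filtered with line-bundle subquotients, so that its $K$-class is a sum of line-bundle classes, cannot be correct: $\operatorname{Pic}(T^*X)\cong\Z\cdot\cL$, and the classes $[\cL^{k}]$ span only a rank-$5$ subgroup of the rank-$6$ $K$-theory (their Chern characters are polynomials in $\sigma_1$, while $\operatorname{ch}_2(\mathcal{T})=\tfrac12(\sigma_2-\sigma_{1,1})$ is not), so $[\mathcal{T}]$ and $[\mathcal{T}^{\perp}]$ are not sums of line-bundle classes and your reading-off procedure fails precisely for the rank-two summands. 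Second, restriction to the zero section does not separate the candidates: since $H^1(X;\cL^{-2})=0$, the restriction $\mathcal{H}|_X\cong\mathcal{O}_X\oplus\cL^{-2}|_X$ splits, and more generally a $\C^*$-equivariant bundle on $T^*X$ is not determined by its restriction to $X$ (that is the whole point of $\mathcal{H}$); likewise, knowing that a summand restricts to $\mathcal{T}$ on $X$ does not give $\pi^*\mathcal{T}$ on $T^*X$ without a rigidity argument. Third, $\Ext^1_{T^*X}(\mathcal{O},\cL^{-2})$ is not one-dimensional: by \eqref{eq:ext1} it is free of rank one over $\Gamma(T^*X;\mathcal{O})$, so ``the unique nonsplit extension'' is not well-defined. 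What is true (\Cref{indecomp-ext}) is that the extension killed in $H^1$, equivalently the one built from the one-dimensional lowest scaling-weight line, is unique; to invoke it you must first know that the KLRW summand actually \emph{is} an extension of $\mathcal{O}$ by $\cL^{-2}$ and then that it satisfies $H^1=0$, neither of which follows from the invariants you use.

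There is also a bookkeeping problem in your setup: the tilting bundle of \cite{websterCoherentSheaves2019,WebcohII} used here arises from the Coulomb branch of the $A_3$ quiver gauge theory with $\Bw=(0,2,0)$, $\Bv=(1,2,1)$ --- diagrams with two red strands and four black strands labelled $1,2,2,3$ --- not from the one-vertex quiver with $v=2$, $w=4$ (that is the Higgs-branch/quiver-variety description of $T^*\Gr(2,4)$). Consequently ``loadings of two black strands among four red strands'' is not the relevant index set, and the reduction to exactly six indecomposable summands is not a count of projective covers: it uses \Cref{wordlemma} to collapse the many idempotent words up to equiconstitution, together with a separate indecomposability check. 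The paper closes the identification by producing explicit free generators of each localized module on the charts $U_{13}$, $U_{24}$ and computing transition matrices diagrammatically, then matching them against the transition matrices of $\cL^{k}$, $\mathcal{T}$, $\mathcal{T}^{\perp}$ from \Cref{transition-lemma-2} and of $\mathcal{H}$, $\mathcal{H}\otimes\cL$ from \eqref{eq:H-transition}--\eqref{eq:HL-transition}; some substitute for this computation (or a genuine rigidity statement on $T^*X$, not just on $X$ or in $K$-theory) is needed to make your route work.
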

In fact, we don't need operation i., since $\tilting^*\cong \tilting \otimes \cL$.  
A natural question is how this construction compares with other known constructions of tilting bundles on $X$ and $T^*X$.  Kapranov has famously constructed a tilting generator on the Grassmannian $X$ \cite{kapranovDerivedCategories1988}; taking the dual of the version of this generator given in \cite[\S 2.1.2]{donovanWindowShifts2014}, one of Kapranov's generators on $\Gr(2,4)$ is given by \[\mathcal{O}\oplus \cL^{-1}\oplus \cL^{-2}\oplus \mathcal{T}\oplus \mathcal{T}\otimes \cL^{-1}\oplus \Sym^2(\mathcal{T}).\]
As noted in \cite[Rem. 3.6]{kawamataDerivedEquivalence2006}, the pullback of this vector bundle to $T^*X$ is no longer tilting: $\Ext^1(\mathcal{O},\mathcal{L}^{-2})=H^1(T^*X;\mathcal{L}^{-2})\neq 0.$  Since $\mathcal{H}$ is essentially constructed to extend $\mathcal{L}^{-2}$ so as to kill these higher extensions, there are some hints that our tilting generator might be a correction of Kapranov's, though there is another important difference---Kapranov's generator is always a sum of Schur functors applied to the tautological bundle and its dual, but $\mathcal{T}^{\perp}$ is not of this form.  

A recent preprint of Tseu \cite{tseuCanonicalTilting2024} gives a construction of a tilting bundle on $T^*\Gr(2,n)$ for all values of $n$.  We show in \Cref{tseu} that for $n=4$, up to tensor product with a line bundle, it coincides with ours and thus with Kaledin's.  

A more implicit construction of a tilting generator on this space is given by Toda--Uehara \cite{todaTiltingGenerators2010}; note that this construction is not canonical, so we cannot speak of {\it the} Toda-Uehara tilting generator, only the class of generators they construct, which are unique up to equiconstitution. That is, the indecomposable summands that appear are well defined, but different choices can lead to different multiplicities in the final tilting generator.  At the moment, due to the difficulty of making their construction explicit, we have not been able to prove or disprove the equivalence of our construction.  However, we can show that this tilting generators has a large summand in common (\Cref{prop:TU-comparison}), leading us to conjecture that:
\begin{conjecture}
    The tilting generator $\mathcal{Z}$ is equiconstituted with any Toda-Uehara tilting generator.
\end{conjecture}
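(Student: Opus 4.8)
We outline the approach we expect to prove this conjecture. Since two tilting generators are equiconstituted exactly when they have the same set of indecomposable direct summands (by the Krull--Schmidt property of vector bundles on $T^*X$), and since the set of indecomposable summands occurring in a Toda--Uehara generator is independent of the choices involved, it is enough to show that for one such generator $T$ the indecomposable summands of $T$ are precisely $\cO$, $\cL^{-1}$, $\mathcal{H}$, $\mathcal{H}\otimes\cL$, $\mathcal{T}$ and $\mathcal{T}^{\perp}$. That these six bundles are the indecomposable constituents of $\mathcal{Z}$ is routine: the line bundles are indecomposable, the bundles $\mathcal{H}$ and $\mathcal{H}\otimes\cL$ are indecomposable since the sub- and quotient-line-bundles in the sequences defining them are non-isomorphic, and $\mathcal{T}$, $\mathcal{T}^{\perp}$ restrict to indecomposable bundles on the zero section. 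Moreover \Cref{prop:TU-comparison} already produces a large subsum of these six as a direct summand of $T$; the content of the conjecture is therefore to pin down the missing summands and to verify that $T$ has no others.

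Concretely, one must make the Toda--Uehara procedure explicit, in the form that applies here, on $\pi\colon T^*X\to\fM_0$, where $\fM_0$ is the affine Coulomb branch: starting from $\cO$ and a $\pi$-ample line bundle, which we may take to be $\cL^{-1}$, one forms at each stage a universal extension
\[ 0\longrightarrow \mathcal{F}_j^{\oplus m_j}\longrightarrow \mathcal{F}_{j+1}\longrightarrow \cL^{-j-1}\longrightarrow 0 \]
(or its dual) chosen to kill the relevant $\Ext^1$, the process terminating by relative Serre vanishing over the affine base; then $\bigoplus_j\mathcal{F}_j$ is the tilting generator. The whole problem reduces to the Krull--Schmidt decomposition of the bundles $\mathcal{F}_j$, and the $\Ext$-groups controlling the extensions are entirely computable after pushing down to $X=\Gr(2,4)$, using $\pi_*\cO_{T^*X}\cong\bigoplus_{i\ge 0}\Sym^i T_X$ together with $T_X\cong \mathcal{T}^{\vee}\otimes(\cO^{4}/\mathcal{T})$. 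We expect the first universal extension to reproduce $\mathcal{H}$ and $\mathcal{H}\otimes\cL$ --- an extension of a line bundle by a line bundle being exactly the sort of object those bundles were built to be --- while at the stage governed by $\cL^{-2}$ the universal-extension bundle should be forced to split off copies of $\mathcal{T}$ and $\mathcal{T}^{\perp}$ via the tautological Euler sequences $0\to\mathcal{T}\to\cO^{4}\to\cO^{4}/\mathcal{T}\to 0$ and its perpendicular. This is the same mechanism by which Kapranov's generator on the Grassmannian itself requires the tautological bundle and cannot be built from line bundles alone.

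Once the decompositions of the $\mathcal{F}_j$ are known, both the occurrence of the six bundles as summands of $T$ and the absence of any further indecomposable summand are read off directly; as a consistency check one matches the outcome against the description of the indecomposable projectives of the KLRW-type algebra $A=\End(\mathcal{Z})^{\mathrm{op}}$ from the earlier sections (recall every tilting generator on $T^*X$ has endomorphism algebra derived equivalent to $A$). Alternatively, for a specific candidate summand $S$, one can verify that $S$ is a summand of $T^{\oplus r}$ by checking that $\mathbb{R}\Hom(T,S)$ is a projective $A$-module concentrated in degree zero, which again comes down to cohomology computations on $T^*X$ of the kind above.

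The hard part is precisely the step the introduction flags: the Toda--Uehara construction is only implicit, the $\Ext^1$-spaces governing the universal extensions are far from one-dimensional, so the intermediate bundles $\mathcal{F}_j$ are large, and it is not transparent that they split off $\mathcal{T}$ and --- especially --- $\mathcal{T}^{\perp}$, which is neither an extension of line bundles nor a Schur functor applied to $\mathcal{T}$. Establishing these splittings requires a careful analysis of the multiplication maps $H^0(X;\cL)\otimes H^{i}(X;\cL^{-j-1}\otimes\Sym^{\bullet}T_X)\to H^{i}(X;\cL^{-j}\otimes\Sym^{\bullet}T_X)$ and of how they assemble the universal extension, together with an explicit realization of $\mathcal{T}$ and $\mathcal{T}^{\perp}$ inside the resulting bundles; the possibility that some unexpected indecomposable bundle also appears is exactly what keeps the statement a conjecture rather than a theorem, with \Cref{prop:TU-comparison} accounting for the bulk of it.
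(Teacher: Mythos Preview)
This statement is a \emph{conjecture} in the paper, not a theorem: the authors explicitly write that they ``have not been able to prove or disprove the equivalence'' and leave it open. There is therefore no proof in the paper to compare your attempt against, and your proposal is, appropriately, an outline of an approach rather than a proof; you yourself close by noting that pinning down the missing splittings is exactly ``what keeps the statement a conjecture rather than a theorem.''

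Two points of calibration against what the paper does establish. First, your universal extension has the roles reversed relative to the paper's version of the Toda--Uehara algorithm: there one forms $\mathcal{L}^{-k}\hookrightarrow \mathcal{N}_k\twoheadrightarrow \mathcal{E}_{k-1}^{\oplus r}$, with the new line bundle as the \emph{sub}object, not the quotient (your ``or its dual'' hedge covers this, but the paper's convention is the one underlying \Cref{prop:TU-comparison}). Second, and more substantively, your expectation that $\mathcal{T}$ and $\mathcal{T}^{\perp}$ are forced to appear ``at the stage governed by $\mathcal{L}^{-2}$'' does not match the paper's partial computation: \Cref{prop:TU-comparison} shows that stages $k=2$ and $k=3$ yield only the rank-two extensions $\mathcal{H}$ and a twist of $\mathcal{H}$, so the tautological bundles, if they occur, must arise at the $k=4$ step---precisely the ``more complicated process'' the paper flags because several $\Ext^i(\mathcal{E}_3,\mathcal{L}^{-4})$ with $i>1$ can be nonzero. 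That step is the one neither you nor the authors carry out, and it is where the conjecture actually lives.
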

An obvious question is whether these results can be applied to Grassmannians in higher ranks.  Our diagrammatic description of $T^*X$ and its tilting generator is still valid in the higher rank case, but the number of summands of $\tilting$ will explode in complexity. Thus, it is hard to extrapolate from the cases considered here which summands are likely to appear.  An {\it ad hoc} analysis as we have done here will rapidly cease to be practical, and some more systematic approach seems more likely to succeed.  

\section{Preliminaries}
\subsection{$T^*\Gr(n-k,n)$ and its vector bundles}\label{sec:TGr}
Let $X=\mathrm{Gr}(n-k,n)$ be the Grassmannian of codimension $k$ subspaces (or equivalently, dimension $k$ quotients) in $\C^n$.  
We can also express $X$ as the partial flag variety $G/P$ for $G=GL(n,\C)$ and
\begin{align*}
    &&P&=\left\lbrace \left(\begin{array}{cc}
        A & 0 \\
        C &  D
    \end{array}\right)\mid A\in GL(k,\C),D\in GL(n-k,\C)\; C\in M_{n-k,k}(\C)   \right\rbrace.&
\end{align*}
Consider the Pl\"ucker embedding: 
\begin{align*}
   && \iota: X&\hookrightarrow \mathbb{P}\left(\wedgenk\C^n\right )\cong \mathbb{P}^{\binom{n}{k}-1}&\\
   && V &\mapsto \wedgenk V.&
\end{align*}
We let $e_1,\dots, e_n$ be the usual coordinates of $\mathbb{C}^n$.  We will use the induced Pl\"ucker coordinates, which satisfy the Pl\"ucker relations.   The case of primary interest to us is $k=2$ and $n=4$, where Pl\"ucker coordinates correspond to ordered pairs $\lbrace \wcoor_{ij}:=e_i\wedge e_j\rbrace$ and there is a single Pl\"ucker relation:
\begin{equation}\label{plucker}
    \wcoor_{12}\wcoor_{34}-\wcoor_{13}\wcoor_{24}+\wcoor_{14}\wcoor_{23}=0.
\end{equation}
We'll let $\mathcal{L}$ denote the pullback of $\mathcal{O}(1)$ under this map; this is the unique ample generator of the Picard group of $X$ and its pullback to $T^*X$ is the unique ample generator of the Picard group there.  We have an isomorphism $\Gamma(X;\mathcal{L})\cong \wedgek\C^n$ as representations of $GL_n$ by Borel-Weil; more generally, $\Gamma(X;\mathcal{L}^{\otimes p})$ is an irreducible representation of $GL_n$ whose highest weight is $p\omega_k$, where $\omega_k$ is the fundamental highest weight of $\wedgek\C^n$.  

We'll also want to consider the trivial bundle $\mathcal{E}=\mathcal{O}_X^{\oplus n}$ of rank $n$.  This contains the rank $n-k$ tautological bundle $\mathcal{T}$ whose fiber over a subspace $U\subset \C^n$ is the subspace itself;  let $\mathcal{Q}=\mathcal{E}/\mathcal{T}$ be the tautological quotient bundle of rank $k$.  Note that we can think of $e_1,e_2,e_3,e_4$ as sections of $\mathcal{Q}$, and thus of $\wcoor_{ij}$ as sections of $\wedgek\mathcal{Q}\cong (\mathcal{E}\wedge\mathcal{E})/(\mathcal{E}\wedge\mathcal{T})\cong \mathcal{L}$; this map induces the isomorphism $\Gamma(X;\mathcal{L})\cong \wedgek \C^n$.  Dually, we have $\wedgenk\mathcal{T}\cong \mathcal{L}^{-1}$.

The bundle $\mathcal{E}=\mathcal{O}_X^{\oplus n}$ is self-dual via the usual dot product, so we have the dual filtration of $\mathcal{E}$ by the subbundle $\mathcal{T}^{\perp}\cong \mathcal{Q}^*$.  We have an isomorphism $\Gr(n-k,n)\cong \Gr(k,n)$ which swaps the role of the tautological bundles and their perpendiculars.  This map is not $GL_n$-equivariant; rather, it intertwines the action of $g\in GL_n$ with $(g^T)^{-1}$, its inverse-transpose.  Depending on which of these $GL_n$ actions we use, we can identify the sections of $\mathcal{L}$ with $\wedgek\C^n$ or $\wedgenk\C^n$.

For the rest of the section below, we specialize to $k=2,n=4$ for simplicity of notation, though most of the results below extend to the general case.  In this case, both $\mathcal{T}$ and $\mathcal{Q}$ have rank 2. Since any rank 2 vector bundle satisfies $V\cong V^*\otimes \wedgetwo V$, we have $\mathcal{T}^{\perp}\cong \mathcal{Q}\otimes \mathcal{L}^{-1}$.

For any very ample line bundle on a projective variety, we can obtain an open cover by taking the non-vanishing sets of a basis of the space of sections.  Applying this to $\mathcal{L}$, we find that  $X$ has an open cover $\mathcal{U}=\lbrace U_{ij}\rbrace$ where
\begin{align*}
    &&U_{ij}&=X\backslash\lbrace \wcoor_{ij}=0\rbrace.&
\end{align*}
We now consider the cotangent bundle $\pi:T^*X\to X$. This has an open cover $\mathcal{U}'=\lbrace \pi^{-1}(U_{ij})\rbrace$.

Given a vector bundle $\mathcal{Y}$ on $T^*X$, the restriction of $\mathcal{Y}$ to $U_{ij}'$ must be trivial.  Thus, if we choose trivializations $\tau_{ij}\colon \mathcal{O}^{\rk \mathcal{Y}}\to \mathcal{Y}$ on $U_{ij}'$, we obtain the usual cocycle description of this bundle in terms of the transition functions $\trans^{ij}_{rs}=\tau_{ij}^{-1}\circ \tau_{rs}$.  
We want to trivialize the bundles $\mathcal{L}, \mathcal{T},\mathcal{T}^{\perp}$ on each of these open sets and compute the transition functions.  

Of course, for a line bundle, this is simple: the section $\wcoor_{ij}$ defines the desired trivialization, and $\trans^{ij}_{rs}=\wcoor_{rs}/\wcoor_{ij}$.  More generally, for a power $\mathcal{L}^{m}$, we have transition functions $\trans^{ij}_{rs}=(\wcoor_{rs}/\wcoor_{ij})^m.$

For the tautological bundles, it's useful to think about the following approach: given $(x_1,\dots, x_4)\in \mathcal{E}_V$, we can test whether $(x_1,\dots, x_4)\in V$ by whether the wedge product of $(x_1,\dots, x_4)$ with a basis of $V$ is 0.  The coordinate of $e_i\wedge e_j\wedge e_r$ in this triple wedge product is $x_i\wcoor_{jr}+x_{j}\wcoor_{ri}+x_{r}\wcoor_{ij}=0.$
That is, $(x_1,\dots, x_4)\in V$ if it is perpendicular to the sections of $\mathcal{E}(1)$ defined by:
\begin{align*}
    &&A_1&=\left(\begin{array}{c}
            0 \\
         \wcoor_{34}  \\
         -\wcoor_{24} \\
         \wcoor_{23}
    \end{array}\right),& 
    A_2&=\left(\begin{array}{c}
            -\wcoor_{34} \\
          0  \\
         \wcoor_{14} \\
         -\wcoor_{13}
    \end{array}\right), & 
    A_3&=\left(\begin{array}{c}
            \wcoor_{24} \\
         -\wcoor_{14}  \\
         0 \\
         \wcoor_{12}
    \end{array}\right), & 
    A_4&=\left(\begin{array}{c}
            -\wcoor_{23} \\
         \wcoor_{13}  \\
          -\wcoor_{12} \\
         0
    \end{array}\right).&
\end{align*}
\Ben{We might want to choose the signs differently here to better match the signs we get from the diagrams later.  }
That is, the image of these sections span $\mathcal{T}^{\perp}$.  One can easily work out from the Pl\" ucker relations \eqref{plucker} that we have a similar set of maps $\mathcal{L}^{-1}\to \mathcal{T}$, defined by the vectors $B_{i}=(B_{1i},\dots, B_{4i})$ where:
\begin{align*}
    &&B_{ij}&=\begin{cases}
    \wcoor_{ij} & \text{if }i\neq j\\
    0 & \text{if }i=j.
    \end{cases}&
\end{align*}

On $U_{ij}'$, we have that $\wcoor_{ij}$ is invertible, and we can always trivialize $\mathcal{T}$ by using the vectors $B_i/\wcoor_{ij},B_{j}/\wcoor_{ij}$, and similarly for $\mathcal{T}^{\perp}$ using the vectors $A_k/\wcoor_{ij},A_{\ell}/\wcoor_{ij}$ where $\{r,s\}=\{i,j\}^c$.  
We can always compute the transition matrices using the fact that any 3 $A_i$'s or $B_i$'s are linearly dependent.  Note that while there are many pairs of $\{i,j\}$ in $\{1,2,3,4\}$, we only need the transition function for a single pair of open sets: the union $U_{ij}\cup U_{rs}$ has complement of codimension $\geq 2$, so a rational map between vector bundles defined on this open subset must extend to an isomorphism on $T^*X$.

To give an example that we will use later, let us consider $i=1,j=3,r=2,s=4$. The transition functions in this case are defined on $U_{13}'\cap U_{24}'$. On this intersection, the Pl\"ucker relation (\ref{plucker}) tells us:
\begin{align*}
    &&\wcoor_{24}A_2&=\left(\begin{array}{c}
         \wcoor_{24}\wcoor_{34} \\
          0 \\
          -\wcoor_{24}\wcoor_{14} \\
          \wcoor_{12}\wcoor_{34}+\wcoor_{14}\wcoor_{23}
    \end{array}\right)=\wcoor_{14}A_1+\wcoor_{34}A_3,&\\
    &&\wcoor_{24}A_4&=\left(\begin{array}{c}
         -\wcoor_{24}\wcoor_{23}  \\
         \wcoor_{14}\wcoor_{23}+\wcoor_{12}\wcoor_{34}\\
         -\wcoor_{24}\wcoor_{12}\\
         0
    \end{array}\right)=\wcoor_{12}A_1-\wcoor_{23}A_3,&\\
    &&\wcoor_{24}B_1&=\left(\begin{array}{c}
         0  \\
         \wcoor_{24}\wcoor_{12} \\
         \wcoor_{12}\wcoor_{34}+\wcoor_{14}\wcoor_{23} \\
         \wcoor_{24}\wcoor_{14}
    \end{array}\right)=\wcoor_{14}B_2+\wcoor_{12}B_4,&\\
    &&D_{24}B_3&=\left(\begin{array}{c}
         -\wcoor_{12}\wcoor_{34}-\wcoor_{14}\wcoor_{23}  \\
         -\wcoor_{24}\wcoor_{23} \\
         0\\
         \wcoor_{24}\wcoor_{34}
    \end{array}\right)=\wcoor_{34}B_2-\wcoor_{23}B_4.&
\end{align*}
Dividing by $\wcoor_{13}\wcoor_{24}$, we find that:
\begin{lemma}\label{transition-lemma-2}
    The transition functions $\trans^{13}_{24}$ are given by:
    \begin{enumerate}
        \item For $\mathcal{L}^{k}$, we have $\trans^{24}_{13}=\wcoor_{13}^k/\wcoor_{24}^k,\trans^{13}_{24}=\wcoor_{24}^k/\wcoor_{13}^k$.
        \item For $\mathcal{T}$, we have $\trans^{24}_{13}=\begin{bmatrix} \wcoor_{14}/\wcoor_{13}& \wcoor_{34}/\wcoor_{13}\\ \wcoor_{12}/\wcoor_{13}& -\wcoor_{23}/\wcoor_{13}\end{bmatrix},\trans^{13}_{24}=\begin{bmatrix} \wcoor_{23}/\wcoor_{24}& \wcoor_{12}/\wcoor_{24}\\ \wcoor_{34}/\wcoor_{24}& -\wcoor_{14}/\wcoor_{24}\end{bmatrix}$.
        \item For $\mathcal{T}^{\perp}$, we have $\trans^{24}_{13}=\begin{bmatrix} \wcoor_{14}/\wcoor_{13}& \wcoor_{12}/\wcoor_{13}\\ \wcoor_{34}/\wcoor_{13}& -\wcoor_{23}/\wcoor_{13}\end{bmatrix},\trans^{13}_{24}=\begin{bmatrix} \wcoor_{23}/\wcoor_{24}& \wcoor_{34}/\wcoor_{24}\\ \wcoor_{12}/\wcoor_{24}& -\wcoor_{14}/\wcoor_{24}\end{bmatrix}$.
    \end{enumerate}
\end{lemma}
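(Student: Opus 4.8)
The plan is to extract the three transition matrices directly from the identities just derived from the Pl\"ucker relation \eqref{plucker}, recalling first that it suffices to compute the transition function for the single pair of charts $(U_{13}',U_{24}')$: their union $U_{13}'\cup U_{24}'$ omits only a locus of codimension $\geq 2$ in $T^*X$, so a frame change defined there automatically extends over all of $T^*X$.

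For the powers $\mathcal{L}^k$ there is nothing to do beyond unwinding the definition: $\mathcal{L}$ is trivialized on $U_{ij}'$ by the section $\wcoor_{ij}$, whence $\trans^{ij}_{rs}=\wcoor_{rs}/\wcoor_{ij}$, and the $k$-th power gives the stated formulas. For $\mathcal{T}$, recall that we trivialize on $U_{ij}'$ by $B_i/\wcoor_{ij},\,B_j/\wcoor_{ij}$, hence by $B_1/\wcoor_{13},\,B_3/\wcoor_{13}$ on $U_{13}'$ and by $B_2/\wcoor_{24},\,B_4/\wcoor_{24}$ on $U_{24}'$. Dividing the relations $\wcoor_{24}B_1=\wcoor_{14}B_2+\wcoor_{12}B_4$ and $\wcoor_{24}B_3=\wcoor_{34}B_2-\wcoor_{23}B_4$ through by $\wcoor_{13}\wcoor_{24}$ rewrites the $U_{13}'$-frame in terms of the $U_{24}'$-frame, and the coefficients are, by definition, the columns of $\trans^{24}_{13}$. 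The matrix $\trans^{13}_{24}$ is then obtained either by running the same linear algebra in reverse, i.e.\ by solving those two relations for $B_2,B_4$ in terms of $B_1,B_3$ (using \eqref{plucker} in the form $\wcoor_{12}\wcoor_{34}+\wcoor_{14}\wcoor_{23}=\wcoor_{13}\wcoor_{24}$ to clear the cross terms), or simply by inverting the $2\times 2$ matrix $\trans^{24}_{13}$, whose determinant works out to $-\wcoor_{24}/\wcoor_{13}$ by \eqref{plucker} — consistent with $\wedgetwo\mathcal{T}\cong\mathcal{L}^{-1}$. The bundle $\mathcal{T}^{\perp}$ is handled identically, now with the frame $A_r/\wcoor_{ij},\,A_s/\wcoor_{ij}$ for $\{r,s\}=\{i,j\}^c$ — so $A_2/\wcoor_{13},\,A_4/\wcoor_{13}$ on $U_{13}'$ and $A_1/\wcoor_{24},\,A_3/\wcoor_{24}$ on $U_{24}'$ — dividing the relations $\wcoor_{24}A_2=\wcoor_{14}A_1+\wcoor_{34}A_3$ and $\wcoor_{24}A_4=\wcoor_{12}A_1-\wcoor_{23}A_3$ through by $\wcoor_{13}\wcoor_{24}$.

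There is no conceptual obstacle: the mathematical content is precisely the Pl\"ucker computation already carried out above. What needs care is only the bookkeeping — fixing the order of the framing vectors so that the rows and columns fall into the stated slots with the correct signs — together with the sanity check $\trans^{24}_{13}\trans^{13}_{24}=\mathrm{id}$, a one-line determinant computation via \eqref{plucker} that also confirms these cocycles glue to genuine vector bundles on $T^*X$.
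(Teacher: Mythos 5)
Your proposal is correct and takes essentially the same route as the paper: the paper's proof of this lemma is exactly the four displayed Pl\"ucker-relation identities expressing $A_2,A_4$ in terms of $A_1,A_3$ and $B_1,B_3$ in terms of $B_2,B_4$, divided through by $\wcoor_{13}\wcoor_{24}$, with the reverse transition functions obtained by inverting (equivalently re-solving using $\wcoor_{12}\wcoor_{34}+\wcoor_{14}\wcoor_{23}=\wcoor_{13}\wcoor_{24}$), just as you describe, together with the same codimension-$\geq 2$ remark. The only point of divergence is the row-versus-column bookkeeping you already flag (the paper's stated $\trans^{13}_{24}$ matrices are transposes of the literal inverses of its $\trans^{24}_{13}$), which is purely notational and does not affect the identification of the bundles, and your determinant check $\det \trans^{24}_{13}=-\wcoor_{24}/\wcoor_{13}$ is a correct and harmless addition.
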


In \cite[\S 7.2]{todaTiltingGenerators2010}, it's shown that 
\begin{equation}\label{eq:ext1}
     \Ext^1_{T^*X}(\mathcal{O},\mathcal{L}^{-2})\cong \bigoplus_{j=1}^{\infty} \Gamma(X;\operatorname{Sym}^{j-1}(\mathcal{T}))\cong \Gamma(T^*X;\mathcal{O})
\end{equation}  
where this decomposition gives the weight decomposition under the scaling $\C^*$. The summand for $j$ has weight $2j$; in particular, $j=1$ gives a unique line of nontrivial extensions of minimal weight which freely generate $ \Ext^1_{T^*X}(\mathcal{O},\mathcal{L}^{-2})$ as a module over $\Gamma(T^*X;\mathcal{O})$.  We'll use later that:
\begin{lemma}\label{indecomp-ext}
	There is a unique indecomposable extension $0\to \mathcal{L}^{-2} \to \exten \to \mathcal{O}\to 0$ which is equivariant for the squared scaling action, where $\mathcal{O}$ has the usual equivariant structure (where its lowest degree sections have weight 0) and $\mathcal{L}^{-2}$ is tensored with a space of weight 2 under the scaling action (where its lowest degree sections have weight 0).  That is, this extension has transition matrix
 \begin{equation}\label{eq:H-transition}
 \trans^{13}_{24}=\begin{bmatrix} \wcoor_{24}^2/\wcoor_{13}^2& f\\ 0& 1\end{bmatrix}
 \end{equation}
 where $f$ is a function of weight 2 under the squared scaling action. We'll also use the tensor product $\exten\otimes \mathcal{L}$, which has transition functions
  \begin{equation}\label{eq:HL-transition}
 \trans^{13}_{24}=\begin{bmatrix} \wcoor_{24}/\wcoor_{13}& g\\ 0& \wcoor_{13}/\wcoor_{24}\end{bmatrix}.
 \end{equation}
 The extension $\mathcal{H}$ is uniquely characterized by the property $H^1(T^*X;\mathcal{H})=0$.
 \end{lemma}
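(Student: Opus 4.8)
The plan is to extract everything from the cited computation \eqref{eq:ext1}. Write $R:=\Gamma(T^*X;\mathcal{O})$, a connected, nonnegatively graded $\C$-algebra (graded by the scaling action), so that $R^{\times}=\C^{\times}$. I would first record two facts. (i) By \eqref{eq:ext1}, $\Ext^{1}_{T^*X}(\mathcal{O},\mathcal{L}^{-2})$ is a \emph{free} $R$-module of rank one whose generators all lie in its bottom graded piece, the $j=1$ summand $\Gamma(X;\Sym^{0}\mathcal{T})\cong\C$; hence any two generators differ by a scalar, and a class $c$ generates the module if and only if it is a nonzero scalar multiple of a fixed minimal generator $e_{0}$. (ii) $H^{1}(T^*X;\mathcal{O})=0$: the affinization $T^*X\to\Spec R$ is a symplectic, hence rational, resolution, so the higher cohomology of $\mathcal{O}$ vanishes.

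For the existence and uniqueness of $\mathcal{H}$: extensions $0\to\mathcal{L}^{-2}\to\mathcal{F}\to\mathcal{O}\to 0$ are classified by $\Ext^{1}_{T^*X}(\mathcal{O},\mathcal{L}^{-2})$, and with the equivariant structures prescribed in the statement a $\C^{*}$-equivariant extension corresponds to an invariant class, i.e.\ to a class in the weight-$0$ part of the suitably twisted $\Ext^{1}$. By (i) this is the one-dimensional line $\C e_{0}$. The class $0$ gives the split, decomposable extension $\mathcal{L}^{-2}\oplus\mathcal{O}$; each class $\lambda e_{0}$ with $\lambda\in\C^{\times}$ gives an extension, and these are mutually isomorphic (rescale the sub by $\lambda$, or the quotient by $\lambda^{-1}$), so up to isomorphism there is a unique nonsplit equivariant extension; call its total space $\mathcal{H}$. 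Indecomposability of $\mathcal{H}$ I would defer: since $\operatorname{Pic}(T^*X)\cong\Z\langle\mathcal{L}\rangle$, a splitting would have the form $\mathcal{L}^{a}\oplus\mathcal{L}^{b}$ with $a+b=-2$, and every such bundle has $H^{1}\neq 0$ (by Bott's theorem applied through $\pi_{*}$, or by the analogue of \eqref{eq:ext1} for $\mathcal{L}^{m}$ with $m\leq -2$, together with $H^{1}(T^*X;\mathcal{L}^{-1})\neq 0$), which contradicts $H^{1}(T^*X;\mathcal{H})=0$ once the latter is established.

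For the transition matrices: since $X\setminus(U_{13}\cup U_{24})$ has codimension $\geq 2$, the bundle $\mathcal{H}$ is pinned down by a single transition function on $U'_{13}\cap U'_{24}$. I would pick on each of $U'_{13},U'_{24}$ a frame of $\mathcal{H}$ adapted to the filtration $\mathcal{L}^{-2}\subset\mathcal{H}$, namely the chosen frame of $\mathcal{L}^{-2}$ together with a lift of the frame $1$ of $\mathcal{O}$. Then $\trans^{13}_{24}$ is block upper triangular, with diagonal entries the transition functions of $\mathcal{L}^{-2}$ and of $\mathcal{O}$ recorded in \Cref{transition-lemma-2}, and off-diagonal entry a regular function $f$ on the overlap representing the extension class. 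Equivariance of the chosen structure forces $f$ to be homogeneous of weight $2$ for the squared scaling (matching $e_{0}$); such an $f$ exists and is unique up to coboundary, i.e.\ up to changing the lift. This is \eqref{eq:H-transition}, and tensoring by $\mathcal{L}$ multiplies the matrix by the scalar transition function of $\mathcal{L}$, yielding \eqref{eq:HL-transition} with $g$ the corresponding rescaling of $f$, still of weight $2$.

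For the cohomological characterization: for any extension $0\to\mathcal{L}^{-2}\to\mathcal{F}\to\mathcal{O}\to 0$ the long exact sequence in cohomology gives
\[
H^{0}(T^*X;\mathcal{F})\to H^{0}(T^*X;\mathcal{O})\xrightarrow{\ \delta\ }H^{1}(T^*X;\mathcal{L}^{-2})\to H^{1}(T^*X;\mathcal{F})\to H^{1}(T^*X;\mathcal{O}),
\]
and the connecting map $\delta\colon R=H^{0}(T^*X;\mathcal{O})\to H^{1}(T^*X;\mathcal{L}^{-2})=\Ext^{1}_{T^*X}(\mathcal{O},\mathcal{L}^{-2})$ is Yoneda multiplication by the class $[\mathcal{F}]$. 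By (ii) the last term vanishes, so $H^{1}(T^*X;\mathcal{F})\cong\operatorname{coker}\delta$, which is zero precisely when $[\mathcal{F}]$ generates $\Ext^{1}_{T^*X}(\mathcal{O},\mathcal{L}^{-2})$ as an $R$-module; by (i) this happens precisely when $[\mathcal{F}]\in\C^{\times}e_{0}$, i.e.\ precisely when $\mathcal{F}\cong\mathcal{H}$. Hence $\mathcal{H}$ is the unique extension of this shape with $H^{1}(T^*X;\mathcal{H})=0$, which also supplies the indecomposability deferred above. The step I expect to be the main obstacle is exactly this interface between the algebra and the geometry: identifying $\delta$ with Yoneda multiplication by $[\mathcal{F}]$ and, more substantively, knowing that \eqref{eq:ext1} is an isomorphism of \emph{$R$-modules} rather than merely of graded vector spaces, since that is what lets the condition $H^{1}=0$ pin the class down to a scalar; the remaining $\C^{*}$-weight bookkeeping and the signs in the transition functions are routine but must be tracked carefully.
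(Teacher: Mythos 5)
Your main line of argument is essentially the paper's proof, spelled out in more detail: uniqueness of the equivariant extension from the one-dimensionality of the $j=1$ (weight-$2$) piece of $\Ext^1_{T^*X}(\mathcal{O},\mathcal{L}^{-2})$, the transition matrix \eqref{eq:H-transition} from the standard upper-triangular form of an extension in frames adapted to the sub-bundle (with the codimension-$2$ remark justifying working only over $U'_{13}\cap U'_{24}$), and the characterization by $H^1(T^*X;\mathcal{H})=0$ via the long exact sequence, the identification of the boundary map with multiplication by the extension class, and the freeness of $\Ext^1_{T^*X}(\mathcal{O},\mathcal{L}^{-2})$ over $\Gamma(T^*X;\mathcal{O})$ recorded after \eqref{eq:ext1}. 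Making $H^1(T^*X;\mathcal{O})=0$ and the Yoneda interpretation of $\delta$ explicit is a reasonable amplification of what the paper uses implicitly.

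There is, however, one genuine error, localized in your deferred indecomposability argument: you assert $H^1(T^*X;\mathcal{L}^{-1})\neq 0$, and this is false. Indeed $H^1(T^*X;\mathcal{L}^{-1})=\Ext^1(\mathcal{O},\mathcal{L}^{-1})$ must vanish, since $\mathcal{O}$ and $\mathcal{L}^{-1}$ are both summands of the tilting bundle $\tilting$ (\Cref{lem:line-isos}, \Cref{th:Z-tilting}), and this vanishing is also used explicitly in the proof of \Cref{prop:TU-comparison} (the step $\mathcal{E}_1=\mathcal{O}_X\oplus\mathcal{L}^{-1}$ because $\Ext^1(\mathcal{E}_0,\mathcal{L}^{-1})=0$). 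Consequently your claim that every $\mathcal{L}^{a}\oplus\mathcal{L}^{b}$ with $a+b=-2$ has nonvanishing $H^1$ fails for $(a,b)=(-1,-1)$, and that case is not excluded by your reasoning, so indecomposability of $\mathcal{H}$ is not actually established. The gap is easy to close: if $\mathcal{H}\cong\mathcal{L}^{-1}\oplus\mathcal{L}^{-1}$, the surjection $\mathcal{H}\to\mathcal{O}$ would be given by a pair of sections of $\mathcal{L}$ on $T^*X$ with no common zero; restricting to the zero section, their images in $\Gamma(X;\mathcal{L})$ would be sections of an ample line bundle on the projective fourfold $X$ without a common zero, which is impossible (two ample divisors on $X$ meet, and the degenerate cases where a restriction vanishes identically are even worse). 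Alternatively, one can check directly that the scaling-degree-$0$ part of $\End(\mathcal{H})$ is spanned by the identity, in the same spirit as the indecomposability argument used later for the $\tilting^{\Bi}$. With that case handled, your proposal matches the paper's proof.
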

 \begin{proof}
     The uniqueness is determined by the fact that the $j=1$ component of $\Ext^1_{T^*X}(\mathcal{O},\mathcal{L}^{-2})$ is $\Gamma(X;\mathcal{O})$ which is 1-dimensional.  The description of transition functions follows by the standard form for an extension of vector bundles.

    For a given $\gamma\in  \Ext^1_{T^*X}(\mathcal{O},\mathcal{L}^{-2})$, let $\mathcal{H}_{\gamma}$ be the corresponding extension.  Then, by the usual long exact sequence, $H^1(T^*X;\mathcal{H}_{\gamma})$ is the cokernel of the boundary map $\Gamma(T^*X;\mathcal{O})\to H^1(T^*X;\mathcal{L}^{-2})$.  That is, it is trivial if and only if $\gamma$ generates $H^1(T^*X;\mathcal{L}^{-2})$ as a $\Gamma(T^*X;\mathcal{O})$-module.  Since $\Gamma(T^*X;\mathcal{O})$ has no units, this module has a unique line of such generators, which exactly gives us $\mathcal{H}_{\gamma}\cong \mathcal{H}$.
 \end{proof}
 We will determine the functions $f,g$ later, but by the uniqueness of this nontrivial extension, we do not need to determine them now.  This extension is also constructed in \cite[Lem. 3.2]{tseuCanonicalTilting2024}, since this lowest-weight line is $PGL(4)$-invariant.  This observation is needed to show that our tilting generator coincides with Tseu's.

Since $T^*\Gr(n-k,n)$ is a $GL_n$-equivariant vector bundle, we can write it as the associated bundle for a representation of $P$, which is most naturally written as the perpendicular to $\mathfrak{p}$ in the dual $ \mathfrak{g}^*$.  Using the trace pairing to identify $\mathfrak{g}^*\cong \mathfrak{g}$, this perpendicular is identified with the unipotent radical of $\mathfrak{p}$, the strictly upper triangular block matrices.  We thus have a Springer map $\sigma\colon T^*\Gr(n-k,n)\to \mathfrak{g}^*\cong \mathfrak{g}$, which we can also think of as the moment map for the usual $G$-action.  The image of this map is the closure $\bar{\mathcal{N}}(\lambda)$ of the orbit of nilpotent matrices whose Jordan type is $\lambda=(2^k,1^{n-2k})$, and this map is a symplectic resolution of singularities.  The cotangent bundle $T^*\Gr(n-k,n)$ is the only variety that appears as a symplectic resolution of $\bar{\mathcal{N}}(\mu)$, although there are two non-isomorphic symplectic resolution maps if $n>2$---the map $\sigma$ and its composition with the transposition map on $\mathfrak{g}$.

\subsection{Quiver gauge theories and cylindrical KLRW algebras}


Let $\Gamma$ be a finite 
quiver with vertices $\vertex(\Gamma)$, edges $\edge(\Gamma)$ and dimension vectors $\mathbf{v}$,$\mathbf{w}:\vertex\to\N$ with the $\mathbf{w}$ corresponding to the framing and denote the number of vertices by $n=|\vertex(\Gamma)|$. The quiver gauge theory associated to $\Gamma$ has gauge group
\begin{align*}
    &&G(\Gamma)&=\prod\GL(\C^{v_i})&
\end{align*}
and matter representation
\begin{align*}
    &&V(\Gamma)&=\left(\bigoplus_{i\to j}\Hom(\C^{v_i},\C^{v_j}) \right)\oplus\left(\bigoplus_{i\in\vertex}\Hom(\C^{w_i},\C^{v_i}) \right).&
\end{align*}
The theory of interest in this paper is determined by the data of the following framed quiver diagram:
\tikzstyle{int}=[draw, line width = 0.5mm, minimum size=3em]
\begin{center}
\begin{tikzpicture}[node distance=3cm,auto,>=latex']
    \node[int, circle] (a) [] {$1$};

    \node [above of=a, node distance=1cm] {$\mathbf{1}$};

    \node [int,circle] (b) [right of=a, node distance=2cm] {$2$};

     \node [above of=b, node distance=1cm] {$\mathbf{2}$};

     \node [int,circle] (c) [right of=b, node distance=2cm] {$1$};

      \node [above of=c, node distance=1cm] {$\mathbf{3}$};

    \node [int,red] (d) [below of=b, node distance=2cm] {$2$};

    \draw[->,ultra thick,black] (a) edge node {} (b);

    \draw[->,ultra thick,black] (b) edge node {} (c);

    \draw[->,ultra thick,red] (b) edge node {} (d);
  
\end{tikzpicture}
\end{center}
It was shown in \cite[Th. E]{WebcohII} that the Coulomb branch of the corresponding $3d$ $\mathcal{N}=4$ gauge theory is given by a cylindrical KLRW algebra defined by the quiver data. These diagrammatic algebras are generated by string diagrams on the cylinder $\R/\Z\times[0,1]$ with product given by vertical composition. We can define these diagrams from the quiver data more precisely as follows.

Throughout the rest of this paper, a {\bf strand} will mean a  curve in $ \R/\Z\times [0,1]$ 
of the form
  $\{(\bar{\pi}(t),t)\mid t\in [0,1]\}$ for some path $\bar{\pi}\colon
  [0,1]\to \R/\Z$.  
  
\begin{definition} An \textbf{(unflavored) cylindrical KLRW diagram} associated to the framed quiver $\Gamma$ with dimension vectors $\mathbf{v}$ and $\mathbf{w}$ is a diagram on the cylinder $\R/\Z\times[0,1]$ consisting of
strands beginning at the bottom edge $\R/\Z\times 0$ and terminating at the top edge $\R/\Z\times 1$.
For each $i$, there will be an unordered $v_i$-tuple of strands, which we call {\bf black} and draw in that color, and an unordered $w_i$-tuple of strands, which we call {\bf red} and draw with that color. We can also add dots at arbitrary positions on black strands (but not on red) that avoid crossings and the lines $y=0$ and $y=1$.  In a cylindrical diagram, we require the red lines to be vertical (the corresponding $\bar{\pi}(t)$ is constant)\footnote{We'll consider diagrams where these red strands cross later.}.

These must satisfy the usual genericity property of avoiding tangencies and triple points between any set of strands, as well as strands meeting at $y=0$ or $y=1$.  We identify any diagrams that differ by isotopies that preserve these genericity conditions.  For simplicity of drawing, we'll also assume genericity with respect to the line $x=0$.  
\end{definition}

    Let us now describe our conventions for drawing cylindrical diagrams.  
We'll draw these on the page in the rectangle $[0,1]\times
[0,1]$ with seams on the left and right sides of the diagram where we
should glue to obtain the cylindrical diagram. 

Note that the order in which strands meet the circles $y=0$ and and $y=1$ induce two distinct cyclic orders on the set of strands, which we'll call the bottom and top orders.  By reading the labels of the strands, red or black, in a positive direction starting at $x=0$ (left to right in the convention discussed above), we obtain two words whose entries are $v_i$ instances of $i$ which are black and $w_i$ which are red, ranging over $i\in \vertex(\Gamma)$.  We'll call these words the {\bf bottom} and {\bf top} of our diagram.

Note that the top and bottom of a KLRW diagram will have the same order on red strands, so we will typically take this order as fixed, and only consider diagrams with this order.





\begin{definition}\label{def:cKLRW}  The {\bf cylindrical (unflavored) KLRW algebra} $ {\mathring{R}}$ attached to
  the data $\Gamma,\Bv,\Bw$  and an order on the red strands is the quotient of
  the formal span over a commutative ring $\Bbbk$ of cylindrical KLRW 
  diagrams for these data by the local relations below:
  \newseq
\begin{equation*}\subeqn\label{first-QH}
    \begin{tikzpicture}[scale=.7,baseline]
      \draw[very thick](-4,0) +(-1,-1) -- +(1,1) node[below,at start]
      {$i$}; \draw[very thick](-4,0) +(1,-1) -- +(-1,1) node[below,at
      start] {$j$}; \fill (-4.5,.5) circle (4pt);
      \node at (-2,0){=}; \draw[very thick](0,0) +(-1,-1) -- +(1,1)
      node[below,at start] {$i$}; \draw[very thick](0,0) +(1,-1) --
      +(-1,1) node[below,at start] {$j$}; \fill (.5,-.5) circle (4pt);
    \end{tikzpicture}\hspace{1.5cm}
    \begin{tikzpicture}[scale=.7,baseline]
      \draw[very thick](-4,0) +(-1,-1) -- +(1,1) node[below,at start]
      {$i$}; \draw[very thick](-4,0) +(1,-1) -- +(-1,1) node[below,at
      start] {$j$}; \fill (-3.5,.5) circle (4pt);
      \node at (-2,0){=}; \draw[very thick](0,0) +(-1,-1) -- +(1,1)
      node[below,at start] {$i$}; \draw[very thick](0,0) +(1,-1) --
      +(-1,1) node[below,at start] {$j$}; \fill (-.5,-.5) circle (4pt);
      \node at (3.8,0){unless $i=j$};
    \end{tikzpicture}
  \end{equation*}
\begin{equation*}\subeqn\label{nilHecke-1}
    \begin{tikzpicture}[scale=.7,baseline]
      \draw[very thick](-4,0) +(-1,-1) -- +(1,1) node[below,at start]
      {$i$}; \draw[very thick](-4,0) +(1,-1) -- +(-1,1) node[below,at
      start] {$i$}; \fill (-4.5,.5) circle (4pt);
      \node at (-2,0){$-$}; \draw[very thick](0,0) +(-1,-1) -- +(1,1)
      node[below,at start] {$i$}; \draw[very thick](0,0) +(1,-1) --
      +(-1,1) node[below,at start] {$i$}; \fill (.5,-.5) circle (4pt);
      \node at (2,0){$=$}; 
    \end{tikzpicture}
    \begin{tikzpicture}[scale=.7,baseline]
      \draw[very thick](-4,0) +(-1,-1) -- +(1,1) node[below,at start]
      {$i$}; \draw[very thick](-4,0) +(1,-1) -- +(-1,1) node[below,at
      start] {$i$}; \fill (-4.5,-.5) circle (4pt);
      \node at (-2,0){$-$}; \draw[very thick](0,0) +(-1,-1) -- +(1,1)
      node[below,at start] {$i$}; \draw[very thick](0,0) +(1,-1) --
      +(-1,1) node[below,at start] {$i$}; \fill (.5,.5) circle (4pt);
      \node at (2,0){$=$}; \draw[very thick](4,0) +(-1,-1) -- +(-1,1)
      node[below,at start] {$i$}; \draw[very thick](4,0) +(0,-1) --
      +(0,1) node[below,at start] {$i$};
    \end{tikzpicture}
  \end{equation*}
  \begin{equation*}\subeqn\label{black-bigon}
    \begin{tikzpicture}[very thick,scale=.8,baseline]

      \draw (-2.8,0) +(0,-1) .. controls (-1.2,0) ..  +(0,1)
      node[below,at start]{$i$}; \draw (-1.2,0) +(0,-1) .. controls
      (-2.8,0) ..  +(0,1) node[below,at start]{$j$}; 
   \end{tikzpicture}=\quad
   \begin{cases}
0 & i=j\\
   \begin{tikzpicture}[very thick,scale=.6,baseline=-3pt]
       \draw (2,0) +(0,-1) -- +(0,1) node[below,at start]{$j$};
       \draw (1,0) +(0,-1) -- +(0,1) node[below,at start]{$i$};\fill (2,0) circle (4pt);
     \end{tikzpicture}-\begin{tikzpicture}[very thick,scale=.6,baseline=-3pt]
       \draw (2,0) +(0,-1) -- +(0,1) node[below,at start]{$j$};
       \draw (1,0) +(0,-1) -- +(0,1) node[below,at start]{$i$};\fill (1,0) circle (4pt);
     \end{tikzpicture}& i\to j\\
  \begin{tikzpicture}[very thick,baseline=-3pt,scale=.6]
       \draw (2,0) +(0,-1) -- +(0,1) node[below,at start]{$j$};
       \draw (1,0) +(0,-1) -- +(0,1) node[below,at start]{$i$};\fill (1,0) circle (4pt);
     \end{tikzpicture}-\begin{tikzpicture}[very thick,scale=.6,baseline=-3pt]
       \draw (2,0) +(0,-1) -- +(0,1) node[below,at start]{$j$};
       \draw (1,0) +(0,-1) -- +(0,1) node[below,at start]{$i$};\fill (2,0) circle (4pt);
     \end{tikzpicture}& i\leftarrow j\\     \begin{tikzpicture}[very thick,scale=.6,baseline=-3pt]
       \draw (2,0) +(0,-1) -- +(0,1) node[below,at start]{$j$};
       \draw (1,0) +(0,-1) -- +(0,1) node[below,at start]{$i$};
     \end{tikzpicture} & \text{otherwise}\\
   \end{cases}
  \end{equation*}
 \begin{equation*}\subeqn\label{triple-dumb}
    \begin{tikzpicture}[very thick,scale=.8,baseline=-3pt]
      \draw (-2,0) +(1,-1) -- +(-1,1) node[below,at start]{$k$}; \draw
      (-2,0) +(-1,-1) -- +(1,1) node[below,at start]{$i$}; \draw
      (-2,0) +(0,-1) .. controls (-3,0) ..  +(0,1) node[below,at
      start]{$j$}; \node at (-.5,0) {$-$}; \draw (1,0) +(1,-1) -- +(-1,1)
      node[below,at start]{$k$}; \draw (1,0) +(-1,-1) -- +(1,1)
      node[below,at start]{$i$}; \draw (1,0) +(0,-1) .. controls
      (2,0) ..  +(0,1) node[below,at start]{$j$}; \end{tikzpicture}=\quad
      \begin{cases} 
    \begin{tikzpicture}[very thick,scale=.6,baseline=-3pt]
     \draw (6.2,0)
      +(1,-1) -- +(1,1) node[below,at start]{$k$}; \draw (6.2,0)
      +(-1,-1) -- +(-1,1) node[below,at start]{$i$}; \draw (6.2,0)
      +(0,-1) -- +(0,1) node[below,at
      start]{$j$};     \end{tikzpicture}& j\leftarrow i=k\\
    -\begin{tikzpicture}[very thick,scale=.6,baseline]
     \draw (6.2,0)
      +(1,-1) -- +(1,1) node[below,at start]{$k$}; \draw (6.2,0)
      +(-1,-1) -- +(-1,1) node[below,at start]{$i$}; \draw (6.2,0)
      +(0,-1) -- +(0,1) node[below,at
      start]{$j$};     \end{tikzpicture}& j\to i=k\\
      0 & \text{otherwise}
      \end{cases}
  \end{equation*}
      \begin{equation*}\label{cost}\subeqn
      \begin{tikzpicture}[very thick,scale=.8,baseline]
        \draw (-2.8,0) +(0,-1) .. controls (-1.2,0) ..  +(0,1)
        node[below,at start]{$i$}; \draw[wei] (-1.2,0) +(0,-1)
        .. controls (-2.8,0) .. node[below, at start]{$j$} +(0,1);
        \end{tikzpicture}=
\begin{cases}
     \begin{tikzpicture}[very thick,scale=.8,baseline]
      \draw[wei] (2.8,0) +(0,-1) -- node[below, at start]{$j$} +(0,1); \draw (1.2,0) +(0,-1) -- +(0,1)
        node[below,at start]{$i$}; \fill (1.2,0) circle (3pt);
        \end{tikzpicture}& i=j\\
         \begin{tikzpicture}[very thick,scale=.8,baseline]
      \draw[wei] (2.8,0) +(0,-1) -- node[below, at start]{$j$} +(0,1); \draw (1.2,0) +(0,-1) -- +(0,1)
        node[below,at start]{$i$};
        \end{tikzpicture}& i\neq j
\end{cases}     
\hspace{2cm}
      \begin{tikzpicture}[very thick,scale=.8,baseline]
        \draw[wei] (-2.8,0) +(0,-1) .. controls (-1.2,0) ..  node[below, at start]{$i$} +(0,1); \draw (-1.2,0) +(0,-1)
        .. controls (-2.8,0) ..  +(0,1) node[below,at start]{$j$};  \end{tikzpicture}=\begin{cases}
                  \begin{tikzpicture}[very thick,scale=.8,baseline]
         \draw(2.8,0) +(0,-1) -- +(0,1) node[below,at start]{$j$};
        \draw[wei] (1.2,0) +(0,-1) -- node[below, at start]{$i$} +(0,1); \fill (2.8,0) circle (3pt);
      \end{tikzpicture}& i=j\\\\
            \begin{tikzpicture}[very thick,scale=.8,baseline]
         \draw(2.8,0) +(0,-1) -- +(0,1) node[below,at start]{$j$};
        \draw[wei] (1.2,0) +(0,-1) -- node[below, at start]{$i$} +(0,1);
      \end{tikzpicture}& i\neq j
        \end{cases}
    \end{equation*}
    \begin{equation*}\label{red-triple}\subeqn
      \begin{tikzpicture}[very thick,baseline=-2pt,scale=.8]
        \draw [wei] (0,-1) -- node[below, at start]{$k$}(0,1);
        \draw(.5,-1) to[out=90,in=-30] node[below,at start]{$i$}
        (-.5,1); \draw(-.5,-1) to[out=30,in=-90] node[below,at
        start]{$j$} (.5,1);
      \end{tikzpicture}- \begin{tikzpicture}[very thick,baseline=-2pt,scale=.8]
        \draw [wei] (0,-1) -- node[below, at start]{$k$}(0,1);
        \draw(.5,-1) to[out=150,in=-90] node[below,at start]{$i$}
        (-.5,1); \draw(-.5,-1) to[out=90,in=-150] node[below,at
        start]{$j$} (.5,1);
      \end{tikzpicture}
      =\begin{cases}
          \begin{tikzpicture}[very thick,baseline=-2pt,scale=.8]
        \draw [wei]  (0,-1) -- node[below, at start]{$k$}(0,1);
        \draw(.5,-1) to[out=90,in=-90]  node[below,at start]{$i$}(.5,1);
        \draw(-.5,-1) to[out=90,in=-90] node[below,at start]{$j$} (-.5,1);
      \end{tikzpicture} & i=j=k\\
      0 & \text{otherwise}
      \end{cases} 
    \end{equation*}
  For all other triple points, we set the two sides of the isotopy
through it equal.
\end{definition}
Given a word in red and black copies of the symbols $\vertex(\Gamma)$, we can define a corresponding idempotent by having straight vertical strands on the cylinder, with no crossings or dots, where the strands we encounter are labeled by the vertices in the word, with the corresponding color.

Note that the relations above are homogeneous under the {\bf scaling grading} defined by:
\begin{equation}
    \label{eq:KLRW-grading} \deg    \begin{tikzpicture}[very thick,scale=.6,baseline=-3pt]
       \draw (2,0) +(0,-.7) -- +(0,.7) node[below,at start]{$j$};\fill (2,0) circle (4pt);
     \end{tikzpicture}=2\qquad\qquad \deg    \begin{tikzpicture}[very thick,scale=.6,baseline=-3pt]
       \draw (2,0) +(.7,-.7) -- +(-.7,.7) node[below,at start]{$j$}; \draw (2,0) +(-.7,-.7) -- +(.7,.7) node[below,at start]{$i$};
     \end{tikzpicture}=\begin{cases}
     	-2 & i=j\\
     	1 & i\leftrightarrow j\\
     	0 & i\not\leftrightarrow j
     \end{cases}
\end{equation}
\begin{equation}
    \label{eq:KLRW-grading2} 
   \deg    \begin{tikzpicture}[very thick,scale=.6,baseline=-3pt]
       \draw[wei] (2,0) +(.7,-.7) -- +(-.7,.7) node[below,at start]{$j$}; \draw (2,0) +(-.7,-.7) -- +(.7,.7) node[below,at start]{$i$};
     \end{tikzpicture}=     \deg    \begin{tikzpicture}[very thick,scale=.6,baseline=-3pt]
       \draw (2,0) +(.7,-.7) -- +(-.7,.7) node[below,at start]{$j$}; \draw [wei] (2,0) +(-.7,-.7) -- +(.7,.7) node[below,at start]{$i$};
     \end{tikzpicture}=\begin{cases}
     	1 & i=j\\
     	0 & i\neq j\\
     \end{cases}\qquad \qquad   \deg    \begin{tikzpicture}[very thick,scale=.6,baseline=-3pt]
       \draw[wei] (2,0) +(.7,-.7) -- +(-.7,.7) node[below,at start]{$j$}; \draw[wei] (2,0) +(-.7,-.7) -- +(.7,.7) node[below,at start]{$i$};
     \end{tikzpicture}= -k
\end{equation}
We also have a {\bf winding grading} by the group $\Z^{\Gamma}$ whose $i$-component counts with sign the number of times that all strands with label $i$ cross the vertical line $x=0$;  this is left unchanged by the relations \crefrange{first-QH}{red-triple} (though it can change which strand a crossing sits on) and isotopy can only cancel crossings with opposite sign.

When there are $k$ consecutive strands with the same label, then crossings between these strands and the dots on them form a copy of the nilHecke algebra of rank $k$.  It is well known that this algebra is isomorphic to a rank $k!$ matrix algebra over symmetric polynomials in the dots (see, for example, \cite[Prop. 3.5]{laudaCategorificationQuantum2010}) and so any object in an abelian (or more generally Karoubian) category with an action of this algebra is the direct sum of $k!$ isomorphic objects.  We can make this object easier to access by replacing the cylindrical KLRW algebra with the Morita equivalent algebra where we add this divided power formally.  

A graphical rule for doing this is already given in \cite{khovanovExtendedGraphical2012}. We now allow strands with the same label to meet at the top and bottom of diagrams. These should be generic in the sense that at these meeting points, all the strands which meet should have different slopes, so there is a clear ordering on the strands as they leave the meeting point.  When we compose diagrams with such meetings, the composition will be taken to be zero unless the labels and groupings of strands match---if $k$ strands meet at the bottom of one diagram, $k$ strands with the same label must meet at the top of the other.

We need a rule for interpreting a composition where we join the bottom of one diagram and the top of another where $k$ points with the same label meet.  This is done using the convention of \cite[(2.69)]{khovanovExtendedGraphical2012}: if we join $k$ strands and split them, then we can replace this join and split with the diagram $D_k$ where we perform a half-twist on the strands which joined.  The only case that we will use in this paper is that joining and then splitting two strands can be interpreted as a crossing of the strands.  
\[ \tikz[very thick,baseline]{ \draw (1,1) to[in=90,out=-135] (0,-1);
            \draw (-1,1) to[in=90,out=-45] (0,-1);
          }\cdot \tikz[very thick,baseline]{
            \draw (1,-1) to[in=-90,out=135] (0,1);
            \draw (-1,-1) to[in=-90,out=45] (0,1);
          }=\tikz[very thick,baseline]{
            \draw (-1,1)-- (1,-1);
            \draw (-1,-1)-- (1,1);
          }\]

We also add in idempotents with vertical strands, where we allow each strand to have a thickness $>1$;  describing these as a word, we write $i^{(k)}$ for a strand with label $i$ and thickness $k$.  
As in \cite[(2.63)]{khovanovExtendedGraphical2012}, we can put symmetric functions in an alphabet of $k$ dots on strands of thickness $k$; these can be constructed from dots on thickness 1 strands by \cite[(2.72)]{khovanovExtendedGraphical2012}.

Now, let $e$ be any idempotent where for every label $i$, all $v_i$ black strands with this label are joined.  For example, we can take $e(\Bi)$ for the word $\Bi=\textcolor{red}{1^{w_1}\cdots n^{w_n}}1^{(v_1)}2^{(v_2)}3^{(v_3)}\cdots (n-1)^{(v_{n-1})} $.  
We let $\bullet_i\in e\mathring{R}e$ denote the idempotent $e$ with the elementary symmetric function of degree 1 applied to the single thick strand with label $i$ (and no other dots added).

\begin{theorem}[\mbox{\cite[Th. 6.15]{WebcohII}}]
    The subalgebra $e\mathring{R}e$ is isomorphic to the Coulomb branch algebra for the quiver gauge theory defined by $\Gamma, \Bv,\Bw$.  In particular, this subring is abelian and $\fM=\operatorname{Spec}(e\mathring{R}e)$ is the BFN Coulomb branch of this theory.  
\end{theorem}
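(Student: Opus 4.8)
Because the Coulomb branch algebra $\mathcal{A}=\C[\fM]$ is commutative by its BFN construction and $\fM$ is the BFN Coulomb branch by definition, the entire content of the theorem is the ring isomorphism $e\mathring{R}e\cong\mathcal{A}$; the ``in particular'' clause follows automatically. The plan is to prove this isomorphism by localizing both sides until they become completely explicit. On the Coulomb side I would use the GKLO-type presentation of Braverman--Finkelberg--Nakajima and Kodera--Nakajima: after inverting the equivariant parameters, $\mathcal{A}$ embeds into the abelianized algebra $\widetilde{\mathcal{A}}$, which as a $\C[\mathfrak{t}]^{\mathrm{loc}}$-module is $\bigoplus_{\lambda\in Y}\C[\mathfrak{t}]^{\mathrm{loc}}$ indexed by the cocharacter lattice $Y=\bigoplus_i\Z^{v_i}$ of $G(\Gamma)$, on which the Chern roots $x_{i,1},\dots,x_{i,v_i}$ of node $i$ act by multiplication and the monopole operators act by shifts of these roots twisted by an explicit rational function assembled from the matter factors of $V(\Gamma)$ and the framing $\Bw$. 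On the KLRW side I would use the analogous ``weighted nilHecke'' polynomial representation: a sum of polynomial rings in the dots indexed by loadings (words of vertex labels together with transverse positions on the cylinder), with dots acting by multiplication and crossings by Demazure operators weighted by the same matter/framing data. Both representations are faithful --- the first by the BFN localization theorem, the second by the standard KLR-type faithfulness argument adapted to the cylinder.

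The first concrete step is to write down the homomorphism $\Phi$ from the thickened (divided-power) cylindrical KLRW algebra into a matrix algebra over $\mathcal{A}$, using the nilHecke--matrix-algebra Morita equivalence recalled just before the theorem, and then to restrict to $e\mathring{R}e$. The dictionary is the expected one: the thick idempotent $e(\Bi)$ maps to the summand of $\widetilde{\mathcal{A}}$ cut out by the corresponding $W$-coset of Chern roots; a dot on a black $i$-strand maps to multiplication by a Chern root at node $i$, so that symmetric functions of the dots (in particular $\bullet_i$) go to symmetric functions of the $x_{i,\bullet}$; crossings of equal-label black strands act by the nilHecke/Demazure operators on those roots; a crossing of a black $i$-strand past a black $j$-strand contributes the matter factor of the edge joining $i$ and $j$, with the sign dictated by the direction of the arrow; a black $i$-strand winding once around the cylinder is a minimal monopole operator of node $i$, so the winding grading of $\mathring{R}$ matches the $Y$-grading of $\mathcal{A}$; a black strand crossing a (vertical) red strand labelled $j$ contributes the framing factor $(x - \text{mass})$; and the scaling grading of \eqref{eq:KLRW-grading}--\eqref{eq:KLRW-grading2} matches the cohomological grading on $\mathcal{A}$.

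Next I would check that $\Phi$ kills the defining relations, which is the same as checking that it intertwines the two polynomial representations. The nilHecke relations \eqref{first-QH}--\eqref{nilHecke-1} and the ``dumb'' triple-point relations \eqref{triple-dumb} are formal once the dictionary is fixed. The black bigon relation \eqref{black-bigon}, with its dependence on the orientation of the edge, is exactly the rule for pulling an $i$-strand past a $j$-strand past the matter factor of the edge; the ``cost'' relations \eqref{cost} are the same statement for black strands crossing red framing strands; and the red triple relation \eqref{red-triple} encodes the commutation of monopole operators through a framed node. Each of these reduces, after localization, to an identity between shift operators and rational functions. This step is where the real work --- and the main obstacle --- lies: one must pin down the normalization of the minimal monopole operators and the precise matter/framing rational factors so that \emph{all six} families of relations, together with every sign, hold simultaneously. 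Choosing the signs in the maps $A_i,B_i$ of \Cref{sec:TGr} to match the diagrammatics (as flagged in the note following their definition) is exactly the bookkeeping that makes this go through cleanly.

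Finally I would promote $\Phi\colon e\mathring{R}e\to\mathcal{A}$ to an isomorphism. Injectivity is immediate from the previous step: $\Phi$ intertwines the KLRW polynomial representation --- which is faithful already on the truncation $e\mathring{R}e$, since faithfulness on $\mathring{R}$ transfers to $e\mathring{R}e$ acting on $e\cdot(\text{poly rep})$ --- with the faithful BFN localization representation, so $\ker\Phi$ acts by zero and hence vanishes; alternatively one compares bases over the common subring $\C[\mathfrak{t}]^{W}=\bigotimes_i\C[x_{i,1},\dots,x_{i,v_i}]^{S_{v_i}}$ with $W=\prod_iS_{v_i}$, which is the image of the symmetric functions in the dots on the thick strands and over which both algebras are free. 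For surjectivity I would note that $\mathcal{A}$ is generated over $\C[\mathfrak{t}]^{W}$ by the minimal monopole operators of the nodes and their dressings by symmetric functions, all of which lie in the image of $\Phi$ by the winding-strand part of the dictionary. Combining the two gives $e\mathring{R}e\cong\mathcal{A}$, whence $e\mathring{R}e$ is abelian and $\fM=\Spec(e\mathring{R}e)$ is the BFN Coulomb branch. (The flavored and quantized versions can be run in parallel and specialized, but only the unflavored classical statement is needed here.)
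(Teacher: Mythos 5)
This statement is not proved in the paper at all: it is imported verbatim from \cite[Th.\ 6.15]{WebcohII}, so there is no internal argument to compare your proposal against. That said, your outline is essentially the strategy of the cited reference: realize both $e\mathring{R}e$ and the BFN algebra $\mathcal{A}$ inside localized/abelianized polynomial representations, send thick idempotents to $W$-coset summands, dots to Chern roots, winding strands to dressed minimal monopole operators, and conclude injectivity from faithfulness and surjectivity from a generation statement.

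As a standalone proof, however, your write-up has genuine gaps rather than complete steps. First, the well-definedness of $\Phi$ --- i.e.\ that the six relation families (\ref{first-QH})--(\ref{red-triple}) are intertwined with the shift-operator identities, with the correct matter/framing factors and signs --- is exactly the hard content, and you explicitly defer it; without fixing the normalization of the monopole operators and verifying the bigon and triple-point relations (especially the orientation-dependent ones) the map is not yet known to exist. Second, faithfulness of the polynomial representation of the \emph{cylindrical} KLRW algebra is not a routine transplant of the standard KLR argument: the cylinder introduces winding, and it is the presence and positions of the red strands that rescue faithfulness, so this needs a real proof or a precise citation, not the phrase ``adapted to the cylinder.'' Third, surjectivity rests on the statement that $\mathcal{A}$ is generated over $\C[\mathfrak{t}]^{W}$ by dressed minimal monopole operators; for quiver gauge theories this is a nontrivial theorem (due to Weekes) and must be invoked explicitly, since without it the image of $\Phi$ could a priori be a proper subalgebra. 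With those three points supplied (or cited), your argument would indeed reprove the quoted theorem along the same lines as \cite{WebcohII}.
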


\rm
In this paper, we will only ever consider the vectors $\Bw$ of ``level two'' where there are two red strands in total.  Thus, we can distinguish these as the ``left'' and ``right'' red strands.  
\Ben{I've now removed any reference to ``first'' and ``second'' red strands.}
Let us specialize to this case from now on, to simplify our discussion.  In order to discuss line bundles on the Grassmannian, we will need to consider twisted KLRW diagrams as well. 
\begin{definition}
    A $k$-twisted (cylindrical) KLRW diagram for $k\in \Z$ is a diagram that satisfies all the same local rules as a KLRW diagram, except that instead of being vertical, the left red strand wraps around the cylinder $k$ times (that is, if $k<0$, it wraps $|k|$ times in the negative direction), while the right strand remains vertical.  We assume that the red strands never make a bigon, that is, there are $|k|$ intersection points of the red strands, all with the same sign, and that the top and bottom of the left red strand is the same point of $S^1$.

    The quotient of the formal span of the $k$-twisted diagrams by the relations (\ref{first-QH}--\ref{red-triple}) gives a bimodule $\mathring{T}^k$ over $\mathring{R}$, and the sum $\mathbf{R}=\oplus_{k\geq 0}\mathring{T}^k$ forms a graded ring with composition given by stacking.
\end{definition}

To avoid confusion with other gradings, we say that elements of $\mathring{T}^k $ have {\bf twist degree} $k$. Of course, since $\mathring{R}$ is the twist degree 0 subalgebra of $\mathbf{R}$, this means that we can think of $e$ as an element of $\mathbf{R}$. 

\begin{examplex} The following is an example of a diagram in $e\mathring{T}^1$:
\begin{align*}
    \begin{array}{c}  
 \tikz[very thick,xscale=2,yscale=2]{
            \draw (-1,-1)-- (1,-1);
            \draw (-1,1)-- (1,1);
          \draw[dashed] (-1,-1)-- (-1,1);
           \draw[red] (-.8,-1) .. controls (-.8,-1) and (-.8,-.25) ..node[below, at start]{$r_L$} (1,.5);
            \draw (-.48,-1) .. controls (-.48,-1) and (0,0) ..node[below, at start]{$2$} (0,1);
            \draw (-0.16,-1) .. controls (-.16,-1) and (-.16,-0.5) ..node[below, at start]{$1$} 
          (1,0);
          \draw[red] (.48 ,-1)--node[below, at start]{$r_R$} (.48,1);
           \draw (.64,-1) .. controls (.64,-1) and (.64,-.75) ..node[below, at start]{$2$} (1,-.5);
           \draw (.8,-1).. controls (.8,-1) and (.48,-1) .. node[below, at start]{$3$}node[above, at end]{$3$} (.16,1);
          \draw[red] (-1,0.5) .. controls (-1,0.5) and (-0.8,0.75) .. (-0.8,1);
          \draw (-1,0) .. controls (-1,0) and (-.16,0.5) ..node[above, at end]{$1$} (-.16,1);
          \draw (-1,-.5) .. controls (-1,-.5) and (0,0.25) ..node[above, at end]{$2$} (0,1);
          \draw[dashed] (1,1)-- (1,-1);
          }
   \end{array}&
\end{align*}
\end{examplex}

\begin{theorem}[\mbox{\cite[Th. 6.19]{WebcohII}}]
    The ring $\coul=e\mathbf{R}e$ is commutative and $\tilde{\fM}=\operatorname{Proj}(\coul)$ is a symplectic resolution of $\operatorname{Spec}(e\mathring{R}e)$.  
\end{theorem}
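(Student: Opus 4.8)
The plan is to bootstrap off the identification of the twist-degree-zero subalgebra $\coul_0=e\mathring Re$ with the BFN Coulomb branch algebra (\cite[Th.~6.15]{WebcohII}), which is already known to be commutative with $\fM=\Spec(\coul_0)$ the Coulomb branch, and then to treat the full ring $\coul=\bigoplus_{k\ge 0}e\mathring T^ke$ as a graded $\coul_0$-algebra whose $\Proj$ should recover one of the GIT symplectic resolutions of $\fM$.

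First I would establish commutativity of $\coul$. Since $\coul$ is graded by twist degree and its degree-zero part is commutative, it suffices to show the pairing $e\mathring T^ke\times e\mathring T^\ell e\to e\mathring T^{k+\ell}e$ is symmetric. I would reduce each graded piece to a manageable form: writing $\theta_k\in e\mathring T^ke$ for the dotless ``minimally wound'' $k$-twisted diagram (left red strand wrapping $k$ times, no black crossings), the straightening relations \eqref{cost} and \eqref{red-triple} let one push any black crossing or dot sitting between consecutive windings to one side, so that $e\mathring T^ke=e\mathring Re\cdot\theta_k+(\text{terms of lower winding complexity})$; iterating yields a spanning set of $\coul$ by elements $\theta_k\cdot z$ with $z\in\coul_0$, indexed by twist degree together with dot data on the thick black strands. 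One then checks that, with respect to the induced filtration, the multiplication has leading term computed inside the commutative ring $\coul_0$ and symmetric in its two arguments, while the lower-order correction terms are controlled using \eqref{nilHecke-1} and the winding grading (which is preserved by every relation). Since $\coul$ is torsion-free over the domain $\coul_0$, symmetry of the associated-graded multiplication forces $\coul$ itself to be commutative. (A more geometric alternative is to identify $e\mathring T^ke$ with the global sections of the $k$-th power of the determinant line bundle of the relevant $\GL$-factor on a partial resolution, where commutativity is automatic; I would use whichever route the machinery of \cite{WebcohII} makes cleaner.)

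Next I would show $\tilde\fM=\Proj(\coul)$ is a symplectic resolution of $\fM$, in three parts. (i) The structure morphism $\tilde\fM\to\fM$ is projective because $\coul$ is a finitely generated $\coul_0$-algebra (a finiteness statement for these diagrammatic rings), and it is an isomorphism over the smooth locus of $\fM$, where the ample bundle $\Oc(1)$ is trivial --- hence it is birational. (ii) For smoothness, cover $\tilde\fM$ by the affine opens on which some twisted section is invertible and identify each localisation with a polynomial ring; the explicit diagrammatic bases of the $e\mathring T^ke$ make these charts computable, and for the type $A_3$ quiver at hand the Coulomb branch is the nilpotent orbit closure $\bar{\mathcal{N}}(2,2)$, whose symplectic resolution is exactly $T^*\Gr(2,4)$, so the charts reproduce the standard ones coming from the flag variety. (iii) For the symplectic form, note $\fM$ is a conical symplectic singularity (e.g.\ because the gauge theory is good), so the holomorphic symplectic form on its smooth locus pulls back to $\tilde\fM$; since the resolution is crepant --- checked from the fibre dimensions, or via the uniqueness statement recalled in \Cref{sec:TGr}, which forces $\tilde\fM\cong T^*\Gr(2,4)$ with $\tilde\fM\to\fM$ the Springer map --- the form extends to a genuine symplectic form on $\tilde\fM$.

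The step I expect to be the main obstacle is commutativity. The winding red strand genuinely entangles the thick black strands, so controlling the modules $e\mathring T^ke$ and showing that stacking diagrams in the two orders yields literally the same element --- not merely the same modulo lower-order terms --- requires care with the straightening relations and the winding grading. Once commutativity is secured, the identification of $\Proj(\coul)$ as a symplectic resolution is comparatively routine geometry, and in this low-rank case can be cross-checked directly against the explicit description of $T^*\Gr(2,4)$ in \Cref{sec:TGr}.
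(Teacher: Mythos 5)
First, a point of context: the paper does not prove this statement at all---it is imported verbatim as \cite[Th.~6.19]{WebcohII}, so the proof you should be measured against is the one in that reference, which realizes $\coul=e\mathbf{R}e$ inside an extended BFN convolution algebra in equivariant Borel--Moore homology (the twisted pieces $e\mathring{T}^ke$ corresponding to flavor twists/line bundles), where commutativity follows from the embedding into a localized, manifestly commutative algebra, and $\Proj(\coul)$ is identified with the GIT/flavor resolution of the Coulomb branch. Your sketch takes a different, diagrammatic route, and it has two genuine gaps.

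The first gap is the commutativity step, which you yourself flag as the crux but do not close: symmetry of the multiplication on an associated graded, even combined with torsion-freeness over the commutative degree-zero part $e\mathring{R}e$, does not force commutativity of the algebra itself---the Weyl algebra and $U(\mathfrak{g})$ are torsion-free over their centers with commutative (even polynomial) associated graded rings. The commutator of two twisted elements could perfectly well be a nonzero term of lower filtration degree, and nothing in your straightening argument rules this out; indeed the deformed algebras $A_{\hbar}^{b_L,b_R}$ in this very paper show that the same diagrammatic relations, perturbed slightly, produce noncommutative algebras, so commutativity cannot be a formal consequence of the filtration structure. The second gap is circularity in the geometric half: you deduce smoothness and crepancy of $\Proj(\coul)\to\Spec(e\mathring{R}e)$ by appealing to the identification of $\tilde{\fM}$ with $T^*\Gr(2,4)$ and the uniqueness of symplectic resolutions of $\overline{\mathcal{N}}(\lambda)$, but in this paper the Nakajima--Takayama identification is applied to $\tilde{\fM}$ only \emph{after} one knows it is a symplectic resolution, which is exactly what is to be proved; moreover the cited theorem is a general statement about the quiver data, not special to $n=4$, $k=2$, so matching charts against the Grassmannian cannot substitute for an argument that the relevant localizations $\coul^0_f$ are smooth (in \cite{WebcohII} this comes from the identification with the BFN resolution, not from chart-by-chart polynomiality, which you assert but do not establish). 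The remaining assertions---finite generation of $\coul$ over $\coul_0$, triviality of $\mathcal{O}(1)$ over the smooth locus, birationality---are also stated without proof, though these are more routine once the convolution-algebra identification is in hand.
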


\subsection{Grassmannians from quiver gauge theories}
Fix integers $n$ and $0<k\leq n/2$, as before.  Let $\Gamma=A_{n-1}$, with $\vertex=\{1,\dots, n-1\}$ in the usual identification; since orientation will occasionally be important, we choose the arrows $i\to i+1$.  Consider the dimension vectors $\Bw=\mathbf{e}_k+\mathbf{e}_{n-k}$, and $\Bv=\sum_{i=1}^{n-1}\max(i,n-i-1,k)$.  For example, if $n=4,k=2$, then $\Bw=(0,2,0)$ and $\Bv=(1,2,1)$.  On the other hand, if $n=7,k=2$, then $\Bw=(0,1,0,0,1,0)$ and $\Bv=(1,2,2,2,2,1)$.  If we consider a $k\times (n-k)$ grid of boxes indexed by $\{1,\dots, k\}\times \{1,\dots, n-k\}$, then $v_m$ is the number of boxes $(i,j)$ that satisfy $m=i+j-1$.  
Let $e$ be the idempotent $\textcolor{red}{(n-k)}1^{(v_1)}2^{(v_2)}3^{(v_3)}\cdots (n-1)^{(v_{n-1})} \textcolor{red}{k}$.  In the case of $n=4$ and $k=2$, this idempotent is given by
$\textcolor{red}{2}12^{(2)}3\textcolor{red}{2}$.  

The choice of $\Bw,\Bv$ above might seem slightly random, but it is related to $T^*\Gr(n-k,n)$ by a result of Nakajima and Takayama, building on work of Maffei, Mirkovi\'c-Vilonen and earlier work of Nakajima. 
In the notation of \cite[\S 7.4]{nakajimaCherkisBow2017}, we see that $T^*\Gr(n-k,n)$ is the unique symplectic variety which is a resolution of $\overline{\mathcal{N}}(\lambda)=\overline{\mathcal{N}}(\lambda)\cap \mathcal{S}(\mu)$ where \[\lambda=(\underbrace{2,\cdots, 2}_{k \text{ times}},\underbrace{1,\dots, 1}_{n-2k\text{ times}}, 0,\dots,0)\qquad \mu=(1,\dots, 1).\]
Our choice of $\Bw$ and $\Bv$ arises from this choice of weights exactly as in \cite[(7.7)]{nakajimaCherkisBow2017}. 

Thus, we have that:
\begin{theorem}[\mbox{\cite[Th. 7.11]{nakajimaCherkisBow2017}}]\label{th:NT-iso}
    The Coulomb branch $\fM$ is isomorphic as a Poisson variety to $\overline{\mathcal{N}}(\lambda)$ and its symplectic resolution $\tilde{\fM}$ is isomorphic to $T^*\Gr(n-k,n)$ as a symplectic variety.  

    This isomorphism intertwines the weight 2 scaling $\C^*$-action on cotangent fibers with the scaling grading on KLRW algebras given in \cref{eq:KLRW-grading} and the action of diagonal matrices with the winding grading by $\Z^{n-1}$, identifying $(a_1,\dots, a_{n-1})$ with $\sum a_i\alpha_i$ in the root lattice of $\mathfrak{sl}_n$. 
\end{theorem}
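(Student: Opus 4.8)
The plan is to deduce the statement rather than prove it from scratch: all of the essential geometry is contained in \cite{nakajimaCherkisBow2017} and \cite{WebcohII}. The first sentence combines three inputs. By \cite[Th. 6.15]{WebcohII}, $\fM=\Spec(e\mathring{R}e)$ is the BFN Coulomb branch of the quiver gauge theory attached to $\Gamma=A_{n-1}$, $\Bv$, $\Bw$, and by \cite[Th. 6.19]{WebcohII} its resolution $\tilde{\fM}=\Proj(\coul)$ is a symplectic resolution of $\fM$. By \cite[Th. 7.11]{nakajimaCherkisBow2017}, this Coulomb branch is isomorphic as a Poisson variety to the bow variety $\overline{\mathcal{N}}(\lambda)\cap\mathcal{S}(\mu)$, and its resolution, as a symplectic variety, to the associated quiver-type resolution; for the $\lambda=(2^k,1^{n-2k},0^k)$ and $\mu=(1^n)$ identified just above these are $\overline{\mathcal{N}}(\lambda)$ and $T^*\Gr(n-k,n)$. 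The only genuine verification needed here is that our $\Bv,\Bw$ are exactly the data the recipe \cite[(7.7)]{nakajimaCherkisBow2017} attaches to this pair $(\lambda,\mu)$, which is the box-counting recorded in the paragraph preceding the theorem.

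The two grading assertions amount to tracking group actions through these identifications. For the scaling $\C^*$: the BFN Coulomb branch carries its conical grading, which under \cite[Th. 6.15]{WebcohII} is the scaling grading of \cref{eq:KLRW-grading}, while $T^*\Gr(n-k,n)$ carries the conical $\C^*$ scaling the cotangent fibers. The content is the normalization---that this is the grading for which the symplectic form has weight $2$, so fibers scale with weight $2$ and a single dot has degree $2$. One fixes the factor by testing on explicit functions: the Springer map $\sigma$ of \Cref{sec:TGr} is the resolution map $\tilde{\fM}\to\fM$ and is linear along the cotangent fibers, so the matrix-entry functions on $\overline{\mathcal{N}}(\lambda)\subset\g$ pull back to weight-$2$ functions on $T^*\Gr(n-k,n)$; since $\overline{\mathcal{N}}(\lambda)$ is a cone these functions generate its coordinate ring, and on the KLRW side $e\mathring{R}e$ is correspondingly generated in scaling-degree $2$.

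For the torus action: the gauge theory has topological symmetry torus $T_F$ with character lattice $\pi_1(G(\Gamma))\cong\Z^{n-1}$, whose action on $\fM$ corresponds under \cite[Th. 6.15]{WebcohII} to the winding grading. Geometrically, the residual $GL_n$-action on $T^*(GL_n/P)$ kills the center of $GL_n$ (scalars act trivially on $GL_n/P$ and by conjugation on $\g$), so it descends to an action of the adjoint torus $\bar{T}=T/Z(GL_n)$, whose character lattice is precisely the root lattice of $\mathfrak{sl}_n$. Reading off the $\bar{T}$-weight of the matrix-entry functions above in terms of the simple roots $\alpha_i$ yields the identification of $(a_1,\dots,a_{n-1})$ with $\sum a_i\alpha_i$; the overall sign and the labelling of the $\alpha_i$ are pinned down by the orientation $i\to i+1$ of $A_{n-1}$ and the left/right convention for the two red strands.

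The main obstacle is not geometric---everything substantive is in \cite{nakajimaCherkisBow2017} and \cite{WebcohII}---but the careful bookkeeping of conventions in the last two paragraphs. One must be sure that the conical grading of \cite[Th. 6.15]{WebcohII} is normalized so that a dot has weight $2$ (not $1$) relative to fiber scaling, and that the winding lattice maps to the root lattice of $\mathfrak{sl}_n$ with the asserted sign rather than to its negative or to the coroot lattice. Each check is routine, but these are precisely the conventions that propagate into the explicit transition-function computations of \Cref{sec:TGr} and into the description of the tilting generator, so I would carry them out by hand rather than inherit them.
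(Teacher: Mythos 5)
Your proposal is essentially what the paper does: the theorem is imported from \cite[Th. 7.11]{nakajimaCherkisBow2017} together with \cite[Th. 6.15, 6.19]{WebcohII}, with the only work being the verification that $\Bw,\Bv$ match the recipe of \cite[(7.7)]{nakajimaCherkisBow2017} for $\lambda=(2^k,1^{n-2k},0^k)$, $\mu=(1^n)$, and the bookkeeping of the scaling and winding gradings, which the paper likewise treats as the data that pins the isomorphism down essentially uniquely. So the proposal is correct and follows the same route as the paper.
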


This isomorphism depends on a long chain of so-called  ``Hanany-Witten moves'' and thus one does not immediately see explicit formulas for it.  However, this isomorphism is essentially uniquely fixed by compatibility with scaling and winding gradings.

In particular, the embedding $\overline{\mathcal{N}}(\lambda) \subset \mathfrak{sl}_n^*$ induces a surjective map $\Sym(\mathfrak{sl}_n)\to \C[\overline{\mathcal{N}}(\lambda)]$, which is an isomorphism in degree 2.  That is, the elements of $\C[\overline{\mathcal{N}}(\lambda)] $ of scaling degree 2 form a copy of $\mathfrak{sl}_n$ as a Lie algebra under Poisson bracket.  

Thus, we can identify each of the root spaces of $\mathfrak{sl}_n$ by their scaling degree, and the fact that the root space for $\pm(\alpha_i+\cdots +\alpha_{j-1})$ for $i<j$ must have winding grading $(0,\dots,0, 1,\dots, 1,0,\dots, 0)$.

For computing degrees, it is helpful to represent monopole operators with tableaux on $k\times (n-k)$ rectangles, as discussed above.  For each $m$, we have an unordered $v_m$-tuple of winding numbers that describe how many times the strands with label $m$ wrap around the cylinder.  Let $a_1^{(m)}\leq a_2^{(m)}\leq \cdots \leq a_{v_m}^{(m)} $ be these winding numbers in increasing order.  Then we can represent these as a tableau (with no assumption on any inequalities) where we fill the boxes $(i,j)$ satisfying $m=i+j-1$ (which we refer to as a {\bf diagonal}) with the integers $a_{j-s}^{(m)}$ where $s=\min(0, m-k)$ is the minimal column number of a box satisfying this equality $m=i+j-1$ minus 1; that is, we fill these boxes with the entries $a_{*}^{(m)}$ increasing from left to right.  The degree of the monopole operator for such a tableau is given by the sum of the contributions:
\begin{enumerate}
	\item $-2|a_{p}^{(m)}-a_{q}^{(m)}|$ for $p<q$.
	\item $|a_{p}^{(m)}-a_{q}^{(m\pm 1)}|$ for all $p,q$.
	\item $|a_{p}^{(k)}|+|a_{p}^{(n-k)}|$ for all $p$.  
\end{enumerate}
\begin{lemma}
	The only such tableau with degree 0 is the constant 0 tableau.  
\end{lemma}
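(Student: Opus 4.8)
The plan is to show that all three types of contributions to the degree are non-negative, so that a degree-$0$ tableau forces every individual contribution to vanish, and then argue that this pins down the constant $0$ tableau. First I would observe that contribution (ii), being a sum of absolute values $|a_p^{(m)}-a_q^{(m\pm1)}|$, is manifestly $\geq 0$; similarly contribution (iii), a sum of $|a_p^{(k)}|+|a_p^{(n-k)}|$, is $\geq 0$. The subtlety is contribution (i), which is \emph{negative}: $-2\sum_{p<q}|a_p^{(m)}-a_q^{(m)}|$. So the real content is that the positive contributions from (ii) dominate the negative contribution from (i) at each vertex, and the key observation is that the number of adjacent diagonals ($m\pm1$) compensates for the factor of $2$.

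The heart of the argument is a local estimate at each label $m$. Fix $m$ and consider the $v_m$ winding numbers $a_1^{(m)}\leq\cdots\leq a_{v_m}^{(m)}$ on the diagonal $m=i+j-1$, together with the $v_{m-1}$ numbers on diagonal $m-1$ and the $v_{m+1}$ numbers on diagonal $m+1$ (where $v_0=v_n=0$, and the framing nodes $k$ and $n-k$ contribute instead the absolute-value terms of type (iii)). Because the tableau shape is a $k\times(n-k)$ rectangle and adjacent diagonals differ in length by at most one, each box on diagonal $m$ has a neighbor (in the rectangle) on diagonal $m-1$ \emph{or} on diagonal $m+1$ — in fact, away from the corners, two neighbors, one on each side. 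Using the box-adjacency structure I would match up the terms: for a box $(i,j)$ on diagonal $m$ with value $a$, its vertical neighbor $(i-1,j)$ (if it exists) lies on diagonal $m-1$ and its horizontal neighbor $(i,j+1)$ lies on diagonal $m+1$, and the triangle inequality $|a-b|+|a-c|\geq |b-c|$ plus a telescoping/convexity argument on the sorted sequences lets me bound the total type-(ii) contribution below by $2\sum_{p<q}|a_p^{(m)}-a_q^{(m)}|$ — precisely cancelling (i). The cleanest way to package this is: for each fixed $m$, the sum of type-(ii) terms touching diagonal $m$ from \emph{both} sides is at least $2$ times the ``spread'' $\sum_{p<q}|a_p^{(m)}-a_q^{(m)}|$, because each of the two adjacent diagonals (or one diagonal plus a framing contribution) individually contributes at least that spread by a rearrangement inequality on monotone sequences. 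I expect the bookkeeping with the corner diagonals $m=1$ and $m=n-1$, where only one neighbor exists but the framing terms (iii) are present, to be the fiddly part.

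Granting the inequality, equality in degree $0$ forces: all type-(iii) terms vanish, so $a_p^{(k)}=a_p^{(n-k)}=0$ for all $p$; all type-(ii) inequalities are tight, which by the rearrangement/triangle-inequality analysis forces each diagonal's sorted sequence to be ``compatible'' with its neighbors, and all type-(i) spreads vanish, so each $a^{(m)}_\bullet$ is constant in $p$ — say equal to $c_m$. Then the vanishing of the type-(ii) terms gives $|c_m - c_{m\pm1}| = 0$, so all $c_m$ are equal, and since $c_k = 0$ (from (iii)) we get $c_m = 0$ for every $m$. Hence every winding number is $0$, i.e. the tableau is the constant $0$ tableau.

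The main obstacle is making the ``two adjacent diagonals each contribute at least the spread'' step rigorous uniformly, including the endpoint diagonals and the interaction with the framing contributions (iii) — essentially one needs a clean combinatorial lemma saying that for monotone integer sequences $(a_p)$, $(b_q)$ of lengths differing by at most one and arranged along adjacent diagonals of a rectangle, $\sum_{\text{adjacent }(p,q)}|a_p-b_q| \geq \sum_{p<p'}|a_p-a_{p'}|$, with equality iff the $a_p$ are all equal and equal to the $b_q$. Once that lemma is in hand the theorem follows by summing over $m$ and reading off the equality case; I would state and prove that lemma first and then assemble the pieces as above.
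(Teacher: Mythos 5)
There is a genuine gap, and it sits in the global bookkeeping rather than in the idea of bounding the negative terms by the positive ones. Write $S(m)=\sum_{p<q}|a^{(m)}_p-a^{(m)}_q|$ and $E(m,m+1)=\sum_{p,q}|a^{(m)}_p-a^{(m+1)}_q|$, so that the degree is $\sum_m E(m,m+1)+F-2\sum_m S(m)$ with $F$ the framing contribution. Your local estimate is one-sided: each neighbouring diagonal contributes at least $S(m)$, hence $E(m-1,m)+E(m,m+1)\geq 2S(m)$. But when you sum this over $m$, each edge term $E(m,m+1)$ is used twice (once for diagonal $m$, once for diagonal $m+1$) while it appears only once in the degree; the summed inequality gives roughly $\sum_m E(m,m+1)\geq\sum_m S(m)$, not the needed $\sum_m E(m,m+1)+F\geq 2\sum_m S(m)$. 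Already for $k=2,n=5$, where two adjacent diagonals both have length $2$, your per-diagonal bounds do not imply nonnegativity of the degree. What is actually needed is the two-sided per-edge inequality $E(m,m+1)\geq S(m)+S(m+1)$; this is true, but only because adjacent diagonals of the $k\times(n-k)$ rectangle have lengths differing by at most one (it fails for lengths $3$ and $1$, e.g.\ $a=(0,1,2)$, $b=(1)$), and it is most easily proved by the level-set decomposition $|x-y|=\int|\mathbf{1}_{x>\theta}-\mathbf{1}_{y>\theta}|\,d\theta$, which reduces it to $(\alpha-\beta)(\alpha-\beta-(s-t))\geq 0$ for integers. Since the extreme diagonals have length one (zero spread), summing the corrected inequality over edges gives degree $\geq F\geq 0$.

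A second problem is the equality clause of your auxiliary lemma: equality does \emph{not} force the entries of a diagonal to be equal (e.g.\ $a=(0,1)$, $b=(0,1)$ on equal-length diagonals, or $a=(0,1)$ against a single box $b=(0)$, give equality). Indeed it cannot, since in the twisted setting the degree-$0$ tableaux of \Cref{lem:twisting-minimal} are exactly the interlacing ones, which are far from constant; only the framing term distinguishes the untwisted case. So your step ``each diagonal is constant, hence all $c_m$ coincide'' does not go through. The correct equality analysis is to use $F=0$ to force the diagonals $k$ and $n-k$ to vanish and then propagate outward through the equality conditions of the per-edge inequalities. Note also that the paper's own proof is entirely different in spirit: it is a local decrement argument (repeatedly lowering a suitably chosen maximal entry and checking the degree weakly, and somewhere strictly, decreases), avoiding any global convexity estimate. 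Your route can be repaired as indicated, but as written the key lemma is both unproved and false in its equality statement, and the summation over diagonals does not yield the bound you need.
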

\begin{proof}
	Let $r$ be the largest value appearing in the tableau.  Assume $r>0$.  There is at least one box $(i,j)$ with entry $r$ such that neither $(i,j+1)$ nor $(i-1,j)$ has entry $r$.  Let $a$ and $b$ be the number of other boxes in the same diagonal with entries $<r$ or $=r$ respectively, and $a^{\pm},b^{\pm}$ be the number of boxes in adjacent diagonals with entries $<r$ or $=r$, respectively.  In this case, changing this entry from $r$ to $r-1$ changes the contribution from the same diagonal by $2a-2b$ and from adjacent diagonals by $b^++b^--a^+-a^--\delta_{i+j-1,k}-\delta_{i+j-1,n-k}$.  This is 0 if every box below or right of $(i,j)$ in adjacent diagonals has label $\geq r$, and negative otherwise.  Thus, there is a chain of moves which weakly decrease degree that we can use to turn every positive label into a 0.  Note that if there is only one entry with label $r$, then decreasing it decreases the degree, so this chain will have strictly decreased degree if there were any positive entries.
	
	A symmetric argument allows us to get rid of all negative entries and arrive at the tableau with all zeros.  This has degree 0, so any tableau with non-zero entries has positive degree. 
\end{proof}

If instead we consider diagrams with twisting degree $\ell$, then the rules for calculating degrees are almost the same, except that the contribution from the red strands is $|a_{p}^{(k)}|+|a_{p}^{(n-k)}-\ell|$ for all $p$ minus a global factor of $k\ell$.  The same argument above shows that: 
\begin{lemma}\label{lem:twisting-minimal}
	The diagrams of scaling degree 0 and twisting degree $\ell$ correspond to tableaux with entries in $[0,\ell]$ which satisfy $a(i-1,j)\leq a(i,j)\leq a(i,j-1)$.  
\end{lemma}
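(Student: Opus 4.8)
The plan is to re-run the local-move argument from the proof of the preceding lemma, adapted to the shifted reference value $\ell$ contributed by the right red strand. As there, $\deg$ depends only on the multiset of winding numbers in each diagonal of the $k\times(n-k)$ rectangle, and under decrementing or incrementing a single box entry the intra-diagonal term (1.) and the adjacent-diagonal term (2.) transform exactly as in the untwisted case; the only change is that the red-strand term is now $\sum_p\bigl(|a_p^{(k)}|+|a_p^{(n-k)}-\ell|\bigr)$, with the convention that when $n=2k$ the single marked diagonal contributes $|a_p^{(k)}|+|a_p^{(k)}-\ell|$ per box. The one new elementary observation is that, since $\ell\ge 0$, decrementing an entry that is $>\ell$ can only decrease this term, and symmetrically incrementing a negative entry can only decrease it.

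With this in hand, the bookkeeping of contributions (1.)--(3.) copied from the model proof shows that the decrement move applied to a box of locally maximal value and the increment move applied to a box of locally minimal value are both $\deg$-nonincreasing. I would first run decrement moves while some entry exceeds $\ell$ and increment moves while some entry is negative, reaching a filling with entries in $[0,\ell]$ and no larger degree; then I would run moves repairing any violation of $a(i-1,j)\le a(i,j)$ or $a(i,j)\le a(i,j-1)$---again decrementing a locally maximal offending box or incrementing a locally minimal one---arriving at a filling that is in range and satisfies both monotonicity inequalities. As in the model argument, the crucial refinement is that as long as the filling is not already of this form, one of these moves can be chosen to \emph{strictly} decrease $\deg$. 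Hence it remains to prove that every filling with entries in $[0,\ell]$ satisfying $a(i-1,j)\le a(i,j)\le a(i,j-1)$ has $\deg=0$: granting this, $\deg\ge 0$ always, and $\deg=0$ forces the filling to have this form, since otherwise a strictly-decreasing move would produce a filling of negative degree.

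For that base computation I would use the layer-cake identity $|x-c|=\sum_{t\in\Z+\frac12}\bigl|[x>t]-[c>t]\bigr|$, which rewrites the degree of an in-range filling $a$ as $\sum_{t=\frac12}^{\ell-\frac12}\bigl(D_t([a>t])-k\bigr)$, where $D_t$ is the untwisted degree formula applied to the $0/1$ filling $[a>t]$ but now with red reference $0$ on diagonal $k$ and reference $1$ on diagonal $n-k$ (thresholds outside $(0,\ell)$ contribute nothing because $a$ is in range, and the $-k$ per threshold collects the global $-k\ell$). When $a$ is monotone, each layer $[a>t]$ has its $1$-entries forming a staircase closed downwards and to the left, and a one-line computation shows such a layer contributes exactly $k$ to its $D_t$, so every summand vanishes. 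Read in reverse, the same identity reduces the general bound $\deg\ge 0$ to the corresponding inequalities for $0/1$ fillings, which follow from the preceding lemma (and its complement $b\mapsto\mathbf 1-b$) together with the $0/1$ instance of the monotonization analysis.

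The step I expect to be the genuine work is this monotonization move: for an in-range filling that violates one of the two inequalities, pinning down a single entry to decrement or increment that is guaranteed to strictly lower $\deg$, and verifying this in all local configurations, notably across the two marked diagonals and, when $n=2k$, on the single marked diagonal. The clipping-to-$[0,\ell]$ moves and the base-case layer computation are routine adaptations of material already in the excerpt.
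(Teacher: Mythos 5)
Your overall architecture is the one the paper intends (it disposes of this lemma with the single sentence ``the same argument above shows that''), and your layer-cake reduction for the base case --- that every in-range monotone filling has degree exactly $0$, checked threshold by threshold on $0/1$ staircase layers --- is a genuine addition worth keeping, since the ``correspond to'' statement does need that direction and the move argument alone only compares a general filling to \emph{some} monotone one reached by moves. But the proposal has a real gap: the step you yourself label ``the genuine work'' is the entire content of the lemma, and you have not supplied it. You assert that ``the decrement move applied to a box of locally maximal value and the increment move applied to a box of locally minimal value are both $\deg$-nonincreasing,'' but this is not a formal consequence of the model proof: there the move is applied to a box carrying the \emph{global} maximum $r$, chosen so that neither $(i,j+1)$ nor $(i-1,j)$ has entry $r$, and the sign analysis uses that no entry anywhere exceeds $r$, because the degree couples a box to \emph{every} box of the two adjacent diagonals (and to the red reference on the marked diagonals), not only to its grid neighbours. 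A box that is merely ``locally maximal'' can lie below entries elsewhere in an adjacent diagonal, and decrementing it then increases those cross terms in addition to the possibly positive intra-diagonal change $2a-2b$; whether the total is still $\le 0$, and exactly when it is strictly negative, is the case analysis that must be redone for in-range fillings violating $a(i-1,j)\le a(i,j)\le a(i,j-1)$, including boxes on the marked diagonals $k$ and $n-k$ and the coincident case $n=2k$. Until that analysis is written out (choice of box, verification of weak decrease, and exhibition of a strictly decreasing move whenever the filling is not of the stated form), the proposal is a plan rather than a proof of precisely the point the paper compresses into ``the same argument.''

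Two smaller issues: the ``one-line computation'' that a monotone $0/1$ layer contributes exactly $k$ is asserted rather than shown (it is true and checkable by counting crossings diagonal by diagonal, but it is part of the proof and should be carried out); and your closing use of the layer-cake identity to obtain $\deg\ge 0$ is circular as stated, since it appeals to ``the $0/1$ instance of the monotonization analysis,'' i.e.\ to the very step that is missing. Once the move analysis is done, $\deg\ge 0$ follows directly from the chain of weakly decreasing moves terminating in a degree-$0$ monotone filling, and that portion of the layer-cake discussion can simply be dropped.
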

Note that such tableau are in bijection with basis vectors of the representation with highest weight $\ell\omega_k$; for example, if we extend our diagram by adding all boxes $(i,j)$ satisfying $i+j\leq n+1$ and fill all entries with $j>k$ with $0$ and with $i>n-k$ with $\ell$, then reading along the diagonals gives a Gelfand-Tsetlin tableau for the corresponding representation.  The usual Gelfand-Tsetlin formulas thus even provide explicit matrices for this representation.

\subsubsection{Quantization}
To better understand this isomorphism, it is helpful to note that there is a quantum version of it, a special case of the ``quantum Mirkovi\'c-Vybornov isomorphism'' of \cite{WWY}.  In particular, this is the cleanest way of seeing that we have an isomorphism not just of varieties, but of Poisson varieties.

We first consider a noncommutative deformation $A_{\hbar}^{b_L,b_R}$ over $\C[\hbar]$ of $\C[\fM]$, where one deforms the relations so that sliding a dot past the ``seam'' $x=0$ on the cylinder shifts the value of the dot by $\pm \hbar$, and the relations involving red strands are shifted by scalars $b_L,b_R$ corresponding to the left and right strands:
     \begin{equation}\label{a-dot-slide}
    \begin{tikzpicture}[very thick,baseline,scale=.7]
  \draw(-3,0) +(-1,-1) -- +(1,1);
  \draw[fringe](-3,0) +(0,-1) --  +(0,1);
\fill (-3.5,-.5) circle (3pt); \end{tikzpicture}
=
 \begin{tikzpicture}[very thick,baseline,scale=.7] \draw(1,0) +(-1,-1) -- +(1,1);
  \draw[fringe](1,0) +(0,-1) --  +(0,1);
\fill (1.5,.5) circle (3pt);
    \end{tikzpicture} +h  \begin{tikzpicture}[very thick,baseline,scale=.7] \draw(1,0) +(-1,-1) -- +(1,1);
  \draw[fringe](1,0) +(0,-1) --  +(0,1);
    \end{tikzpicture}
\qquad \qquad     \begin{tikzpicture}[very thick,baseline,scale=.7]
  \draw(-3,0) +(1,-1) -- +(-1,1);
  \draw[fringe](-3,0) +(0,-1) --  +(0,1);
\fill (-2.5,-.5) circle (3pt); \end{tikzpicture}
=
 \begin{tikzpicture}[very thick,baseline,scale=.7] \draw(1,0) +(1,-1) -- +(-1,1);
  \draw[fringe](1,0) +(0,-1) --  +(0,1);
\fill (.5,.5) circle (3pt);
    \end{tikzpicture} -h  \begin{tikzpicture}[very thick,baseline,scale=.7] \draw(1,0) +(1,-1) -- +(-1,1);
  \draw[fringe](1,0) +(0,-1) --  +(0,1);
    \end{tikzpicture}
  \end{equation}
        \begin{equation*}\subeqn\label{x-cost-1}
   \begin{tikzpicture}[very thick,baseline,scale=.8]
     \draw (-2.8,0)  +(0,-1) .. controls (-1.2,0) ..  +(0,1) node[below,at start]{$i$};
        \draw[red] (-2,0)  +(0,-1)--node[below,at start ]{$r_L$}  +(0,1);
   \end{tikzpicture}
 =\begin{cases}
  \begin{tikzpicture}[very thick,baseline,scale=.8]
  \draw[red] (2.3,0)  +(0,-.5) -- node[below,at start ]{$r_L$} +(0,.5);
        \draw (1.5,0)  +(0,-.5) -- +(0,.5) node[below,at start]{$i$};
        \fill (1.5,0) circle (3pt);
 \end{tikzpicture} +hb_{L}  \begin{tikzpicture}[very thick,baseline,scale=.8]
  \draw[red] (2.3,0)  +(0,-.5) -- node[below,at start ]{$r_L$} +(0,.5);
        \draw (1.5,0)  +(0,-.5) -- +(0,.5) node[below,at start]{$i$};
 \end{tikzpicture} & i=n-k\\
 \begin{tikzpicture}[very thick,baseline,scale=.8]
     \draw (-2.8,0)  +(0,-.5) --  +(0,.5) node[below,at start]{$i$};
        \draw[red] (-2,0)  +(0,-.5)--node[below,at start ]{$r_L$}  +(0,.5);
   \end{tikzpicture}          & i\neq n-k
 \end{cases} \qquad   \begin{tikzpicture}[very thick,baseline,scale=.8]
     \draw (-2.8,0)  +(0,-1) .. controls (-1.2,0) ..  +(0,1) node[below,at start]{$i$};
        \draw[red] (-2,0)  +(0,-1)--node[below,at start ]{$r_R$}  +(0,1);
   \end{tikzpicture}
 =\begin{cases}
  \begin{tikzpicture}[very thick,baseline,scale=.8]
  \draw[red] (2.3,0)  +(0,-.5) -- node[below,at start ]{$r_R$} +(0,.5);
        \draw (1.5,0)  +(0,-.5) -- +(0,.5) node[below,at start]{$i$};
        \fill (1.5,0) circle (3pt);
 \end{tikzpicture} +hb_{R}  \begin{tikzpicture}[very thick,baseline,scale=.8]
  \draw[red] (2.3,0)  +(0,-.5) -- node[below,at start ]{$r_R$} +(0,.5);
        \draw (1.5,0)  +(0,-.5) -- +(0,.5) node[below,at start]{$i$};
 \end{tikzpicture} & i=k\\
 \begin{tikzpicture}[very thick,baseline,scale=.8]
     \draw (-2.8,0)  +(0,-.5) --  +(0,.5) node[below,at start]{$i$};
        \draw[red] (-2,0)  +(0,-.5)--node[below,at start ]{$r_R$}  +(0,.5);
   \end{tikzpicture}          & i\neq k
 \end{cases}
 \end{equation*}
Note that this is still a graded algebra with $\deg \hbar=2$
As usual, we can define the Poisson bracket on $\C[\fM]=A_{\hbar}/\hbar A_{\hbar}$ by the formula $\{a,b\}=\frac 1{\hbar}[a,b]\pmod \hbar $.  In this non-commutative algebra, the degree 2 elements form a Lie algebra under the bracket $\frac 1{\hbar}[a,b]$ which is isomorphic to $\mathfrak{pgl}_n\oplus \C$.  

Note that $A_{\hbar}^{b_L,b_R}$ only depends as an algebra on the difference $b_L-b_R$, since we can shift these parameters simultaneously by sending $\dotstrand\mapsto \dotstrand +a$ for any $a\in \C$. 

By \cite[Lem. 8.8]{WebcohII}, the algebra $A_{\hbar}^{b_L,b_R}$ is the quantum Coulomb branch as constructed by Braverman--Finkelberg--Nakajima \cite{BFN}.  The parameter $b_L-b_R$ is a ``flavor parameter'' in this context.  
It will be easier to compare this with other papers where these flavor parameters appear by comparing with the ``KLR Yangian'' algebra $\BK$ defined in \cite{KTWWYO}.  This algebra is very similar to the deformed KLRW algebra discussed above, but with some key differences.
Let 
\begin{equation}
    p_j(u)=\begin{cases}
    1 & j\notin \{k,n-k\}\\
    u+k-2(b_R+1) & j=k\neq n-k\\
    u+(n-k)-2b_L  & j=n-k\neq k\\
    (u+k-2(b_R+1))(u+k -2b_L) & j=k=n-k.\\
\end{cases}
\end{equation} 
\begin{lemma}
    We can define a homomorphism of the KLR Yangian $\BK$ with the polynomials $p_j$ as above into $A_{\hbar}^{b_L,b_R}$ specialized at $\hbar=-2$ by the rule:
    \begin{enumerate}
        \item a dotless diagram is sent to the same diagram with the red strand $r_L$ placed as $x=\epsilon$, and $r_R$ at $x=-\epsilon$ for $\epsilon>0$ very small.  
        \item a dot $\dotstrand$ on a strand with the label $i$ is mapped to $\dotstrand+i$.
    \end{enumerate}
\end{lemma}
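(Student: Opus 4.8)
The plan is to check that the assignment in (1)--(2), extended $\C$-linearly and multiplicatively with $\hbar\mapsto -2$, carries each defining relation of $\BK$ to an identity that holds in $A_{\hbar}^{b_L,b_R}$ specialized at $\hbar=-2$. Since we only need a homomorphism \emph{out of} $\BK$, it suffices to verify its relations one at a time in the image. Recall from \cite{KTWWYO} that $\BK$ has black-strand generators with the nilHecke relations, $Q_{ij}$-relations for double crossings of black strands with distinct labels $i,j$ (with $Q_{i,i+1}$ linear and $Q_{ij}=1$ for $|i-j|\ge2$), and that the one genuine difference from $\mathring R$ is in the red--black relations, which are governed by the polynomials $p_j(u)$: a black strand of label $j$ is transparent to a red strand of a different label, while pulling it past a red strand of its own label and back introduces $p_j$ evaluated at the dot on that strand. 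On the target side we use the relations \eqref{first-QH}--\eqref{red-triple} of $\mathring R$, the seam dot-slide relation \eqref{a-dot-slide}, and the red-crossing relations \eqref{x-cost-1}, all at $\hbar=-2$.

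\emph{Black-strand relations.} These are immediate: the shift $y\mapsto y+i$ on strands of label $i$ is by a \emph{constant}, so it preserves the nilHecke relations (which see only differences of dots on equal-label strands) and the triple-point relation \eqref{triple-dumb}. For a double crossing of black strands with labels $i\ne j$, the relation \eqref{black-bigon} evaluates to $0$, $\pm(y_j-y_i)$, or $1$ according to the arrow structure; after the substitution $y_i\mapsto y_i+i$, $y_j\mapsto y_j+j$ this becomes precisely the polynomial $Q_{ij}$ of $\BK$, the extra constant $j-i=\pm1$ for adjacent vertices being exactly the normalization built into that presentation. Thus every relation of $\BK$ involving only black strands holds in the image.

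\emph{Red--black relations.} This is the heart of the argument. Under the map the two red strands of $\BK$ (labeled $n-k$ and $k$) straddle the seam, with $r_L$ at $x=\epsilon$ and $r_R$ at $x=-\epsilon$, so a black strand of label $i$ dragged once around the cylinder crosses $r_R$, then the seam, then $r_L$. By \eqref{x-cost-1} each red crossing is transparent unless the labels match, in which case it multiplies by $y-2b_R$ (for $r_R$, when $i=k$) or $y-2b_L$ (for $r_L$, when $i=n-k$), these being the $\hbar=-2$ specializations of $y+\hbar b_R$ and $y+\hbar b_L$; the seam crossing in between shifts the dot by $\pm\hbar=\mp2$ via \eqref{a-dot-slide}. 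Carrying out this composite case by case on whether $i\notin\{k,n-k\}$, $i=k\ne n-k$, $i=n-k\ne k$, or $i=k=n-k$, then applying the constant shift $y\mapsto y+i$ and absorbing the global $k\ell$ normalization noted after \Cref{lem:twisting-minimal}, one recovers the polynomials $p_j(u)$ exactly: the asymmetry between $2(b_R+1)$ and $2b_L$ comes from the seam shift being picked up on only one side of the composite, and, when $k=n-k$, the strand meets both red strands, so $p_k$ is the product of the two linear factors. One also checks directly that $\BK$'s ``dot slides past a red strand'' relations correspond to \eqref{x-cost-1} read with a dot already present. Hence the assignment descends to an algebra homomorphism.

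The main obstacle is precisely the bookkeeping in this last step: keeping straight the cyclic order in which a winding black strand meets $r_R$, the seam, and $r_L$, which crossing absorbs the $\pm\hbar$ seam shift, the interplay of that shift with the $\hbar=-2$ specialization and with the constant shift by $i$, and the global $k\ell$ factor, so that the composite reproduces $p_j(u)$ on the nose --- signs and the ``$+1$'' in $2(b_R+1)$ included. Everything else is formal, and as a consistency check one can compare against the identification of $A_{\hbar}^{b_L,b_R}$ with the BFN quantum Coulomb branch in \cite[Lem. 8.8]{WebcohII} and the quantum Mirkovi\'c--Vybornov isomorphism of \cite{WWY}.
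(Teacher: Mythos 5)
Your proposal is correct and follows essentially the same route as the paper's proof: one checks the defining relations of $\BK$ in the image, with the dot shift by $i$ absorbing the constant term of $\bar{Q}_{ij}$ in the purely black relations (using the orientation $i\to i+1$), the seam relation \eqref{a-dot-slide} matching the deformed dot-slide relation of $\BK$ at $\hbar=-2$, and the red--black relations reducing to the identity \eqref{x-cost-2}, which is exactly the case-by-case composite of \eqref{x-cost-1} with the seam shift that you describe (this is where the asymmetry $2(b_R+1)$ versus $2b_L$ in $p_j$ arises). The only stray point is your appeal to a ``global $k\ell$ normalization'' from the discussion after \Cref{lem:twisting-minimal}, which concerns degrees of twisted diagrams and plays no role in this verification, but nothing in your argument actually rests on it.
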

\begin{proof}
    The KLRW relations \cite[(3.1-3.3)]{KTWWYO} are almost identical to \crefrange{first-QH}{triple-dumb};  the only difference is that we use the polynomial $\bar{Q}_{ij}(u,v)$ which means that in $\BK$ we have the relation
 \begin{equation*}\subeqn\label{black-bigonYa}
    \begin{tikzpicture}[very thick,scale=.8,baseline]

      \draw (-2.8,0) +(0,-1) .. controls (-1.2,0) ..  +(0,1)
      node[below,at start]{$i$}; \draw (-1.2,0) +(0,-1) .. controls
      (-2.8,0) ..  +(0,1) node[below,at start]{$j$}; 
   \end{tikzpicture}=\quad
   \begin{cases}
   \begin{tikzpicture}[very thick,scale=.6,baseline=-3pt]
       \draw (2,0) +(0,-1) -- +(0,1) node[below,at start]{$j$};
       \draw (1,0) +(0,-1) -- +(0,1) node[below,at start]{$i$};\fill (2,0) circle (4pt);
     \end{tikzpicture}-\begin{tikzpicture}[very thick,scale=.6,baseline=-3pt]
       \draw (2,0) +(0,-1) -- +(0,1) node[below,at start]{$j$};
       \draw (1,0) +(0,-1) -- +(0,1) node[below,at start]{$i$};\fill (1,0) circle (4pt);
     \end{tikzpicture}- \begin{tikzpicture}[very thick,scale=.6,baseline=-3pt]
       \draw (2,0) +(0,-1) -- +(0,1) node[below,at start]{$j$};
       \draw (1,0) +(0,-1) -- +(0,1) node[below,at start]{$i$};
     \end{tikzpicture}& i\to j\\
  \begin{tikzpicture}[very thick,baseline=-3pt,scale=.6]
       \draw (2,0) +(0,-1) -- +(0,1) node[below,at start]{$j$};
       \draw (1,0) +(0,-1) -- +(0,1) node[below,at start]{$i$};\fill (1,0) circle (4pt);
     \end{tikzpicture}-\begin{tikzpicture}[very thick,scale=.6,baseline=-3pt]
       \draw (2,0) +(0,-1) -- +(0,1) node[below,at start]{$j$};
       \draw (1,0) +(0,-1) -- +(0,1) node[below,at start]{$i$};\fill (2,0) circle (4pt);
     \end{tikzpicture}- \begin{tikzpicture}[very thick,scale=.6,baseline=-3pt]
       \draw (2,0) +(0,-1) -- +(0,1) node[below,at start]{$j$};
       \draw (1,0) +(0,-1) -- +(0,1) node[below,at start]{$i$};
     \end{tikzpicture}& i\leftarrow j\\     
   \end{cases}
  \end{equation*}  Since our quiver is oriented with $i\mapsto i+1$, the image of the RHS of this equation will agree with  \cref{black-bigon}; all other KLRW relations are the same.

  The relation \cite[(4.7a)]{KTWWYO} agrees with \cref{a-dot-slide} after the specialization $\hbar=-2$.  In order for the relations \cite[(4.7b--d)]{KTWWYO} to follow from \eqref{red-triple} and (\ref{a-dot-slide}--\ref{x-cost-1}) we must have \begin{equation}
 \label{x-cost-2}
  \begin{tikzpicture}[very thick,baseline,scale=.7]
       \draw[wei] (-2.3,0)  +(0,-1)--  +(0,1);
       \draw[dashed] (-2,0)  +(0,-1)--  +(0,1);
       \draw[wei] (-1.7,0)  +(0,-1)--  +(0,1);
  \draw (-1,0)  +(0,-1) .. controls (-3.5,0) ..  +(0,1) node[below,at start]{$i$};\end{tikzpicture}
           =p_i\Bigg(\hspace{5mm}
  \begin{tikzpicture}[very thick,baseline,scale=.7]
    \draw (2.8,0)  +(0,-1) -- +(0,1) node[below,at start]{$i$};
          \draw[wei] (2.3,0)  +(0,-1)--  +(0,1);
       \draw[dashed] (2,0)  +(0,-1)--  +(0,1);
       \draw[wei] (1.7,0)  +(0,-1)--  +(0,1);
       \fill (2.8,0) circle (3pt);\end{tikzpicture}+i\Bigg)
       \qquad \qquad
  \end{equation}   
\end{proof}
The appearance of $\hbar=-2$ might seem strange, but this is an artifact of notational choices in the literature on Yangians;  all specializations of $\hbar$ to non-zero scalars are isomorphic, via their natural isomorphism to the degree 0 subspace of $A_{\hbar}^{b_L,b_R}[\hbar^{-1}]$.

Thus, in the notation of \cite{KTWWYO,WWY}, we should take $\mathbf{R}_k=\{-k+2(b_R+1)\}$ and $\mathbf{R}_{n-k}=\{(k-n+2b_L \} $ if $n>2k$ and $\mathbf{R}_k=\{-k+2b_L,-k+2(b_R+1)\}$ if $n=2k$.  By \cite[Th. 2.13]{WWY}, this means that $A_{\hbar}^{b_L,b_R}$ is a quotient of $U(\mathfrak{sl}_n)$ by the maximal ideal in its center that corresponds to the set 
\begin{multline}
\tilde{\mathbf R}=\{-n+3+2b_R, -n+5+2b_R,\dots, n-2k+1+2b_R,\\ -n+1 +2b_L,-n+3+2b_L,\dots, -n-1+2k+2b_L\}.    
\end{multline}
Explicitly, this is the maximal ideal of the center that vanishes on any Verma module for a weight $\lambda$ of $\mathfrak{gl}_n$ such that the entries of $\lambda+\rho$ in the coordinates $\epsilon_i(\lambda+\rho)$ as an unordered multiset agree with $ \tilde{\mathbf R}$; 
this maximal ideal is unchanged under the dot action of $S_n$ and thus only depends on these coordinates as an unordered set.  Furthermore, we are only interested in the intersection of this ideal with $U(\mathfrak{sl}_n)$, which is unchanged by simultaneous translation of the set $ \tilde{\mathbf R}$.  Examples of weights of this form which will be dominant for some values of $b_L-b_R$ are $(-n+k-1+b_L-b_R)\omega_k$ and $(-k+1+b_R-b_L)\omega_{n-k}$.

By \cite[Th. 4.3]{WWY}, the algebra $A_{\hbar}$ is also the algebra of sections of a sheaf $\mathscr{D}_{b_L,b_R}$ of twisted differential operators on $X$, where the twist depends on $b_L-b_R$ as determined in \cite[\S 3.3.2]{WWY}.   It is easy to clarify this dependence by noting that the parabolic Verma module with highest weight $\ell \omega_k$ is the space of sections of $\mathcal{L}^{\ell}$ on the open $P$ orbit in $X$ (for $\ell\in \Z$), and thus carries an action of twisted differential operators for this twist.  On the other hand, if we twist the $GL_n$-action on $X$ by the automorphism of taking inverse-transpose, then this space of sections will be the parabolic Verma module with highest weight $\ell \omega_{n-k}$ This shows that:
\begin{lemma}\label{lem:twist-fix}
    The algebra $A_{\hbar}^{b_L,b_R}$ is isomorphic to differential operators twisted by $\mathcal{L}^{\ell}$  when $b_L-b_R=\ell+n-k+1$ or, using the inverse-transpose automorphism, when $b_L-b_R=-\ell -k+1$.  
\end{lemma}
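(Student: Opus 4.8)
The plan is to determine the twisting class of $\mathscr{D}_{b_L,b_R}$ by exploiting that, for an \emph{integral} twist, a sheaf of twisted differential operators on $X$ must act on the sections of the corresponding line bundle, and then to read off the class by matching infinitesimal characters with the computation already carried out before the lemma.

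By \cite[Th. 4.3]{WWY}, at any fixed nonzero value of $\hbar$ the algebra $A_\hbar^{b_L,b_R}$ is the algebra of global sections of a sheaf $\mathscr{D}_{b_L,b_R}$ of twisted differential operators on $X=\Gr(n-k,n)$, and by \cite[\S 3.3.2]{WWY} its twisting class depends affine-linearly on $b_L-b_R$. Since $\operatorname{Pic}(X)\cong\Z$ is generated by $\mathcal{L}$, every sheaf of twisted differential operators on $X$ is of the form $\mathscr{D}_{\mathcal{L}^c}$ for a unique $c\in\C$, and for $c=\ell\in\Z$ this is literally the sheaf $\mathcal{L}^{\ell}\otimes_{\mathcal{O}_X}\mathscr{D}_X\otimes_{\mathcal{O}_X}\mathcal{L}^{-\ell}$ acting on sections of $\mathcal{L}^{\ell}$. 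Thus it suffices to compute the affine-linear function $c=c(b_L-b_R)$, and for that I may restrict attention to those $(b_L,b_R)$ for which $c=\ell$ is an integer.

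In that case, the sections of $\mathcal{L}^{\ell}$ over the open $P$-orbit in $X$ form the parabolic Verma module of highest weight $\ell\omega_k$, as recalled in the text, and this is a module over $\Gamma(X,\mathscr{D}_{b_L,b_R})$; hence $\Gamma(X,\mathscr{D}_{b_L,b_R})$ is the quotient of $U(\mathfrak{sl}_n)$ by the kernel of the infinitesimal character of that parabolic Verma module. On the other hand, the discussion preceding the lemma identifies $A_\hbar^{b_L,b_R}$ with the quotient of $U(\mathfrak{sl}_n)$ by the maximal central ideal corresponding to the multiset $\tilde{\mathbf R}$, equivalently to the weight $(-n+k-1+b_L-b_R)\omega_k$. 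Comparing infinitesimal characters forces $\ell=-n+k-1+b_L-b_R$, so $c(b_L-b_R)=b_L-b_R-n+k-1$ and $\mathscr{D}_{b_L,b_R}\cong\mathscr{D}_{\mathcal{L}^{\ell}}$ precisely when $b_L-b_R=\ell+n-k+1$. For the second identity I repeat this argument after twisting the $GL_n$-action on $X$ by the inverse-transpose automorphism: then the sections of $\mathcal{L}^{\ell}$ over the open orbit become the parabolic Verma module of highest weight $\ell\omega_{n-k}$, the relevant weight from \cite[Th. 2.13]{WWY} is $(-k+1+b_R-b_L)\omega_{n-k}$, and the same comparison yields $b_L-b_R=-\ell-k+1$.

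The main obstacle is bookkeeping with conventions: one must check that the normalization of ``twist by $\mathcal{L}^{\ell}$'' used in \cite{WWY} is the one under which the parabolic Verma module $M_P(\ell\omega_k)$ genuinely carries a module structure (i.e.\ that no additional $\rho$-shift has been absorbed into the twist), and that the affine-linear dependence of the twisting class on $b_L-b_R$ from \cite[\S 3.3.2]{WWY} has slope $1$, as used above. Once these are fixed, the comparison of infinitesimal characters is immediate, and uniqueness of the twist within the one-parameter family of sheaves of twisted differential operators on $X$ upgrades the equality of global sections to an isomorphism of sheaves.
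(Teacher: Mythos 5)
Your proposal is correct and follows essentially the same route as the paper: invoke \cite[Th. 4.3]{WWY} to realize $A_{\hbar}^{b_L,b_R}$ as sections of a twisted differential operator sheaf with twist depending on $b_L-b_R$, identify the parabolic Verma module of highest weight $\ell\omega_k$ (resp.\ $\ell\omega_{n-k}$ after inverse-transpose) with sections of $\mathcal{L}^{\ell}$ on the open $P$-orbit, and match against the central character determined by $\tilde{\mathbf R}$, i.e.\ the weights $(-n+k-1+b_L-b_R)\omega_k$ and $(-k+1+b_R-b_L)\omega_{n-k}$. Your added bookkeeping (slope of the affine-linear dependence, uniqueness of the twist since $\operatorname{Pic}(X)\cong\Z$) only makes explicit what the paper leaves implicit.
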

This quantization will make it easier to see the connection to line bundles on the resolution.  

\subsubsection{The isomorphism}
Armed with the Poisson structure induced by this quantization, we can relatively easily give a more explicit description of the isomorphism between $e\mathring{R}e$ and $\C[T^*X]$.  As discussed above, the algebra $\C[T^*X]$ is generated by the matrix entries $e_{i,j}$, which span a copy of $\mathfrak{pgl}_n$ as a Lie algebra under Poisson bracket; these are precisely the elements of scaling degree 2.  Any isomorphism of Lie algebras to the scaling degree 2 elements of $e\mathring{R}e$ (also considered under Poisson bracket) will induce an isomorphism $\C[T^*X]\to e\mathring{R}e$.

Since the space of elements of the correct winding and scaling degree is 1-dimensional, we must have:
\begin{equation}\label{homomorphism}
e_{i,i+1}\mapsto a_i\,\, 
 \begin{tikzpicture}[baseline,scale=2.3]
      \draw[very thick, dashed] (4,0) +(0,-.5) -- +(0,.5);
 \draw[line width =1mm] (4.3,0) +(0,-.5) -- +(0,.5);
  \draw[line width =1mm] (6,0) +(0,-.5) -- +(0,.5);
  \draw[very thick, dashed] (6.3,0) +(0,-.5) -- +(0,.5);
  \draw (4.3,0) +(0,-.7) node {\small$1$};
  \draw (6,0) +(0,-.7) node {\small$n-1$};
  \draw[line width =1mm] (4.8,0) +(0,-.5) -- +(0,.5);
  \draw[line width =1mm] (5.5,0) +(0,-.5) -- +(0,.5);
  \draw[line width =1mm] (5.15,0) +(0,-.5) -- +(0,.5);
\draw (5.15,0) +(0,-.7) node {\small$i$};
\draw (4.55,0) node {$\cdots$};
\draw (5.75,0) node {$\cdots$};
\draw[very thick] (5.15,-.5) to[out=90,in=-150] (6.3,0);
\draw[very thick] (5.15,.5) to[out=-90,in=30] (4,0);
        \draw[wei] (6.15,0) +(0,-.5) -- +(0,.5);
      \draw[wei] (4.15,0) +(0,-.5) -- +(0,.5);
\end{tikzpicture}\qquad e_{i+1,i}\mapsto b_i\,\,
 \begin{tikzpicture}[baseline,scale=2.3]
      \draw[very thick, dashed] (4,0) +(0,-.5) -- +(0,.5);
 \draw[line width =1mm] (4.3,0) +(0,-.5) -- +(0,.5);
  \draw[line width =1mm] (6,0) +(0,-.5) -- +(0,.5);
  \draw[very thick, dashed] (6.3,0) +(0,-.5) -- +(0,.5);
  \draw (4.3,0) +(0,-.7) node {\small$1$};
  \draw (6,0) +(0,-.7) node {\small$n-1$};
  \draw[line width =1mm] (4.8,0) +(0,-.5) -- +(0,.5);
  \draw[line width =1mm] (5.5,0) +(0,-.5) -- +(0,.5);
  \draw[line width =1mm] (5.15,0) +(0,-.5) -- +(0,.5);
\draw (5.15,0) +(0,-.7) node {\small$i$};
\draw (4.55,0) node {$\cdots$};
\draw (5.75,0) node {$\cdots$};
\draw[very thick] (5.15,.5) to[out=-90,in=150] (6.3,0);
\draw[very thick] (5.15,-.5) to[out=90,in=-30] (4,0);
        \draw[wei] (6.15,0) +(0,-.5) -- +(0,.5);
      \draw[wei] (4.15,0) +(0,-.5) -- +(0,.5);
\end{tikzpicture}
\end{equation}
These elements automatically satisfy the Serre relations, since the space of diagrams of the correct winding degree has no elements of scaling degree 2.  Thus, in order to show that we have a homomorphism of Lie algebras, we only need to choose the factors $a_i,b_i$ so that these elements satisfy the relations of $\mathfrak{sl}_2$, which we know on abstract grounds to be possible.  In particular, we can always fix $b_i=1$ without loss of generality.

In the case of primary interest to us, $k=2,n=4$, calculations too long to include here confirm that:  
\begin{theorem}  When $k=2,n=4$ and $a_1=a_2=a_3=-1$, 
    there is a unique isomorphism of Poisson algebras $\C[T^*X]\to e\mathring{R}e$ satisfying \eqref{homomorphism}.  
\end{theorem}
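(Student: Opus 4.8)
The plan is to exploit the structure already assembled above. Recall that $\C[T^*X]\cong\C[\overline{\mathcal{N}}(\lambda)]$ is a graded quotient of $\Sym(\mathfrak{sl}_4)$, is generated in scaling degree $2$ by the copy of $\mathfrak{sl}_4$ spanned by the matrix entries $e_{i,j}$, and on this degree-$2$ subspace the Poisson bracket is the Lie bracket. Therefore a Poisson isomorphism $\C[T^*X]\to e\mathring{R}e$ is the same datum as a Lie algebra isomorphism $\psi\colon\mathfrak{sl}_4\xrightarrow{\ \sim\ }(e\mathring{R}e)_2$ onto the scaling-degree-$2$ part, together with the fact that the induced algebra map $\Sym(\mathfrak{sl}_4)\to e\mathring{R}e$ kills the defining ideal of $\overline{\mathcal{N}}(\lambda)$; any algebra isomorphism agreeing with such a Lie isomorphism on these generators is then automatically Poisson, by the Leibniz rule. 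So first I would fix $b_i=1$ (as allowed by the remark preceding the theorem) and put $E_i=a_iD_i$, $F_i=D_i'$ for the two diagrams $D_i,D_i'$ of \eqref{homomorphism}, which sit in the one-dimensional winding-degree $\pm\alpha_i$ pieces of $(e\mathring{R}e)_2$.

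Next I would check the Chevalley--Serre relations for the $E_i,F_i$. The Serre relations and $\{E_i,F_j\}=0$ for $i\neq j$ are automatic: the relevant winding degrees $2\alpha_i+\alpha_j$, $\alpha_i+2\alpha_j$ and $\alpha_i-\alpha_j$ are not roots of $\mathfrak{sl}_4$, and by the winding/scaling degree bookkeeping there are no scaling-degree-$2$ elements of $e\mathring{R}e$ in those degrees. The one genuine computation is the $\mathfrak{sl}_2$-triple at each node: $\{E_i,F_i\}$ has winding degree $0$ and scaling degree $2$, so it is a Cartan element $H_i=a_iG_i$ with $G_i:=\{D_i,D_i'\}$ depending only on the diagrams, and $\{H_i,E_i\}$ lies in the line $\C D_i$, say $\{G_i,D_i\}=c_iD_i$, so that $\{H_i,E_i\}=a_i^2c_iD_i$; the relation $\{H_i,E_i\}=2E_i$ then forces $a_ic_i=2$, and the assertion to be verified is that $c_i=-2$, i.e. $a_i=-1$. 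I would run this computation inside the quantization $A_\hbar^{b_L,b_R}$: compose the two diagrams of \eqref{homomorphism} in both orders, simplify using the KLRW relations \crefrange{first-QH}{red-triple} together with the $\hbar$-deformed dot-slide and red-strand relations, take the commutator, reduce mod $\hbar$ to extract the Poisson bracket, and repeat once more to read off $c_i$. This diagrammatic bookkeeping---keeping track of signs, of dots sliding past the seam $x=0$, and of the polynomials $p_j$---is the main obstacle; everything else is formal. (It is also worth recording that the $H_i$ produced this way are linearly independent, which re-confirms that $\psi$ hits the Cartan.)

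Once the relations hold, $E_i\mapsto a_iD_i$, $F_i\mapsto D_i'$ extends to a Lie homomorphism $\psi\colon\mathfrak{sl}_4\to(e\mathring{R}e)_2$, which is nonzero, hence injective since $\mathfrak{sl}_4$ is simple, hence surjective, since the degree count gives $\dim(e\mathring{R}e)_2=12+3=15=\dim\mathfrak{sl}_4$. To upgrade $\psi$ to a map on $\C[T^*X]$ I would invoke the abstract Poisson isomorphism $\phi_0\colon\C[T^*X]\to e\mathring{R}e$ of \Cref{th:NT-iso}: then $\theta:=(\phi_0|_2)^{-1}\circ\psi$ is a Lie automorphism of $\mathfrak{sl}_4$, and since $\operatorname{Aut}(\mathfrak{sl}_4)$ is generated by $PGL(4)$-conjugation and $x\mapsto-x^T$, both of which preserve Jordan type and hence the orbit closure $\overline{\mathcal{N}}(\lambda)$, the map $\theta$ preserves the defining ideal and induces a Poisson automorphism $\Theta$ of $\C[T^*X]$ with $\Theta|_2=\theta$; then $\phi:=\phi_0\circ\Theta$ is a Poisson isomorphism satisfying \eqref{homomorphism}. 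Uniqueness is then clear: any Poisson isomorphism satisfying \eqref{homomorphism} agrees with $\psi$ on the Chevalley generators, hence on all of $(\C[T^*X])_2=\mathfrak{sl}_4$, hence everywhere because $\C[T^*X]$ is generated in scaling degree $2$.
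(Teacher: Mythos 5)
Your proposal reproduces the reduction that the paper already sets up in the paragraphs preceding the theorem (the scaling--degree-2 part is a copy of $\mathfrak{sl}_4$, the images of $e_{i,i\pm1}$ are forced up to scalars by the winding/scaling bigrading, the Serre-type relations are automatic for degree reasons, one may fix $b_i=1$, and everything reduces to the structure constant $c_i$ in $\{\{D_i,D_i'\},D_i\}=c_iD_i$), together with a sensible argument for promoting a degree-2 Lie isomorphism to a Poisson isomorphism and for uniqueness. The genuine gap is that you never carry out the one step that is the actual content of the theorem: the verification that $c_i=-2$, equivalently that the sign is $a_i=-1$ rather than some other scalar. You explicitly defer this (``I would run this computation inside the quantization\dots the main obstacle''), but this diagrammatic computation --- composing the two monopole diagrams of \eqref{homomorphism} in both orders in $A_{\hbar}^{b_L,b_R}$, simplifying with the relations (\ref{first-QH}--\ref{red-triple}), (\ref{a-dot-slide}--\ref{x-cost-1}), and extracting the bracket mod $\hbar$ --- is precisely the ``very delicate computation'' the theorem is recording; without it you have shown only that \emph{some} choice of scalars with prescribed products $a_ib_i$ works, not that $a_i=-1$ does.

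A secondary, repairable issue: your list of ``automatic'' Chevalley--Serre relations omits the cross relations $\{H_i,E_j\}=a_{ij}E_j$, $\{H_i,F_j\}=-a_{ij}F_j$ for $i\neq j$ and $\{H_i,H_j\}=0$. These are not forced by the winding/scaling bookkeeping: for adjacent $i,j$ the bracket $\{H_i,E_j\}$ lies in the one-dimensional space $\C D_j$, but the scalar must be checked to equal the Cartan integer, and similarly the winding-0 brackets are not killed by degree alone. The clean fix is the one implicit in the paper: use the abstractly known Poisson isomorphism of \Cref{th:NT-iso}, which by the bigrading must send $e_{i,i+1}\mapsto\alpha_iD_i$, $e_{i+1,i}\mapsto\beta_iD_i'$ for some nonzero scalars; then a choice $(a_i,b_i)$ works if and only if $a_ib_i=\alpha_i\beta_i=2/c_i$, since the two assignments differ by a torus rescaling of the Chevalley generators, which is an automorphism of $\mathfrak{sl}_4$ exactly under that condition. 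This again reduces everything to the value of $c_i$, so the missing computation cannot be avoided.
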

This seems to suggest that we can take $a_i=-1$ in all cases, but the confirmation of this sign is a very delicate computation, so we will leave more general confirmation of this pattern to another time.  

\subsubsection{Resolutions}
Since thinking about resolutions is also important, we wish to more explicitly identify the resolution $\tilde{\fM}$.  We know that it must be $T^*\Gr(n-k,n)$, since that is the only symplectic resolution of $\overline{\mathcal{N}}(\lambda)$.  However, it is not immediately obvious, for example, which ample line bundle we recover.  Let $\mathcal{O}(1)$ denote the canonical ample line bundle on $\tilde{\fM}$ induced by its construction as a $\Proj$.

We can do this by defining a deformed version $e \mathring{T}^1_{\hbar} e$ of the module $e \mathring{T}^1 e$, which is a bimodule over $A_{\hbar}^{b_L+1,b_R}\operatorname{-}A_{\hbar}^{b_L,b_R}$.  We can define this by applying the local relations (\ref{first-QH}--\ref{red-triple},\ref{a-dot-slide}), with some extra care to the red/black bigon relations:  instead of \eqref{cost}, we apply \eqref{x-cost-1} with the values $b_L$ and $b_R$ below the $y$-value where the red strand $r_L$ crosses the seam at $x=0$, and $b_L+1$ and $b_R$ above this point.  We can tensor together $m$ these bimodules for suitable choices of parameters to obtain a quantization $e \mathring{T}^m_{\hbar} e$ of the module $e \mathring{T}^m e$.

By \cite[Prop. 8.12]{WebcohII}, this is the natural deformation of $\Gamma(T^*X;\mathcal{O}(m))$  to a bimodule $\Gamma(X;\mathcal{O}(m)\otimes_{\mathcal{O}}\mathscr{D}_{b_L,b_R})$ over the corresponding rings of twisted differential operators.  Using \Cref{lem:twist-fix}, we can see that if $\mathscr{D}_{b_L,b_R}$ is the ring of differential operators twisted by $\mathcal{L}^{\ell}$, then $\mathscr{D}_{b_L+m,b_R}$ is twisted by $\mathcal{L}^{\ell+m}$.  This is only possible if:
\begin{lemma}
	There is an isomorphism $\tilde{\fM}\cong T^*X$ which induces an isomorphism $\mathcal{O}(1)\cong \mathcal{L}$.  
\end{lemma}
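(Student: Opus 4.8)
The plan is to pin down the ample generator $\mathcal{O}(1)$ of $\operatorname{Pic}(\tilde{\fM})$ by matching the twist of differential operators it carries against the twist carried by $\mathcal{L}$. Concretely, recall from \Cref{lem:twist-fix} that $A_{\hbar}^{b_L,b_R}$ is isomorphic to $\Gamma(X;\mathscr{D}_{\mathcal{L}^{\ell}})$ precisely when $b_L-b_R=\ell+n-k+1$. The cited result \cite[Prop.~8.12]{WebcohII} identifies the quantized twist-degree-$m$ bimodule $e\mathring{T}^m_{\hbar}e$ with $\Gamma\bigl(X;\mathcal{O}(m)\otimes_{\mathcal{O}}\mathscr{D}_{b_L,b_R}\bigr)$, a bimodule over $A_{\hbar}^{b_L+m,b_R}$ on the left and $A_{\hbar}^{b_L,b_R}$ on the right. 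So the strategy is: feed the $b_L\mapsto b_L+m$ shift through \Cref{lem:twist-fix} to see that $\mathscr{D}_{b_L+m,b_R}$ is the sheaf of differential operators twisted by $\mathcal{L}^{\ell+m}$, and conclude that tensoring by $\mathcal{O}(m)$ intertwines $\mathscr{D}$-twists by $\mathcal{L}^{\ell}$ and $\mathcal{L}^{\ell+m}$.

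First I would fix an identification $\tilde{\fM}\cong T^*X$ (which exists since $T^*X$ is the unique symplectic resolution of $\overline{\mathcal{N}}(\lambda)$, as recorded after \Cref{th:NT-iso}), and let $\mathcal{M}$ be the line bundle on $T^*X$ corresponding to $\mathcal{O}(1)$ under this identification; since $\operatorname{Pic}(T^*X)\cong\Z$ is generated by $\mathcal{L}$, we have $\mathcal{M}\cong\mathcal{L}^{d}$ for some $d\in\Z$, and $\mathcal{O}(1)$ being the $\Proj$-ample bundle forces $d\geq 1$. Next I would run the comparison: on one hand $\Gamma(T^*X;\mathcal{O}(m))$ deforms to the bimodule $\Gamma(X;\mathcal{L}^{dm}\otimes_{\mathcal{O}}\mathscr{D}_{b_L,b_R})$, and tensoring by a line bundle shifts the twist of $\mathscr{D}$ additively, so the left action factors through $\Gamma(X;\mathscr{D}_{\mathcal{L}^{\ell+dm}})$. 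On the other hand, \cite[Prop.~8.12]{WebcohII} says this same bimodule has left action $A_{\hbar}^{b_L+m,b_R}$, which by \Cref{lem:twist-fix} is $\Gamma(X;\mathscr{D}_{\mathcal{L}^{\ell+m}})$. Comparing twists (using that the TDO twist determines $\ell$, e.g. via its parabolic Verma module as in the discussion preceding \Cref{lem:twist-fix}) gives $dm=m$ for all $m\geq 0$, hence $d=1$ and $\mathcal{O}(1)\cong\mathcal{L}$.

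The main obstacle is making the bookkeeping of twisted differential operators genuinely rigorous rather than heuristic: one must be careful that ``tensoring $\mathscr{D}_{b_L,b_R}$ by a line bundle shifts the twist additively'' is stated for the correct normalization, that the identification $\operatorname{Pic}(\tilde{\fM})\cong\operatorname{Pic}(X)\cong\Z$ behaves well under $\pi^*$ (so that $\mathcal{O}(1)$ really is a power of $\mathcal{L}$ with a well-defined exponent), and that the deformation statements from \cite{WebcohII} are being invoked with parameters matching \Cref{lem:twist-fix}. A secondary subtlety is the $\mathcal{L}^{\ell}$ versus $\mathcal{L}^{\ell}$-under-inverse-transpose ambiguity in \Cref{lem:twist-fix}: one should note that both branches give the same conclusion $\mathcal{O}(1)\cong\mathcal{L}$ because they differ by an automorphism of $X$ fixing $\operatorname{Pic}(X)$ up to sign, and the positivity of $\mathcal{O}(1)$ selects the positive generator regardless. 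Once these normalizations are fixed, the argument is a short chase through \Cref{lem:twist-fix} and \cite[Prop.~8.12]{WebcohII}.
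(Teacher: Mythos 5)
Your proposal is correct and follows essentially the same route as the paper: both identify the quantized bimodule $e\mathring{T}^m_{\hbar}e$ with $\Gamma(X;\mathcal{O}(m)\otimes_{\mathcal{O}}\mathscr{D}_{b_L,b_R})$ via \cite[Prop.~8.12]{WebcohII} and then compare the left-hand TDO twist, $\mathcal{L}^{\ell+m}$ from \Cref{lem:twist-fix} applied to $b_L+m$, against the twist induced by tensoring with $\mathcal{O}(1)^{\otimes m}$, forcing $\mathcal{O}(1)\cong\mathcal{L}$. Your explicit bookkeeping with the exponent $d$ (and the remark about the inverse-transpose branch) is just a more detailed write-up of the same comparison the paper makes implicitly.
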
  
Thus, we have obtained the usual Pl\"ucker embedding of $T^*X$.  We can recover the $\mathfrak{sl}_n$-equivariant structure on $\mathcal{L}$ by generalizing the adjoint action of $\mathfrak{sl}_n$ on $U(\mathfrak{sl}_n)$.  
This defines a $\mathfrak{sl}_n$ action on $e \mathring{T}^1_{\hbar} e$ by the action $X\cdot t=\frac{1}{\hbar}(Xt-tX)$.  
This action preserves the scaling degree and thus preserves the subspace $V$ of elements of scaling degree 0 in $e \mathring{T}^1_{\hbar} e$.  
\begin{lemma}
	As a $\mathfrak{sl}_n$-module, we have an isomorphism $V\cong \bigwedge{}^{k}\C^n\cong \Gamma(X;\mathcal{L})$. 
\end{lemma}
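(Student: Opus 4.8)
The plan is to pin $V$ down by its $\C$-dimension and its weights. Since $\deg\hbar=2$ while elements of $V$ have scaling degree $0$, no positive power of $\hbar$ occurs, so $V$ is the $\C$-span of the scaling-degree-$0$, twist-degree-$1$ diagrams in $e\mathring{T}^1_\hbar e$. By \Cref{lem:twisting-minimal} with $\ell=1$ these are the $\{0,1\}$-fillings of the $k\times(n-k)$ grid satisfying $a(i-1,j)\le a(i,j)\le a(i,j-1)$, each determined by the (lower-left) staircase region of its $1$'s, so $\dim_\C V=\binom nk=\dim\wedgek\C^n$; moreover, by the remark following \Cref{lem:twisting-minimal} these fillings biject explicitly with the Gelfand--Tsetlin basis of the irreducible $L(\omega_k)$.

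Next I would check that $X\cdot t=\tfrac1\hbar(Xt-tX)$ genuinely defines an $\mathfrak{sl}_n$-action on $V$. Here $X$ denotes a scaling-degree-$2$ element acting on the left as an element of $A_\hbar^{b_L+1,b_R}$ and on the right as the corresponding element of $A_\hbar^{b_L,b_R}$; since both algebras reduce mod $\hbar$ to $e\mathring{R}e$ and $e\mathring{T}^1 e$ is a module over that commutative ring, we have $Xt-tX\in\hbar\cdot e\mathring{T}^1_\hbar e$, so $\tfrac1\hbar[X,\cdot]$ is defined. As $\tfrac1\hbar[\cdot,\cdot]$ makes the scaling-degree-$2$ part into $\mathfrak{gl}_n$ (hence $\mathfrak{sl}_n$), and the operator preserves scaling degree (because $\deg X=2=\deg\hbar$), this is an $\mathfrak{sl}_n$-action preserving $V$.

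The core step is to read off the weights. The Cartan is carried into the span of the dot operators $\bullet_i$, and $\tfrac1\hbar[\bullet_i,t]$ is computed by sliding a dot once around the cylinder --- picking up $\pm\hbar$ at the seam via \eqref{a-dot-slide} and red-strand contributions via \eqref{x-cost-1} --- and the resulting scalar expresses the weight of $t$ as an explicit affine function of the winding degree of $t$ (a root-lattice element, per \Cref{th:NT-iso}) together with $b_L,b_R$ and the twist $1$. Since the twist is $1$, all weights of $V$ lie in the coset $\omega_k+Q$ of the root lattice $Q$; the all-one filling is annihilated by every raising operator $e_{i,i+1}$ (adding $\alpha_i$ to its already-maximal diagonal sums leaves no scaling-degree-$0$, twist-degree-$1$ diagram), and the affine formula evaluates its weight to $\omega_k$. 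Hence $V$ contains a copy of $L(\omega_k)$, and since $\dim_\C V=\binom nk=\dim L(\omega_k)$ we conclude $V\cong L(\omega_k)\cong\wedgek\C^n\cong\Gamma(X;\mathcal{L})$, the last isomorphism being Borel--Weil.

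The main obstacle is exactly this last computation: pinning down the affine shift precisely enough to be certain the weights lie in $\omega_k+Q$ and that the all-one filling has weight $\omega_k$, i.e.\ verifying that the dot- and strand-sliding rules reproduce the diagonal Gelfand--Tsetlin formulas under the bijection above (one could push this further and recover the full off-diagonal formulas, making the isomorphism completely explicit). An alternative route that sidesteps the combinatorics is geometric: using \cite[Prop. 8.12]{WebcohII} together with the identification $\mathcal{O}(1)\cong\mathcal{L}$ proved above, realize $e\mathring{T}^1_\hbar e\cong\Gamma(X;\mathcal{L}\otimes_{\mathcal{O}}\mathscr{D}_{b_L,b_R})$, observe that its scaling-degree-$0$ part is the space of order-$0$ sections $\Gamma(X;\mathcal{L})$, and note that $\tfrac1\hbar[\cdot,\cdot]$ restricts there to the tautological $\mathfrak{sl}_n$-action from $GL_n$-equivariance of $\mathcal{L}$; in this route the delicate point is checking the $\mathfrak{sl}_n$-equivariance of the identification.
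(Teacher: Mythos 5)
Your overall skeleton matches the paper's: use \Cref{lem:twisting-minimal} with $\ell=1$ to get $\dim_{\C} V=\binom{n}{k}$, observe that the all-winding-number-one diagram is a maximal vector, and conclude by comparing with $L(\omega_k)$. But there is a genuine gap at exactly the point you flag as ``the main obstacle'': you never actually establish that this maximal vector has weight $\omega_k$ (nor that the weights of $V$ lie in $\omega_k+Q$). Your proposal defers this to an explicit computation of the Cartan action by sliding dots around the seam and evaluating an ``affine formula'' in $b_L,b_R$ and the twist, a computation you do not carry out; your alternative geometric route likewise postpones the decisive check (equivariance of the identification with $\Gamma(X;\mathcal{L}\otimes_{\mathcal O}\mathscr{D}_{b_L,b_R})$). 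Without pinning the highest weight, knowing $\dim V=\binom{n}{k}$ and the existence of a maximal vector does not force $V\cong \wedgek\C^n$, so as written the argument is incomplete rather than wrong.

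The paper's proof shows how to avoid that computation entirely. Since $V$ is a finite-dimensional $\mathfrak{sl}_n$-module, the value $\langle\lambda,\alpha_i^\vee\rangle$ at the highest weight $\lambda$ is detected by the adjacent weight spaces, and these are read off directly from the winding grading (which \Cref{th:NT-iso} identifies with the root lattice): no element of $V$ has a strand of winding number $>1$, so the all-ones diagram is maximal; the weight space $V_{\lambda-\alpha_i}$ is nonzero only for $i=k$ (only one strand can drop its winding number from $1$ to $0$ while staying in $V$), and $V_{\lambda-2\alpha_i}=0$ (no strand can have winding number $-1$). This forces $\lambda=\omega_k$ with no reference to $b_L,b_R$ or dot-sliding. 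One then gets an equivariant map $\wedgek\C^n\to V$, injective by simplicity of $\wedgek\C^n$, and an isomorphism by the dimension count you already have. If you want to salvage your route, you would need to actually derive the affine weight formula from \eqref{a-dot-slide} and \eqref{x-cost-1}; the combinatorial vanishing argument above is the cheaper and cleaner way to close the gap.
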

\begin{proof}
	The diagram where all strands have winding number 1 around the cylinder (in the case of $k=2, n=4$, this is $\wcoor_{12}$) is a highest weight vector since there is no element of $V$ where a strand with winding number $>1$ appears.  Let $\lambda$ be its weight.  There is a non-zero element of the weight $\lambda-\alpha_i$ only when $i=k$, given by the diagram where a single strand with label $k$ has winding number 0 and all the others are 1 (in the case of $k=2, n=4$, this is $\wcoor_{13}$).  Furthermore, there is also no non-zero element of weight $\lambda-2\alpha_i$ (no diagram in $V$ where a strand with label $k$ has winding number $-1$).  This shows that the weight $\lambda=\omega_k$ and we have an equivariant map $\wedgek \C^n\to V$ that must be injective by the simplicity of $\wedgek \C^n$.  On the other hand, by \Cref{lem:twisting-minimal}, we have that $\dim V=\binom{n}{k}$, so this map is an isomorphism.  
\end{proof}

\begin{lemma}
    Under any $\C^*\times SL_n$ equivariant isomorphism $\tilde \fM$, the subspace $V$ maps isomorphically to $\Gamma(X;\mathcal{L})$.  In particular, the cover of $T^*X$ and $X$ obtained by inverting non-zero weight vectors in $V$ is the usual affine cover of discussed in Section \ref{sec:TGr}.
\end{lemma}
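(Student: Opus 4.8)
The plan is to characterize $V$ intrinsically as a scaling weight space and then do the matching computation on the geometric side. First I would recall that $e\mathring{T}^1e$ is, by construction, the twist-degree-$1$ part of $\coul=e\mathbf R e$, so it is $\Gamma(\tilde\fM;\mathcal O(1))$; by \Cref{th:NT-iso} its scaling grading is exactly the grading by the weight-$2$ scaling $\C^*$-action on $\tilde\fM\cong T^*X$, and by the lemma identifying $\mathcal O(1)$ with $\mathcal L$ the pair $(\tilde\fM,\mathcal O(1))$ is isomorphic to $(T^*X,\mathcal L)$. Since $SL_n$ has no nontrivial characters and the $\C^*$-equivariant structure on an ample line bundle is pinned down once its minimal weight is normalized to be $0$, any $\C^*\times SL_n$-equivariant isomorphism $\tilde\fM\cong T^*X$ promotes to a $\C^*\times SL_n$-equivariant isomorphism $e\mathring{T}^1 e\cong\Gamma(T^*X;\mathcal L)$. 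Under any such isomorphism the scaling-degree-$0$ subspace $V$ must map onto the $\C^*$-weight-$0$ subspace of $\Gamma(T^*X;\mathcal L)$.

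Second, I would compute that weight subspace directly. Since $\pi\colon T^*X\to X$ is affine with $\pi_*\mathcal O_{T^*X}=\Sym^{\bullet}_{\mathcal O_X}\Theta_X$ (where $\Theta_X$ is the tangent sheaf), the projection formula gives $\Gamma(T^*X;\mathcal L)=\bigoplus_{j\ge 0}\Gamma(X;\mathcal L\otimes\Sym^j\Theta_X)$, and the scaling $\C^*$, which contracts the cotangent fibres, acts on the $j$-th summand with weight $2j$ (cf.\ \eqref{eq:ext1}). Hence the weight-$0$ part is exactly the $j=0$ summand $\Gamma(X;\mathcal L)$, and the equivariant isomorphism restricts to an isomorphism $V\xrightarrow{\ \sim\ }\Gamma(X;\mathcal L)$. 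As a consistency check, both sides are then isomorphic to the irreducible $SL_n$-module $\wedgek\C^n$, which re-proves the preceding lemma.

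Third, for the ``in particular'' clause I would argue as follows. Under $V\cong\Gamma(X;\mathcal L)\cong\wedgek\C^n$, the weight vectors of $V$ for the diagonal torus $T\subset SL_n$ correspond to the $T$-eigenvectors of $\wedgek\C^n$, which are precisely the nonzero scalar multiples of the standard vectors $e_{i_1}\wedge\cdots\wedge e_{i_k}$, i.e.\ of the Plücker coordinates $\wcoor_I$ for $I$ a $k$-element subset. Inverting such a section means passing to its non-vanishing locus, on which $\mathcal L$ is trivialized; on $X$ this is $U_I=X\setminus\{\wcoor_I=0\}$, and on $T^*X$ it is $\pi^{-1}(U_I)$. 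Letting $I$ vary recovers exactly the covers $\mathcal U$ and $\mathcal U'$ of \Cref{sec:TGr}.

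The one real subtlety — and the step I would be most careful with — is the claim that the equivariant isomorphism matches the gradings with no overall shift, equivalently that the $\C^*\times SL_n$-equivariant structures on $\mathcal O(1)$ and on $\mathcal L$ agree rather than differing by a character. I would settle this by noting that both graded objects $e\mathring{T}^1 e$ and $\Gamma(T^*X;\mathcal L)$ have minimal weight $0$ (for the former because all scaling degrees occurring in $\mathring T^1$ are nonnegative, as in the proof of \Cref{lem:twisting-minimal}; for the latter from the pushforward computation above), while $\operatorname{Hom}(\C^*\times SL_n,\C^*)\cong\Z$, so the two equivariant structures can differ only by a $\C^*$-weight shift, which the minimal-weight-$0$ normalization then forces to be zero.
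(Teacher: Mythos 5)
Your argument takes a genuinely different route from the paper's, and most of its components are sound: the computation that the scaling-weight-$0$ part of $\Gamma(T^*X;\mathcal{L})$ is $\Gamma(X;\mathcal{L})$ via $\pi_*\mathcal{O}_{T^*X}\cong\Sym^{\bullet}\Theta_X$ with $\Sym^j$ in weight $2j$, the character-shift normalization for the equivariant structures, and the identification of the torus-weight vectors of $V$ with the Pl\"ucker coordinates (which gives the affine cover) are all fine, and the last point is more explicit than anything in the paper. The genuine gap is your opening step: you assert that $e\mathring{T}^1e$ ``is, by construction,'' $\Gamma(\tilde{\fM};\mathcal{O}(1))$. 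Nothing in the construction gives this. For a graded $\coul$-module $M$ the paper only has the natural map $M^m\to\Gamma(\tilde{\fM};\Loc{M}\otimes\mathcal{L}^m)$, and it is explicitly asserted to be an isomorphism only for $m\gg 0$; in twist degree $1$ it is not known at this stage to be injective (the diagrammatically defined ring could a priori have irrelevant torsion) nor surjective (it could fail to be saturated in low degrees). Your whole proof funnels through this identification---you need $V$, the weight-$0$ subspace of the twist-degree-$1$ piece, to map injectively into the weight-$0$ part of $\Gamma(T^*X;\mathcal{L})$---so the missing statement is essentially the content of the lemma rather than an input to it.

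The paper's proof is built precisely to avoid this. It only uses the maps $\Sym^r(V)\to\Gamma(T^*X;\mathcal{L}^r)$, which are under control in large degree, together with the non-vanishing of $r$-th powers, to conclude that each nonzero element of $V$ defines a nonzero section of $\mathcal{L}$ on $T^*X$; since such a section has scaling degree $0$, it is constant along the cotangent fibres and therefore restricts to a nonzero element of $\Gamma(X;\mathcal{L})$; the resulting map $V\to\Gamma(X;\mathcal{L})$ is $SL_n$-equivariant and nonzero, hence injective because $V\cong\wedgek\C^n$ is irreducible by the preceding lemma, and it is an isomorphism by the dimension count $\dim V=\binom{n}{k}$ from \Cref{lem:twisting-minimal}. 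To repair your version you would need to prove injectivity on $V$ of the degree-one localization map (surjectivity then follows from the dimension count), for instance by exactly such a power/large-degree argument---at which point you have essentially reproduced the paper's proof.
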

\begin{proof}
    We have a map $\Sym^{r}(V)\to \Gamma(T^*X;\mathcal{L}^r)$, which for all $r$ is surjective onto the elements of scaling degree 0.  In particular, the $r$th power of an element of $V$ never vanishes.  This shows that the elements of $V$ define non-zero sections of $\Gamma(T^*X;\mathcal{L})$.  Since they have scaling degree 0, they are constant on the cotangent fibers, and thus are non-zero after restriction to $X$.  The induced map to $\Gamma(X;\mathcal{L})$ is $SL_n$ equivariant and not zero, so it must be injective.  We already know that these vector spaces have the same dimension, so the induced map must be an isomorphism.  
\end{proof}
In particular, in the case $k=2$ and $n=4$,  we can explicitly find diagrams $\coord_{ij}$ which must agree up to scalar with the wedges $e_{ij}=e_i\wedge e_j$, since they are the only elements in $V$ with the correct winding grading.  These diagrams are:
\begin{align*}
&&\coord_{13}&=\begin{array}{c}  
 \tikz[very thick,xscale=2,yscale=2]{
            \draw (-1,-1)-- (1,-1);
            \draw (-1,1)-- (1,1);
            \draw[dashed] (1,1)-- (1,-1);
          \draw[dashed] (-1,-1)-- (-1,1);
           \draw[red] (-.67,-1).. controls (-.67,-1) and (-.67,-.2).. node[below, at start]{$r_L$} (1,.6);
            \draw (-.34,-1) .. controls (-.34,-1) and (-.34,-0.4) ..node[below, at start]{$1$} (1,0.2);
            \draw (0,-1) .. controls (0,-1) and (0,-.6) ..node[below, at start]{$2$} (1,-.2);
            \draw (.33 ,-1).. controls (.33,-1) and (.33,-.8).. node[below, at start]{$3$} (1,-0.6);
           \draw[red] (.67,-1)--node[below, at start]{$r_R$} 
          (.67,1);
          \draw[red] (-1,.6).. controls (-1,.6) and (-.835,.6).. (-.67,1);
          \draw (-1,0.2).. controls (-1,0.2) and (-.66,.2).. node[above, at end]{$1$} (-.33,1);
          \draw (-1,-0.2).. controls (-1,-0.2) and (-0.5,-0.2).. node[above, at end]{$2$} (0,1);
          \draw (-1,-.6).. controls (-1,-.6) and (-.33,-.6).. node[above, at end]{$3$} (.33,1);
          \draw (0,1)-- (0,-1);
          }
   \end{array}&
    &&\coord_{14}&=\begin{array}{c}  
 \tikz[very thick,xscale=2,yscale=2]{
            \draw (-1,-1)-- (1,-1);
            \draw (-1,1)-- (1,1);
            \draw[dashed] (1,1)-- (1,-1);
          \draw[dashed] (-1,-1)-- (-1,1);
           \draw[red] (-.67,-1).. controls (-.67,-1) and (-.67,-.2).. node[below, at start]{$r_L$} (1,.6);
            \draw (-.34,-1) .. controls (-.34,-1) and (-.34,-0.4) ..node[below, at start]{$1$} (1,0.2);
            \draw (0,-1) .. controls (0,-1) and (0,-.6) ..node[below, at start]{$2$} (1,-.2);
            \draw (.33 ,-1)--node[above, at end]{$3$} node[below, at start]{$3$} (0.33,1);
           \draw[red] (.67,-1)--node[below, at start]{$r_R$} 
          (.67,1);
          \draw[red] (-1,.6).. controls (-1,.6) and (-.835,.6).. (-.67,1);
          \draw (-1,0.2).. controls (-1,0.2) and (-.66,.2).. node[above, at end]{$1$} (-.33,1);
          \draw (-1,-0.2).. controls (-1,-0.2) and (-0.5,-0.2).. node[above, at end]{$2$} (0,1);
          \draw (0,1)-- (0,-1);
          }
   \end{array}&
\end{align*}
\begin{align*}
&&\coord_{23}&=\begin{array}{c}  
 \tikz[very thick,xscale=2,yscale=2]{
            \draw (-1,-1)-- (1,-1);
            \draw (-1,1)-- (1,1);
            \draw[dashed] (1,1)-- (1,-1);
          \draw[dashed] (-1,-1)-- (-1,1);
           \draw[red] (-.67,-1).. controls (-.67,-1) and (-.67,-.2).. node[below, at start]{$r_L$} (1,.6);
            \draw (-.34,-1)--node[below, at start]{$1$} node[above, at end]{$1$} (-.34,1);
            \draw (0,-1) .. controls (0,-1) and (0,-.6) ..node[below, at start]{$2$} (1,-.2);
            \draw (.33 ,-1).. controls (.33,-1) and (.33,-.8).. node[below, at start]{$3$} (1,-0.6);
           \draw[red] (.67,-1)--node[below, at start]{$r_R$} 
          (.67,1);
          \draw[red] (-1,.6).. controls (-1,.6) and (-.835,.6).. (-.67,1);
          \draw (-1,-0.2).. controls (-1,-0.2) and (-0.5,-0.2).. node[above, at end]{$2$} (0,1);
          \draw (-1,-.6).. controls (-1,-.6) and (-.33,-.6).. node[above, at end]{$3$} (.33,1);
          \draw (0,1)-- (0,-1);
          }
   \end{array}&
    &&\coord_{24}&=\begin{array}{c}  
 \tikz[very thick,xscale=2,yscale=2]{
            \draw (-1,-1)-- (1,-1);
            \draw (-1,1)-- (1,1);
            \draw[dashed] (1,1)-- (1,-1);
          \draw[dashed] (-1,-1)-- (-1,1);
           \draw[red] (-.67,-1).. controls (-.67,-1) and (-.67,-.2).. node[below, at start]{$r_L$} (1,.6);
            \draw (-.34,-1)--node[below, at start]{$1$}node[above, at end]{$1$} (-.34,1);
            \draw (0,-1) .. controls (0,-1) and (0,-.6) ..node[below, at start]{$2$} (1,-.2);
            \draw (.33 ,-1)--node[above, at end]{$3$} node[below, at start]{$3$} (0.33,1);
           \draw[red] (.67,-1)--node[below, at start]{$r_R$} 
          (.67,1);
          \draw[red] (-1,.6).. controls (-1,.6) and (-.835,.6).. (-.67,1);
          \draw (-1,-0.2).. controls (-1,-0.2) and (-0.5,-0.2).. node[above, at end]{$2$} (0,1);
          \draw (0,1)-- (0,-1);
          }
   \end{array}&
\end{align*}
\begin{align*}
&&\coord_{12}&=\begin{array}{c}  
 \tikz[very thick,xscale=2,yscale=2]{
            \draw (-1,-1)-- (1,-1);
            \draw (-1,1)-- (1,1);
            \draw[dashed] (1,1)-- (1,-1);
          \draw[dashed] (-1,-1)-- (-1,1);
           \draw[red] (-.67,-1).. controls (-.67,-1) and (-.67,-.2).. node[below, at start]{$r_L$} (1,.6);
            \draw (-.34,-1) .. controls (-.34,-1) and (-.34,-0.4) ..node[below, at start]{$1$} (1,0.2);
            \draw (0,-1) .. controls (0,-1) and (0,-.6) ..node[below, at start]{$2$} (1,-.2);
            \draw (.33 ,-1).. controls (.33,-1) and (.33,-.8).. node[below, at start]{$3$} (1,-0.6);
           \draw[red] (.67,-1)--node[below, at start]{$r_R$} 
          (.67,1);
          \draw[red] (-1,.6).. controls (-1,.6) and (-.835,.6).. (-.67,1);
          \draw (-1,0.2).. controls (-1,0.2) and (-.66,.2).. node[above, at end]{$1$} (-.33,1);
          \draw (-1,-0.2).. controls (-1,-0.2) and (-0.5,-0.2).. node[above, at end]{$2$} (0,1);
          \draw (-1,-.6).. controls (-1,-.6) and (-.33,-.6).. node[above, at end]{$3$} (.33,1);
          }
   \end{array}&
    &&\coord_{34}&=\begin{array}{c}  
 \tikz[very thick,xscale=2,yscale=2]{
            \draw (-1,-1)-- (1,-1);
            \draw (-1,1)-- (1,1);
            \draw[dashed] (1,1)-- (1,-1);
          \draw[dashed] (-1,-1)-- (-1,1);
           \draw[red] (-.67,-1).. controls (-.67,-1) and (-.67,-.2).. node[below, at start]{$r_L$} (1,.6);
            \draw (-.34,-1)--node[below, at start]{$1$}node[above, at end]{$1$} (-.34,1);
            \draw (.33 ,-1)--node[above, at end]{$3$} node[below, at start]{$3$} (0.33,1);
           \draw[red] (.67,-1)--node[below, at start]{$r_R$} 
          (.67,1);
          \draw[red] (-1,.6).. controls (-1,.6) and (-.835,.6).. (-.67,1);
          \draw (0,1)--node[above, at start]{$2$} node[below, at end]{$2$} (0,-1);
          }
   \end{array}&
\end{align*}
In fact, scalars are not needed;  in order to check this, we only confirm that the Pl\"ucker relation 
\begin{equation*}
    D_{12}D_{34}-D_{13}D_{24}+D_{14}D_{23}=0
\end{equation*}
holds, rather than some other relation between scalar multiples of these diagrams. This calculation proceeds as follows:
\begin{align*}
  \begin{array}{c}  
 \tikz[very thick,xscale=2,yscale=3]{
            \draw (-1,-1)-- (1,-1);
            \draw (-1,1)-- (1,1);
          \draw[dashed] (-1,-1)-- (-1,1);
           \draw[red] (-.8,-1) .. controls (-.8,-1) and (-.8,-.33) ..node[below, at start]{$r_L$} (1,.33);
            \draw (0,-1) .. controls (0,-1) and (0,-0.67) ..node[below, at start]{$2$} (1,-.33);
            \draw (-0.45,-1) .. controls (-.45,-1) and (-.45,-0.5) ..node[below, at start]{$1$} 
          (1,0);
           \draw (.45,-1) .. controls (.45,-1) and (.45,-.83) ..node[below, at start]{$3$}
          (1,-.67);
          \draw[red] (.8 ,-1)--node[below, at start]{$r_R$} (.8,1);
          \draw[red] (-1,0.66) .. controls (-1,0.66) and (-0.8,0.66) .. (-0.8,1);
          \draw (-1,0) .. controls (-1,0) and (-.45,0) ..node[above, at end]{$1$} (-.45,1);
          \draw (-1,-.33) .. controls (-1,-.33) and (.25,.33) ..node[above, at end]{$2$} (0,1);
          \draw (-1,-.67) .. controls (-1,-0.5) and (.725,0) ..node[above, at end]{$3$} (0.45,1);
          \draw[dashed] (1,1)-- (1,-1);
          \draw[red] (-1,.33)-- (1,.66); 
          }
   \end{array}-  \begin{array}{c}  
 \tikz[very thick,xscale=2,yscale=3]{
            \draw (-1,-1)-- (1,-1);
            \draw (-1,1)-- (1,1);
          \draw[dashed] (-1,-1)-- (-1,1);
           \draw[red] (-.8,-1) .. controls (-.8,-1) and (-.8,-.43) ..node[below, at start]{$r_L$} (1,.14);
            \draw (-0.45,-1) .. controls (-.45,-1) and (-.45,-0.57) ..node[below, at start]{$1$} (1,-.14);
            \draw (0,-1) .. controls (0,-1) and (0,-0.73) ..node[below, at start]{$2$} (1,-.43);
           \draw (.45,-1) .. controls (.45,-1) and (.45,-.86) ..node[below, at start]{$3$}
          (1,-.71);
          \draw[red] (.8 ,-1)--node[below, at start]{$r_R$} (.8,1);
          \draw[red] (-1,0.73) .. controls (-1,0.73) and (-0.8,0.73) .. (-0.8,1);
          \draw (-1,-.14) .. controls (-1,-.14) and (-.45,-.14) ..node[above, at end]{$1$} (-.45,1);
          \draw (-1,.43) .. controls (-1,.43) and (-.5,.43) ..node[above, at end]{$2$} (0,1);
          \draw (-1,-.71) .. controls (-1,-.71) and (.725,-.15) ..node[above, at end]{$3$} (0.45,1);
          \draw[dashed] (1,1)-- (1,-1);
          \draw (-1,-0.43)-- (1,0.43); 
          \draw[red] (-1,.14)-- (1,.73); 
          \draw (0,-1)-- (0,1) 
          }
   \end{array}+\begin{array}{c}  
 \tikz[very thick,xscale=2,yscale=3]{
            \draw (-1,-1)-- (1,-1);
            \draw (-1,1)-- (1,1);
          \draw[dashed] (-1,-1)-- (-1,1);
           \draw[red] (-.8,-1) .. controls (-.8,-1) and (-.8,-.57) ..node[below, at start]{$r_L$} (1,-.14);
            \draw (-0.45,-1) .. controls (-.45,-1) and (-.45,.25) ..node[below, at start]{$1$} (1,.43);
            \draw (0,-1) .. controls (0,-1) and (0,-0.73) ..node[below, at start]{$2$} (1,-.43);
           \draw (.45,-1) .. controls (.45,-1) and (.45,-.86) ..node[below, at start]{$3$}
          (1,-.71);
          \draw[red] (.8 ,-1)--node[below, at start]{$r_R$} (.8,1);
          \draw[red] (-1,0.73) .. controls (-1,0.73) and (-0.8,0.73) .. (-0.8,1);
          \draw (-1,.43) .. controls (-1,.43) and (-.45,.43) ..node[above, at end]{$1$} (-.45,1);
          \draw (-1,.14) .. controls (-1,.14) and (-.5,.14) ..node[above, at end]{$2$} (0,1);
          \draw (-1,-.71) .. controls (-1,-.71) and (.725,-.45) ..node[above, at end]{$3$} (0.45,1);
          \draw[dashed] (1,1)-- (1,-1);
          \draw (-1,-0.43)-- (1,0.14); 
          \draw[red] (-1,-.14)-- (1,.73); 
          \draw (0,-1)-- (0,1) 
          }
   \end{array} =
      \end{align*}
   \begin{align*}
      \begin{array}{c}  
 \tikz[very thick,xscale=1.75,yscale=3]{
            \draw (-1,-1)-- (1,-1);
            \draw (-1,1)-- (1,1);
          \draw[dashed] (-1,-1)-- (-1,1);
           \draw[red] (-.8,-1) .. controls (-.8,-1) and (-.8,-.33) ..node[below, at start]{$r_L$} (1,.33);
            \draw (0,-1) .. controls (0,-1) and (0,-0.67) ..node[below, at start]{$2$} (1,-.33);
            \draw (-0.45,-1) .. controls (-.45,-1) and (-.45,-0.5) ..node[below, at start]{$1$} 
          (1,0);
           \draw (.45,-1) .. controls (.45,-1) and (.45,-.83) ..node[below, at start]{$3$}
          (1,-.67);
          \draw[red] (.8 ,-1)--node[below, at start]{$r_R$} (.8,1);
          \draw[red] (-1,0.66) .. controls (-1,0.66) and (-0.8,0.66) .. (-0.8,1);
          \draw (-1,0) .. controls (-1,0) and (-.45,0) ..node[above, at end]{$1$} (-.45,1);
          \draw (-1,-.33) .. controls (-1,-.33) and (.25,.33) ..node[above, at end]{$2$} (0,1);
          \draw (-1,-.67) .. controls (-1,-0.5) and (.725,0) ..node[above, at end]{$3$} (0.45,1);
          \draw[dashed] (1,1)-- (1,-1);
          \draw[red] (-1,.33)-- (1,.66); 
          }
   \end{array}-\begin{array}{c}  
 \tikz[very thick,xscale=1.75,yscale=3]{
            \draw (-1,-1)-- (1,-1);
            \draw (-1,1)-- (1,1);
          \draw[dashed] (-1,-1)-- (-1,1);
           \draw[red] (-.8,-1) .. controls (-.8,-1) and (-.8,-.57) ..node[below, at start]{$r_L$} (1,-.14);
            \draw (-0.45,-1) .. controls (-.45,-1) and (-.45,.25) ..node[below, at start]{$1$} (1,.43);
            \draw (0,-1) .. controls (0,-1) and (0,-0.73) ..node[below, at start]{$2$} (1,-.43);
            \draw (0,-1) .. controls (0,0) and (0,.14) .. (1,.14); 
           \draw (.45,-1) .. controls (.45,-1) and (.45,-.86) ..node[below, at start]{$3$} (1,-.71);
          \draw[red] (.8 ,-1)--node[below, at start]{$r_R$} (.8,1);
          \draw[red] (-1,0.73) .. controls (-1,0.73) and (-0.8,0.73) .. (-0.8,1);
          \draw (-1,.43) .. controls (-1,.43) and (-.45,.43) ..node[above, at end]{$1$} (-.45,1);
          \draw (-1,.14) .. controls (-1,.14) and (-.5,.14) ..node[above, at end]{$2$} (0,1);
          \draw (-1,.-.43) .. controls (-1,-.43) and (-.5,-.43) .. (0,1); 
          \draw (-1,-.71) .. controls (-1,-.71) and (.725,-.45) ..node[above, at end]{$3$} (0.45,1);
          \draw[dashed] (1,1)-- (1,-1);
          \draw[red] (-1,-.14)-- (1,.73); 
          }
   \end{array}
   =\begin{array}{c}  
 \tikz[very thick,xscale=1.75,yscale=3]{
            \draw (-1,-1)-- (1,-1);
            \draw (-1,1)-- (1,1);
          \draw[dashed] (-1,-1)-- (-1,1);
           \draw[red] (-.8,-1) .. controls (-.8,-1) and (-.8,-.33) ..node[below, at start]{$r_L$} (1,.33);
            \draw (0,-1) .. controls (0,-1) and (0,-0.67) ..node[below, at start]{$2$} (1,-.33);
            \draw (-0.45,-1) .. controls (-.45,-1) and (-.45,-0.5) ..node[below, at start]{$1$} 
          (1,0);
           \draw (.45,-1) .. controls (.45,-1) and (.45,-.83) ..node[below, at start]{$3$}
          (1,-.67);
          \draw[red] (.8 ,-1)--node[below, at start]{$r_R$} (.8,1);
          \draw[red] (-1,0.66) .. controls (-1,0.66) and (-0.8,0.66) .. (-0.8,1);
          \draw (-1,0) .. controls (-1,0) and (-.45,0) ..node[above, at end]{$1$} (-.45,1);
          \draw (-1,-.33) .. controls (-1,-.33) and (.25,.33) ..node[above, at end]{$2$} (0,1);
          \draw (-1,-.67) .. controls (-1,-0.5) and (.725,0) ..node[above, at end]{$3$} (0.45,1);
          \draw[dashed] (1,1)-- (1,-1);
          \draw[red] (-1,.33)-- (1,.66); 
          }
   \end{array}-\begin{array}{c}  
 \tikz[very thick,xscale=1.75,yscale=3]{
            \draw (-1,-1)-- (1,-1);
            \draw (-1,1)-- (1,1);
          \draw[dashed] (-1,-1)-- (-1,1);
           \draw[red] (-.8,-1) .. controls (-.8,-1) and (-.8,-.5) ..node[below, at start]{$r_L$} (1,0);
            \draw (0,-1) .. controls (0,-1) and (0,-0.67) ..node[below, at start]{$2$} (1,-.33);
            \draw (-0.45,-1) .. controls (-.45,-1) and (-.45,-0.66) ..node[below, at start]{$1$} (1,0.33);
           \draw (.45,-1) .. controls (.45,-1) and (.45,-.83) ..node[below, at start]{$3$}
          (1,-.67);
          \draw[red] (.8 ,-1)--node[below, at start]{$r_R$} (.8,1);
          \draw[red] (-1,0.66) .. controls (-1,0.66) and (-0.8,0.66) .. (-0.8,1);
          \draw (-1,0.33) .. controls (-1,0.33) and (-.45,0.33) ..node[above, at end]{$1$} (-.45,1);
          \draw (-1,-.33) .. controls (-1,-.33) and (.25,0) ..node[above, at end]{$2$}(0,1);
          \draw (-1,-.33) .. controls (-1,.15) and (.25,0) ..node[midway]{\Large$\bullet$} (0,1);
          \draw (-1,-.67) .. controls (-1,-0.5) and (.725,0) ..node[above, at end]{$3$} (0.45,1);
          \draw[dashed] (1,1)-- (1,-1);
          \draw[red] (-1,0).. controls (-1,0) and (-1,.4).. (1,.66); 
          }
   \end{array}=0.
\end{align*}
This shows that in the case of $k=2,n=4$:
\begin{lemma}
    We can choose the isomorphism $T^*X\cong \tilde{\fM}$ so that the pullback of $e_{ij}$ is $D_{ij}$.
\end{lemma}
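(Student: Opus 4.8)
The plan is to read this off from the structures already assembled. Fix a $\C^*\times SL_4$-equivariant isomorphism $T^*X\cong\tilde{\fM}$ carrying $\cO(1)$ to $\cL$, as produced by the preceding lemmas, and let $\psi^*$ denote the induced isomorphism of homogeneous coordinate rings. By those lemmas $\psi^*$ restricts to an $SL_4$-equivariant isomorphism $V\to\Gamma(X;\cL)$, and this isomorphism carries the winding grading on $V$ to the weight grading on $\Gamma(X;\cL)$ (these correspond under \Cref{th:NT-iso}). Since $D_{ij}$ is the unique element of $V$ of its winding degree and $e_{ij}=e_i\wedge e_j$ is the unique vector of $\Gamma(X;\cL)$ of the corresponding weight $\epsilon_i+\epsilon_j$, we must have $\psi^*(D_{ij})=c_{ij}\,e_{ij}$ for scalars $c_{ij}\in\C^\times$, and the task becomes to choose $\psi$ so that all $c_{ij}=1$.

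Next I would pin down the one constraint on the $c_{ij}$. Applying the ring homomorphism $\psi^*$ to the relation $D_{12}D_{34}-D_{13}D_{24}+D_{14}D_{23}=0$ established in the computation above gives
\[
c_{12}c_{34}\,e_{12}e_{34}-c_{13}c_{24}\,e_{13}e_{24}+c_{14}c_{23}\,e_{14}e_{23}=0
\]
in $\Gamma(T^*X;\cL^{2})$. But the $e_{ij}$ also satisfy \eqref{plucker}, and since the degree-$2$ component of the homogeneous ideal of the Pl\"ucker embedding of $\Gr(2,4)$ is spanned by \eqref{plucker} alone, the three products $e_{12}e_{34}$, $e_{13}e_{24}$, $e_{14}e_{23}$ admit exactly a one-dimensional space of linear relations. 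Comparing the two relations forces the three products $c_{12}c_{34}$, $c_{13}c_{24}$, $c_{14}c_{23}$ to coincide.

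Finally I would absorb the $c_{ij}$ using the freedom still available in $\psi$: we may compose with the action of a diagonal matrix $t=(t_1,\dots,t_4)\in SL_4$, which multiplies each $c_{ij}$ by $t_it_j$ subject to $\prod_i t_i=1$ (replacing the $SL_4$-equivariant structure by a conjugate one, which is equally admissible), and we may rescale the identification $\cO(1)\cong\cL$ by $\nu\in\C^\times$, which multiplies every $c_{ij}$ by $\nu$. A short computation shows that the tuples of multipliers $(\nu t_it_j)_{i<j}$ so obtained form exactly the subgroup of tuples $(m_{ij})_{i<j}\in(\C^\times)^{6}$ satisfying $m_{12}m_{34}=m_{13}m_{24}=m_{14}m_{23}$; hence this subgroup contains $(c_{ij}^{-1})_{i<j}$ precisely because of the coincidence found in the previous step, and for the corresponding choice of $t$ and $\nu$ the resulting isomorphism identifies each $D_{ij}$ with $e_{ij}$. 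The only genuinely substantive input is the Pl\"ucker identity among the diagrams $D_{ij}$, which is exactly the diagrammatic computation carried out just before the lemma; the remaining steps --- the weight matching, the one-dimensionality of the degree-$2$ part of the Pl\"ucker ideal, and the solvability of the scalar system --- are routine, so I do not expect a real obstacle.
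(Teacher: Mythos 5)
Your argument is correct and follows the same route as the paper: the winding/weight uniqueness gives $\psi^*(D_{ij})=c_{ij}e_{ij}$, and the substantive input is exactly the diagrammatic verification of the Pl\"ucker relation among the $D_{ij}$ carried out just before the lemma. You simply make explicit the scalar-normalization step (that the relation forces $c_{12}c_{34}=c_{13}c_{24}=c_{14}c_{23}$, which the diagonal torus in $SL_4$ together with rescaling $\cO(1)\cong\cL$ can then absorb) that the paper summarizes with ``scalars are not needed.''
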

 
\section{Vector bundles on $T^*\Gr(2,4)$}
\subsection{General procedure}
We can use the isomorphism above to construct a very interesting collection of vector bundles on $T^*X$ via transport of structure from $\tilde{\fM}$.  For any graded module $M$ over the projective coordinate ring $\coul$, we can define its localization $\Loc{M}$.  We can construct this explicitly as follows.  

Given $f\in \coul^m=e\mathring{T}^me$, the scheme $\Proj(\coul)$ has an open subset $U_f=\Spec(\coul_f^0)$ given by the spectrum of the degree 0 part of the localization (in the sense of commutative algebra);  by definition,  $\Proj(\coul)$ is the colimit of these open subschemes over the natural inclusions $U_{fg}\subset U_f$.  Given a module $M$, we can define $\Loc{M}$ by describing its pullback to $U_f$ for each $f$, together with pullback maps under the inclusions above.  We do this by localizing the $\coul_f^0$-module $\mathbf{M}_f^0$.  Note that this means that we have a natural map $M^m\to \Gamma(\tilde\fM; \Loc{M}\otimes \mathcal{L}^m)$; if $M$ is finitely generated, this map will be an isomorphism for $m\gg 0$.

The most natural choice of $\coul$-module (beyond $\coul$ itself) is the module $e\mathbf{R}\cong \bigoplus_{m=0}^{\infty}e\mathring{T}^m$.  
\begin{theorem}
    [\mbox{\cite[Thm. 8.14]{WebcohII}}]\label{th:Z-tilting}
    The vector bundle $\tilting=\Loc{e\mathbf{R}}$ is a tilting generator and agrees with the tilting generator constructed in \cite{KalDEQ}.    
\end{theorem}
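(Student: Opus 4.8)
The plan is to deduce the theorem from the quantization picture assembled above, using that Kaledin's construction of tilting generators on conical symplectic resolutions proceeds through reduction to characteristic $p$ and that $\tilde\fM\cong T^*X$ is precisely such a resolution. First I would dispatch the structural claims. The sheaf $\tilting=\Loc{e\mathbf{R}}$ is coherent because $e\mathbf{R}=\bigoplus_{m\ge 0}e\mathring{T}^m$ is a finitely generated graded module over the projective coordinate ring $\coul$; it is locally free because on each standard affine chart $U_f=\Spec(\coul^0_f)$ of $\Proj(\coul)$ the degree-zero localization $(e\mathbf{R})^0_f$ is a finitely generated projective $\coul^0_f$-module, and this is the one place where the concrete combinatorics of the cylindrical KLRW algebra and its twisted bimodules is used, via an explicit basis of diagrams adapted to inverting $f$. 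Since $\Gamma(\tilde\fM;\tilting\otimes\cL^m)=(e\mathbf{R})^m$ for $m\gg 0$, no information in $e\mathbf{R}$ is lost in passing to $\tilting$.

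For the tilting property itself I would deform. By \cite[Prop.~8.12]{WebcohII}, together with the identification of $A_\hbar^{b_L,b_R}$ with $\cL$-twisted differential operators from \Cref{lem:twist-fix}, the deformed bimodules $e\mathring{T}^m_\hbar$ assemble into a filtered deformation $\tilting_\hbar$ of $\tilting$ that is a sheaf of modules over the quantization $\widetilde{\mathscr{D}}_{b_L,b_R}$ of $\tilde\fM$, with $\Gamma(\tilde\fM;\widetilde{\mathscr{D}}_{b_L,b_R})=A_\hbar^{b_L,b_R}$. Reducing the base ring modulo a large prime $p$ and specializing $\hbar$, for an appropriate flavor parameter the sheaf $\widetilde{\mathscr{D}}_{b_L,b_R}$ becomes an Azumaya algebra over the Frobenius twist $(\tilde\fM)^{(1)}$ and $\tilting_\hbar$ becomes a splitting bundle for it of minimal rank. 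The localization theorem in the style of Bezrukavnikov--Mirkovi\'c--Rumynin, in the symplectic-resolution form used by Kaledin, then gives a derived equivalence $\mathbb{R}\Gamma\colon D^b(\widetilde{\mathscr{D}}_{b_L,b_R}\mmod)\xrightarrow{\sim}D^b(A_\hbar^{b_L,b_R}\mmod)$, which Morita transport through the splitting bundle rewrites as $\mathbb{R}\operatorname{Hom}(\tilting,-)\colon D^b(\mathsf{Coh}(\tilde\fM))\xrightarrow{\sim}D^b(\End(\tilting)^{\operatorname{op}}\mmod)$; in particular $\Ext^{>0}(\tilting,\tilting)=0$ and $\tilting$ generates, in characteristic $p$. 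Finally one lifts to characteristic $0$ by a flatness and base-change argument as in \cite{KalDEQ}: $\tilting_\hbar$ is flat over the deformation parameter, $\Ext^{>0}$-vanishing is an open condition, and the tilting property passes to the characteristic-$0$ fibre. (For the $\Ext$-vanishing in isolation one could instead compute $H^{>0}(\tilde\fM;\tilting^\vee\otimes\tilting)$ from the graded module $\operatorname{Hom}_\coul(e\mathbf{R},e\mathbf{R})$ using the contracting $\C^*$-action, but the generation statement still appears to need the route above.)

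The comparison with \cite{KalDEQ} is then essentially formal: Kaledin's tilting generator is itself a minimal splitting bundle for the characteristic-$p$ quantization, lifted back to characteristic $0$, and any two such bundles differ only by operations that preserve the set of tilting generators---a line-bundle twist and equiconstitution. Since $\tilting_\hbar$ has been manufactured to be precisely such a splitting bundle, $\tilting$ lies in the same equivalence class as Kaledin's, which is all the theorem asserts.

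The main obstacle is the middle step: matching the purely combinatorial module $e\mathbf{R}$, and its deformation $\bigoplus_m e\mathring{T}^m_\hbar$, with an honest splitting bundle for the reduced quantization, and checking that the reduced $\widetilde{\mathscr{D}}_{b_L,b_R}$ really is Azumaya of the correct degree. This forces one to pin the quantization down geometrically---through the isomorphism $A_\hbar\cong\Gamma(X;\mathscr{D}_{b_L,b_R})$ and the explicit diagrams $\coord_{ij}$ realizing the Pl\"ucker coordinates---and it is essentially the technical heart of \cite[\S8]{WebcohII}; the bookkeeping on either side of it is routine.
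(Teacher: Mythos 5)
The paper offers no argument for this statement at all: it is imported verbatim from \cite[Thm.~8.14]{WebcohII}, so the ``proof'' here is the citation, and there is nothing internal to compare your write-up against. Your sketch does correctly reconstruct the strategy of the cited proof --- deform $e\mathbf{R}$ via the bimodules $e\mathring{T}^m_\hbar$, reduce modulo a large prime so that the quantization becomes Azumaya on the Frobenius twist, identify the localized diagrammatic module as a splitting bundle, apply localization in the style of Bezrukavnikov--Mirkovi\'c--Rumynin/Kaledin, lift back to characteristic $0$, and compare with \cite{KalDEQ} using the fact that on $T^*X$ there is essentially one such Azumaya algebra --- but, as you say yourself, the technical core (Azumaya property of the reduced quantization and the matching of $\Loc{e\mathbf{R}}$ with a splitting bundle, including the multi-parameter bookkeeping needed because $e\mathring{T}^m_\hbar$ is a bimodule over algebras with shifted flavor parameters) is exactly the content of \cite[\S 8]{WebcohII}, so what you have is an outline of the cited proof rather than an independent or self-contained one. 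That is consistent with how the present paper treats the result, namely as a black box.
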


Note that Kaledin only canonically constructs the Azumaya algebra $\tilting\otimes \tilting^{\vee}$; for general symplectic resolutions, different choices of quantization parameters in characteristic $p$ will give different Azumaya algebras, but there is only one such algebra in the case of $T^*X$.  Choosing idempotents other than $e$ in $\mathbf{R}$ can give other tilting generators, as does tensoring with line bundles.

We can write $1\in \mathbf{R}$ as a sum of idempotents corresponding to words, so $\tilting$ decomposes into the sum of the images of these idempotents, which are $\tilting^{\Bi}=\Loc{e\mathbf{R}e(\mathbf{i})}$. By \cite[Lem. 2.9]{websterCoherentSheaves2019}, we have:
\begin{lemma}\label{lem:tilting-rank}
    The sheaf $\tilting^{\Bi}$ for $i_1^{(a_1)}\cdot i_r^{(a_r)}$ is a vector bundle of rank $\frac{v_1!\cdots v_{n-1}!}{a_1!\cdots a_r!}$.  In particular, if $n=4$ and $k=2$, then this rank is $2$ if all $a_i$'s are 1, and $1$ otherwise.
\end{lemma}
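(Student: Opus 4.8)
The plan is to obtain the general rank formula essentially by quoting \cite[Lem.~2.9]{websterCoherentSheaves2019} and then to read off the $n=4$, $k=2$ case. Recall that $\tilting^{\Bi}=\Loc{e\mathbf{R}e(\Bi)}$ where $e=e(\Bi_0)$ for the fully merged word $\Bi_0=\textcolor{red}{(n-k)}1^{(v_1)}\cdots(n-1)^{(v_{n-1})}\textcolor{red}{k}$; in particular $\tilting^{\Bi_0}=\Loc{\coul}=\cO$, which already matches the claimed rank $1$ since there $a_s=v_\ell$ for all blocks. Since the rank of a localization $\Loc{M}$ of a finitely generated graded $\coul$-module may be computed after restricting to any nonempty affine chart $U_f=\Spec(\coul_f^0)$, it suffices to compute the rank of $(e\mathbf{R}e(\Bi))_f^0$ over $\coul_f^0$, i.e.\ to work over the function field of $\tilde{\fM}$.

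Over that generic locus the thick-strand calculus of \cite{khovanovExtendedGraphical2012} takes over: a bundle of $v_\ell$ thin black strands of label $\ell$ is isomorphic, in the relevant Karoubian category, to $v_\ell!$ copies of the single thick strand of thickness $v_\ell$, and more generally a grouping of those strands into the blocks of thicknesses prescribed by $\Bi$ is isomorphic to $v_\ell!/\prod_{s:\, i_s=\ell}a_s!$ copies of that thick strand. Applying this label by label should identify, after inverting a suitable homogeneous section, the module $e(\Bi_0)\mathbf{R}e(\Bi)$ with a free $\coul_f^0$-module of rank $\prod_\ell v_\ell!/\prod_s a_s!=(v_1!\cdots v_{n-1}!)/(a_1!\cdots a_r!)$; the interleaved red strands do not interfere here because their only effect is through the red/black bigon relations, which become invertible over the generic locus. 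This is precisely the content of \cite[Lem.~2.9]{websterCoherentSheaves2019}, and it gives $\rk\tilting^{\Bi}=(v_1!\cdots v_{n-1}!)/(a_1!\cdots a_r!)$.

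For the last sentence, specialize to $n=4$, $k=2$, where $\Bv=(1,2,1)$, so $v_1!\,v_2!\,v_3!=1\cdot 2\cdot 1=2$. The labels $1$ and $3$ each carry a single black strand, so their blocks are forced to have thickness $1$; the only label offering a choice is $2$, whose two black strands either form a single thick strand $2^{(2)}$, giving $\prod_s a_s!=2$ and $\rk\tilting^{\Bi}=1$, or remain two thickness-$1$ strands, giving $\prod_s a_s!=1$ and $\rk\tilting^{\Bi}=2$. Hence the rank is $2$ exactly when every $a_i=1$, and $1$ otherwise.

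I expect the main obstacle to be the middle paragraph: verifying carefully that the ``unmerging'' isomorphisms of the thick calculus genuinely lift to $\mathbf{R}$ and remain isomorphisms after localization, despite the red strands sitting among the black ones. If one would rather not reprove this, the cleanest route is to invoke \cite[Lem.~2.9]{websterCoherentSheaves2019} for the general formula as a black box and carry out only the $\Bv=(1,2,1)$ bookkeeping above, which is entirely elementary.
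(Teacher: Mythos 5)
Your proposal is correct and takes essentially the same route as the paper, which offers no independent argument: it simply cites \cite[Lem.~2.9]{websterCoherentSheaves2019} for the general rank formula and then does the same $\Bv=(1,2,1)$ bookkeeping for $n=4$, $k=2$. Your sketch of the thick-strand unmerging is extra detail the paper does not attempt, and your fallback of invoking the cited lemma as a black box is exactly what the paper does.
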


\subsection{The example of $k=1$}
\label{sec:Pn}

If $k=1$, and $n$ is arbitrary, then $X=T^*\mathbb{P}^{n-1}$ and $\Bw=(1,0,\dots,0,1), \Bv=(1,1,\dots, 1,1)$.  By \Cref{lem:tilting-rank}, the summands $\tilting^{\Bi}$ are all line bundles, and thus of the form $\mathcal{O}(a(\Bi))$ for some integer $a$.  Note that if there existed $\Bi,\Bj$ such that $a(\Bi)-a(\Bj)\geq n$, then we would have $\Ext^1(\tilting^{\Bi},\tilting^{\Bj})\cong H^1(T^*X; \cO(a(\Bj)-a(\Bi)))\neq 0$, so the bundle would not be tilting.  Thus, there must exist some integer $r$ such that $a(\Bi)\in \{r,r-1,\dots, r-n+1\}$ for all $\Bi$.  One can also see that every value in this interval must be realized as $a(\Bi)$, since otherwise the resulting bundle would not be a generator---the K-theory of $T^*\mathbb{P}^n$ is free of rank $n$ as an abelian group, and thus can't be generated by the classes of $<n$ line bundles.  

Thus, we need only establish how to compute $a$ and thus find $r$.  Note that $\tilting^{{\color{red}1}12\cdots n-1 {\color{red}n-1}}$ gives the structure sheaf, so we must have $r\geq 0$.  
\begin{definition}
Start at ${\color{red}1}$ and jump to the numbers $1,2,\dots,n-1$ and then to ${\color{red}n-1}$ in the word $\Bi$.
    Let $a'(\Bi)$ be the number of times this process jumps to the left.    
\end{definition}
We can also visualize this with the idempotent written on the cylinder as shown above, thinking of jumps to the left as going around the back of the cylinder so we always move in a positive direction around the circle---the statistic $a'(\Bi)$ will be the winding number of our path around the circle.

For example, if $\Bi={\color{red}1}12\cdots n-1 {\color{red}n-1}$, then $a'(\Bi)=0$.  On the other hand, if $\Bj={\color{red}1}{\color{red}n-1}n-1 \cdots 21$, then every jump after the first is leftward, so $a'(\Bj)=n-1$.

\begin{theorem}
    $a(\Bi)=-a'(\Bi)$.
\end{theorem}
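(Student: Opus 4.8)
The plan is to compute the integer $a(\Bi)$ by identifying the line bundle $\tilting^{\Bi} = \Loc{e\mathbf{R}e(\Bi)}$ explicitly as a submodule of the field of fractions, using the fact that for $k=1$ all the relevant spaces of diagrams are at most one-dimensional in each (scaling, winding, twist) degree. First I would fix the base case: the idempotent $\Bi_0 = {\color{red}1}12\cdots(n-1){\color{red}(n-1)}$ has $e\mathbf{R}e(\Bi_0) = e\mathbf{R}e = \coul$ itself, so $\tilting^{\Bi_0} = \Oc$ and $a(\Bi_0) = 0 = -a'(\Bi_0)$. This anchors the computation and matches the fact, noted before Definition after \Cref{sec:Pn}, that $r\geq 0$.

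Next I would analyze how $\tilting^{\Bi}$ changes under an elementary move on the word $\Bi$ — swapping two cyclically adjacent strands. The key tool is that $\tilting^{\Bi}$ and $\tilting^{\Bj}$, when $\Bj$ differs from $\Bi$ by one transposition, are related by multiplication by the unique (up to scalar) diagram with the appropriate boundary and minimal degree; because $k=1$ these diagrams are one-dimensional in each degree, so the relation is multiplication by a monomial in the localized ring. Concretely, I would show that an adjacent transposition that is a ``leftward jump'' in the sense of the Definition (i.e.\ moving a strand past the seam $x=0$, increasing winding number) twists the localization by $\Oc(-1)$, while a ``rightward jump'' leaves it unchanged — or more precisely, that the composite over the whole word accumulates winding number $a'(\Bi)$ and hence the twist $\Oc(-a'(\Bi))$. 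This uses \Cref{lem:twisting-minimal} and \Cref{lem:twist-fix}: the twist degree of a diagram is detected by how many times strands wrap the cylinder, and the identification $\mathcal{O}(1)\cong\mathcal{L}$ fixes the sign convention so that one unit of winding past the seam corresponds to $\Oc(-1)$, not $\Oc(1)$. I would phrase this via the visualization already given: $a'(\Bi)$ is the winding number of the path that visits ${\color{red}1}, 1, 2,\dots,n-1,{\color{red}(n-1)}$ in order around the circle, and this winding number is exactly the power of $\mathcal{L}^{-1}$ by which the section module $e\mathbf{R}e(\Bi)$ is shifted relative to $e\mathbf{R}e(\Bi_0)$.

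More carefully, I would argue as follows: localize $e\mathbf{R}e(\Bi)$ at the non-vanishing locus of a chosen section (one of the $\coord_{ij}$-analogues, a monopole of minimal degree). On this affine chart, $e\mathbf{R}e(\Bi)$ becomes a rank-one free module over $\coul^0_f$, generated by the minimal-degree diagram $d_{\Bi}$ with bottom $\Bi$ and top $\Bi_0$ (or some fixed reference word). The key computation is the twist degree of $d_{\Bi}$: by the degree formulas in \Cref{sec:TGr} (the $k=1$ specialization of the tableau rules before \Cref{lem:twisting-minimal}), the minimal twist degree compatible with boundary $\Bi$ is governed by how the red strand ${\color{red}1}$ must wind to reach its position, which is precisely $a'(\Bi)$. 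Matching twist degree $m$ with $\Gamma(\tilde\fM;\Loc{M}\otimes\mathcal{L}^m)$ as in the paragraph before \Cref{th:Z-tilting}, and using $\mathcal{O}(1)\cong\mathcal{L}$, this says $\Loc{e\mathbf{R}e(\Bi)} \cong \mathcal{L}^{-a'(\Bi)} = \Oc(-a'(\Bi))$, giving $a(\Bi) = -a'(\Bi)$ as claimed.

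The main obstacle will be pinning down the sign and making the identification of twist degree with $a'(\Bi)$ fully rigorous rather than heuristic — i.e.\ showing the minimal twisting degree of a diagram connecting $\Bi$ to the reference word is exactly $a'(\Bi)$ and not, say, $n-1-a'(\Bi)$ or some affine shift. This requires carefully tracking the cyclic orders at top and bottom and invoking the ``never make a bigon'' condition on red strands from the definition of twisted diagrams, together with the observation (already used in the proof that the $\coord_{ij}$ satisfy the Plücker relation) that one can normalize so scalars do not intervene. Once the diagram combinatorics is set up with the cylinder picture, the winding number interpretation of $a'$ makes this a bookkeeping exercise, but getting the orientation conventions consistent with $\mathcal{O}(1)\cong\mathcal{L}$ is the delicate point. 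I would also double-check against the two extreme cases in the excerpt: $\Bi_0$ gives $a = 0$ (consistent), and $\Bj = {\color{red}1}{\color{red}(n-1)}(n-1)\cdots 21$ gives $a'(\Bj) = n-1$, hence $a(\Bj) = -(n-1)$, so the full interval $\{0,-1,\dots,-(n-1)\}$ of Beilinson's generator $\Oc\oplus\Oc(-1)\oplus\cdots\oplus\Oc(-n+1)$ is realized — matching the abstract argument that all $n$ values must occur.
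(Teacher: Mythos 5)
Your strategy---attach to $\Bi$ a minimal-twist diagram and read the twist off from it---is in the right spirit, but the step you defer as ``bookkeeping'' is actually the entire content, and as written the argument has a genuine gap. Knowing that the smallest twisting degree in which $e\mathbf{R}e(\Bi)$ is nonzero equals $a'(\Bi)$ does not determine the line bundle: the natural map $M^m\to \Gamma(\tilde\fM;\Loc{M}\otimes\mathcal{L}^m)$ is an isomorphism only for $m\gg 0$, and on $T^*\mathbb{P}^{n-1}$ every line bundle $\mathcal{O}(j)$ has nonzero global sections (coming from $\Gamma(\mathbb{P}^{n-1};\operatorname{Sym}^{\bullet}T_{\mathbb{P}^{n-1}}\otimes\mathcal{O}(j))$), so ``lowest degree of a nonzero element'' is not an invariant that pins down $a(\Bi)$. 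Similarly, exhibiting a free generator of the localized module on a single chart $U_f$ identifies nothing, since every line bundle is trivial on an affine chart; without transition data between at least two charts, or a global generation statement, the conclusion $\Loc{e\mathbf{R}e(\Bi)}\cong\mathcal{L}^{-a'(\Bi)}$ does not follow. Your first route (adjacent transpositions twisting by $\mathcal{O}(-1)$ or $\mathcal{O}$) has the same issue: each elementary move needs exactly the kind of factorization statement you only assert.

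What is missing is the paper's key step. One constructs the (unique up to isotopy) dot-free diagram $D$ of twisting degree $a'(\Bi)$ joining $\Bi$ to the Coulomb idempotent with no degree-one same-label crossings, and then proves that \emph{every} diagram $D'$ of twisting degree $\geq a'(\Bi)$ factors as $D'=D''D$; the paper does this by induction on ``corrected winding numbers'' (winding numbers measured relative to those of $D$), peeling off full positive or negative wraps of the cylinder. This factorization says that multiplication by $D$ surjects onto all sufficiently high twist degrees of $e\mathbf{R}e(\Bi)$, i.e.\ $D$ defines a surjective, hence bijective, map of line bundles $\mathcal{O}\to\tilting^{\Bi}\otimes\mathcal{O}(a'(\Bi))$, and it is this nowhere-vanishing global section that forces $a(\Bi)+a'(\Bi)=0$. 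Once that generation statement is in place, the sign and orientation issues you worry about are comparatively minor; without it, the minimal-degree and single-chart computations cannot close the argument.
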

\begin{proof}
    There is a unique diagram (up to isotopy) $D$ of twisting degree $a'(\Bi)$ which connects $\Bi$ to the Coulomb idempotent $e$ without any degree 1 crossing of strands.   We construct this by adding the strands with label $1,2,\dots$ in order so that the strand $A$ with label $s$ avoids crossing the strand $B$ with label $s-1$.  If $s$ comes before $s-1$ in $\Bi$, then this is achieved by sending this $A$ around the back of the cylinder, so it has winding number one lower than $B$.  This shows that the difference between the winding numbers of the red strands must be exactly $a'(\Bi)$. 

    Now consider a diagram $D'$ with twisting degree $\geq a'(\Bi)$.    For each strand, consider its winding number minus the winding number of the strand with the same label in $D$.  This ``corrected winding number'' has the property that for two strands with consecutive labels, the difference of corrected winding numbers tells us how many times that pair of strands intersects.
    
    Now we wish to show that $D'=D''D$ for some other twisted diagram $D''$---we prove this by induction on the difference between the maximum and minimum corrected winding numbers.  For simplicity, we can assume that this diagram is tightened, that is, no pair of strands makes a bigon and $D'$ has no dots, since these can easily be pulled into a factor like $D''$.  With these assumptions, if this difference between max and min is 0, then $D'=D$.  
    
    Let $b$ be the maximum of the corrected winding numbers.  If $b>0$, every strand with corrected winding number $b$ must have a positive crossing over every other strand, so we can isotope our diagram so that at $y=\nicefrac{1}{2}$, we have the Coulomb idempotent $e$, and the portion of the diagram above $y=\nicefrac{1}{2}$ has each strand with corrected winding number $b$ wrapping 1 time around the cylinder in the positive direction and each other strand wrapping 0 times.  Thus, if $b>0$, we can write $D'=D_1D_2$ with $D_2$ having lower difference between max and min corrected winding number; by induction $D_2=D''_2D$, so $D'=D_1D''_2D$.

    A symmetric argument shows that if the minimum corrected wrapping number is $<0$, we can pull off a similar factor.  This completes the proof that $D'=D''D$.

    This shows that $D$ is a non-vanishing section of $\tilting^{\Bi}\otimes \mathcal{O}(a'(\Bi))\cong \mathcal{O}(a(\Bi)+a'(\Bi))$, that is, it trivializes this bundle.  This is only possible if $a(\Bi)+a'(\Bi)=0$, completing the proof.  
\end{proof}
This allows us to directly compute what every summand of $\tilting$ is as a line bundle. Below we show examples of the diagram $D$ discussed in the above proof, whose twisting degree is $a'(\Bi)$, and the corresponding line bundle we obtain.
For $n=2$ we have:
\begin{align*}
   &&\begin{array}{c}  
 \tikz[very thick,xscale=2,yscale=2]{
            \draw (-1,-1)-- (1,-1);
            \draw (-1,1)-- (1,1);
            \draw[dashed] (1,1)-- (1,-1);
          \draw[dashed] (-1,-1)-- (-1,1);
           \draw[red] (-.5,-1)--node[below, at start]{$1$} (-.5,1);
            \draw (-0,-1)--node[below, at start]{$1$}node[above, at end]{\Large$\mathcal{O}$} (0,1);
           \draw[red] (.5,-1)--node[below, at start]{$1$} 
          (.5,1);
          }
   \end{array} && \begin{array}{c}  
 \tikz[very thick,xscale=2,yscale=2]{
            \draw (-1,-1)-- (1,-1);
            \draw (-1,1)-- (1,1);
            \draw[dashed] (1,1)-- (1,-1);
          \draw[dashed] (-1,-1)-- (-1,1);
           \draw[red] (0,-1)-- node[below, at start]{$1$} (1,0);
           \draw[red] (-1,0)-- (-.5,1);
            \draw (-0.5,-1)--node[below, at start]{$1$}node[above, at end]{\Large$\mathcal{O}(-1)$} (0,1);
           \draw[red] (.5,-1)--node[below, at start]{$1$} 
          (.5,1);
          }
   \end{array} 
\end{align*}
and for $n=3$ we have:
\begin{align*}
   &&\begin{array}{c}  
 \tikz[very thick,xscale=2,yscale=2]{
            \draw (-1,-1)-- (1,-1);
            \draw (-1,1)-- (1,1);
            \draw[dashed] (1,1)-- (1,-1);
          \draw[dashed] (-1,-1)-- (-1,1);
           \draw[red] (-.6,-1)--node[below, at start]{$1$} (-.6,1);
            \draw (-.2,-1)--node[below, at start]{$1$} (-.2,1);
            \path (0,-1)-- node[above, at end]{\Large$\mathcal{O}$} (0,1);
               \draw (.2,-1)--node[below, at start]{$2$} (.2,1);
           \draw[red] (.6,-1)--node[below, at start]{$2$} 
          (.6,1);
          }
   \end{array} && \begin{array}{c}  
 \tikz[very thick,xscale=2,yscale=2]{
            \draw (-1,-1)-- (1,-1);
            \draw (-1,1)-- (1,1);
            \draw[dashed] (1,1)-- (1,-1);
          \draw[dashed] (-1,-1)-- (-1,1);
           \draw[red] (-.6,-1)--node[below, at start]{$1$} (1,.33);
            \draw (.2,-1)--node[below, at start]{$1$} (1,-.33);
            \path (0,-1)-- node[above, at end]{\Large$\mathcal{O}(-1)$} (0,1);
            \draw (-.2,-1)--node[below, at start]{$2$} (.2,1);
            \draw[red] (-1,.33)-- (-.6,1);
            \draw (-1,-.33)-- (-.2,1);
           \draw[red] (.6,-1)--node[below, at start]{$2$} 
          (.6,1);
          }
   \end{array} &&
   \begin{array}{c}  
 \tikz[very thick,xscale=2,yscale=2]{
            \draw (-1,-1)-- (1,-1);
            \draw (-1,1)-- (1,1);
            \draw[dashed] (1,1)-- (1,-1);
          \draw[dashed] (-1,-1)-- (-1,1);
           \draw[red] (-.2,-1)--node[below, at start]{$1$} (1,.-.2);
            \draw (-.6,-1)--node[below, at start]{$1$} (1,.2);
            \path (0,-1)-- node[above, at end]{\Large$\mathcal{O}(-2)$} (0,1);
            \draw (.6,-1)--node[below, at start]{$2$} (1,-.6);
            \draw[red] (-1,.6)-- (-.6,1);
            \draw[red] (-1,-.2)-- (1,.6);
            \draw (-1,.2)-- (-.2,1);
            \draw (-1,-.6)-- (.2,1);
           \draw[red] (.2,-1)--node[below, at start]{$2$} (.6,1);
          }
   \end{array} 
\end{align*}
Since we know that $0$ and $n-1$ are realized as values of $a'(\Bi)$, we find immediately that $r=0$ and
\[\tilting\cong \cO\oplus \cO(-1)^{\oplus m_1}\oplus \cdots \cO(-n+1)^{\oplus m_{n-1}}\] where $m_s$ is the number of words with $s=a'(\Bi)$. \\
\Aiden{Should I Tex up any of the diagrams you mention for $n=3$ later this week? If so, which ones would be helpful?}

\subsection{Calculations}
Our main purpose in this paper is to compute the representations that appear as $\tilting^{\Bi}$ in the case of $n=4,k=2$.   Let us state one useful prepatory result.  Due to topological constraints, we have written our words linearly; when we discuss ``reordering'' a word, that implicitly includes moving the last entry to be the first and vice versa.  For example, we can obtain \textcolor{red}{2}1322\textcolor{red}{2} from \textcolor{red}{2}322\textcolor{red}{2}1 by ``reordering'' the 1 and the first \textcolor{red}{2}.  Standard results in KLRW algebras show that:
\begin{lemma}\label{wordlemma}
    \hfill
    \begin{enumerate}
        \item If we reorder two letters both of which are not black 2's, then $\tilting^{\Bi}$ is unchanged.
        \item $\tilting^{\cdots 22\cdots}\cong (\tilting^{\cdots 2^{(2)}\cdots})^{\oplus 2}$
        \item If we have a subword $2i2$ for $i\in \{1,3,{\color{red} 2}\}$, then 
        \[\tilting^{\cdots 2i2\cdots}\cong \tilting^{\cdots i2^{(2)}\cdots}\oplus \tilting^{\cdots 2^{(2)}i\cdots}\]
    \end{enumerate}
\end{lemma}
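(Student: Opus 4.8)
The plan is to reduce all three statements to the level of the left $e\mathbf{R}$-modules $e\mathbf{R}e(\Bi)$. Since $\tilting^{\Bi}=\Loc{e\mathbf{R}e(\Bi)}$ and the functor $\Loc{-}$ is additive and exact (being defined chart-by-chart as an ordinary module localization), it suffices to prove the corresponding module-theoretic assertions, each of which is a standard fact about KLR(W) algebras. The common mechanism is that swapping two cyclically adjacent letters of $\Bi$ to form a word $\Bi'$ is implemented by composing with the crossing of those two strands, giving left $e\mathbf{R}$-module maps $\psi\colon e\mathbf{R}e(\Bi')\to e\mathbf{R}e(\Bi)$ and $\bar\psi$ in the reverse direction; the composites $\psi\bar\psi$ and $\bar\psi\psi$ are right multiplication by the ``bigon'' obtained by crossing and immediately uncrossing the two strands. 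For (1): if both strands are black with labels in $\{1,3\}$, then $1\not\leftrightarrow 3$ in the $A_3$ quiver and the last (``otherwise'') case of \eqref{black-bigon} identifies the bigon with the bare idempotent; if one strand is black of label $\neq 2$ and the other is red (hence labelled $2$), the labels differ and the $i\neq j$ case of \eqref{cost} does the same; and if both are red they carry the same label and may simply be interchanged. In each case $\psi$ is invertible, so $\tilting^{\Bi}\cong\tilting^{\Bi'}$ after applying $\Loc{-}$.

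For (2): two cyclically adjacent black $2$-strands support, via their mutual crossing together with dots, an action of the rank-$2$ nilHecke algebra $\mathrm{NH}_2$ on the right of $e\mathbf{R}e(\cdots 22 \cdots)$. Using $\mathrm{NH}_2\cong\operatorname{Mat}_2(\Sym)$ from \cite[Prop. 3.5]{laudaCategorificationQuantum2010}, this right action splits $e\mathbf{R}e(\cdots 22 \cdots)$ into two conjugate, hence isomorphic, summands, each of which becomes $e\mathbf{R}e(\cdots 2^{(2)} \cdots)$ after passing to Khovanov's Morita-equivalent thick-strand calculus \cite{khovanovExtendedGraphical2012}; this is precisely the computation underlying \Cref{lem:tilting-rank}. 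Applying $\Loc{-}$ yields $\tilting^{\cdots 22 \cdots}\cong(\tilting^{\cdots 2^{(2)} \cdots})^{\oplus 2}$.

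Part (3) is the main content: it is a cylindrical, framed incarnation of the categorified Serre relation for the edge of $A_3$ joining $2$ and $i$. Working in the thick-strand calculus, one builds two diagrams out of $e(\cdots 2i2 \cdots)$ by pulling the left (respectively, right) black $2$-strand across the $i$-strand and merging it with the remaining $2$-strand via the splitter of (2); composing each such diagram with its vertical reflection and normalizing by the nilHecke idempotents of (2) produces elements $p_+,p_-\in e(\cdots 2i2 \cdots)\mathbf{R}e(\cdots 2i2 \cdots)$. The assertion of (3) is that $p_+,p_-$ are orthogonal idempotents with $p_++p_-=e(\cdots 2i2 \cdots)$, exhibiting respectively $e\mathbf{R}e(\cdots i2^{(2)} \cdots)$ and $e\mathbf{R}e(\cdots 2^{(2)}i \cdots)$ as summands of $e\mathbf{R}e(\cdots 2i2 \cdots)$. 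Its proof is a diagrammatic computation with the nilHecke relations \eqref{first-QH}, \eqref{nilHecke-1}, \eqref{black-bigon} on the two $2$-strands together with the triple-point relation \eqref{triple-dumb}, which encodes how the $i$-strand slides through a crossing of the two $2$-strands (the nonzero cases $j\to i=k$ and $j\leftarrow i=k$ of \eqref{triple-dumb} occur precisely because $2$ is adjacent to $i\in\{1,3\}$). When $i$ is the red strand $\textcolor{red}{2}$, substitute \eqref{cost} and \eqref{red-triple} for \eqref{black-bigon} and \eqref{triple-dumb}; the combinatorics is identical. Tensoring on both sides by the rest of $\Bi$ and applying $\Loc{-}$ gives (3).

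The step I expect to be the real obstacle is the sign and convention bookkeeping in (3): tracking Khovanov's half-twist convention when splitting thick strands, and the signs appearing in \eqref{black-bigon} and \eqref{triple-dumb} (a sign issue of the same flavor as the one flagged for the sections $A_i$ above), so that $p_\pm$ come out as honest orthogonal idempotents rather than merely a decomposition up to scalars. This can be partially sidestepped: the categorified Serre relation already supplies a short exact sequence $0\to e\mathbf{R}e(\cdots i2^{(2)} \cdots)\to e\mathbf{R}e(\cdots 2i2 \cdots)\to e\mathbf{R}e(\cdots 2^{(2)}i \cdots)\to 0$ of left $e\mathbf{R}$-modules, and applying the exact functor $\Loc{-}$ gives a short exact sequence of bundles on $T^*X$ whose outer terms $\tilting^{\cdots i2^{(2)} \cdots}$ and $\tilting^{\cdots 2^{(2)}i \cdots}$ are summands of the tilting generator $\tilting$; hence $\Ext^1_{T^*X}(\tilting^{\cdots 2^{(2)}i \cdots},\tilting^{\cdots i2^{(2)} \cdots})=0$ by \Cref{th:Z-tilting}, the sequence splits, and (3) follows without the explicit idempotent calculation.
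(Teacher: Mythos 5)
Your proposal is correct and is essentially the argument the paper intends: the paper gives no written proof, deferring to ``standard results in KLRW algebras,'' and those results are precisely your bigon argument via \eqref{black-bigon} and \eqref{cost} for (1), the nilHecke/thick-calculus splitting for (2), and the Serre-type decomposition driven by \eqref{triple-dumb} and \eqref{red-triple} for (3). Two brief cautions: the crossing maps you invoke must have twist degree $0$, i.e.\ a red strand may never pass the seam $x=0$ in a ``reordering'' (otherwise the conclusion only holds after tensoring by $\mathcal{L}^{\pm1}$ --- compare $\tilting^{{\color{red}2}2132{\color{red}2}}\cong\mathcal{H}\otimes\mathcal{L}$ with $\tilting^{{\color{red}2}{\color{red}2}2312}\cong\mathcal{H}$), and the fallback in (3) is circular as phrased, since the categorified Serre relation \emph{is} the asserted splitting; a cleaner shortcut is that \eqref{triple-dumb} or \eqref{red-triple} factors the identity of $e(\cdots 2i2\cdots)$ through the two thick idempotents, so $\tilting^{\cdots 2i2\cdots}$ is a summand of $\tilting^{\cdots i2^{(2)}\cdots}\oplus\tilting^{\cdots 2^{(2)}i\cdots}$, and the rank count of \Cref{lem:tilting-rank} ($2=1+1$) forces equality.
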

Thus, we will obtain a tilting generator equiconstituted to $\tilting$ if we take the sum of $\tilting^{\Bi}$ where $\Bi$ ranges over those where we have a $2^{(2)}$ (in which case we have a line bundle) or when the pair of black $2$'s is separated on both sides of the circle by two elements of the set $\{1,3,{\color{red} 2}\}$.  
Up to reordering using \Cref{wordlemma}(1), there are only two cases where we have $2^{(2)}$, determined by which of the gaps between red ${\color{red} 2}$'s the single appearance of $2^{(2)}$ lies in:
\begin{equation}\label{eq:line-words}
{\color{red} 2}12^{(2)}3{\color{red} 2}\qquad \qquad {\color{red} 2}{\color{red} 2}12^{(2)}3.
\end{equation}
There are four possibilities where the pair of black $2$'s are separated:
\begin{equation}\label{eq:rank-2-words}
    {\color{red}2}2132 {\color{red}2}\qquad\qquad {\color{red}22}2312 \qquad \qquad {\color{red}2}23{\color{red}2}21\qquad\qquad  {\color{red}2}21{\color{red}2}23
\end{equation} 
From this point on, we need only consider these cases.
\begin{lemma}
    In the cases of (\ref{eq:line-words}--\ref{eq:rank-2-words}), the corresponding vector bundle $\tilting^{\Bi}$ is indecomposable.  
\end{lemma}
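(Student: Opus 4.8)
The plan is to reduce indecomposability to a one-line statement about idempotents and then to settle that by a degree count on KLRW diagrams. The two words in \eqref{eq:line-words} need no work: by \Cref{lem:tilting-rank} the sheaf $\tilting^{\Bi}$ then has rank $1$, and a line bundle on the integral variety $T^{*}X$ is automatically indecomposable, since a nonzero coherent subsheaf of a line bundle on an integral scheme again has rank $1$. So assume from now on that $\Bi$ is one of the four words in \eqref{eq:rank-2-words}, where $\tilting^{\Bi}$ has rank $2$.

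I would show that the endomorphism algebra $\End_{\mathcal O_{T^{*}X}}(\tilting^{\Bi})$ has no idempotents other than $0$ and $1$. The key structural input is that, for the scaling $\C^{*}$-action, $\End(\tilting^{\Bi})$ is non-negatively graded with degree-zero part $\End_{\mathcal O_{X}}(\tilting^{\Bi}|_{X})$, where $X=\Gr(2,4)$ is the zero section: this comes from filtering the global sections of the endomorphism bundle by the powers of the ideal of $X$, the $j$-th graded piece $\Gamma\big(X;(\tilting^{\Bi}|_{X})^{\vee}\otimes\tilting^{\Bi}|_{X}\otimes\Sym^{j}TX\big)$ sitting in scaling weight $2j$. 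Any idempotent of a non-negatively graded algebra whose degree-zero part is one-dimensional equals $0$ or $1$, so it suffices to prove $\End(\tilting^{\Bi})^{0}$ is one-dimensional. Via $\tilting^{\Bi}=\Loc{e\mathbf{R}e(\Bi)}$ this space can be computed diagrammatically: it is the span, modulo the relations \crefrange{first-QH}{red-triple}, of the twist-zero cylindrical KLRW diagrams from $\Bi$ to $\Bi$ of scaling degree $0$, and the claim becomes that up to scalars (and relations) the only such diagram is the identity.

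I would prove this by a degree count modelled on the proof of \Cref{lem:twisting-minimal} and the lemma preceding it. By \eqref{eq:KLRW-grading}--\eqref{eq:KLRW-grading2}, in $\mathring{R}$ a dot contributes $+2$ to the scaling degree, a crossing of a black $2$-strand with a black $1$, black $3$, or red $2$ strand contributes $+1$, a crossing among strands whose labels lie in $\{1,3,\textcolor{red}{2}\}$ contributes $0$, and a crossing of the two black $2$-strands with each other contributes $-2$; recall also that the red strands are vertical. In each of the four words the two black $2$-strands are separated, along both arcs of the circle, by a pair of strands whose labels lie in $\{1,3,\textcolor{red}{2}\}$; hence any routing that winds a black $2$-strand around the cylinder, or that moves one black $2$-strand past the other, already has positive degree, while a bigon between the two black $2$-strands is zero by the $i=j$ case of \eqref{black-bigon}. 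After resolving bigons, these estimates leave no room for dots, winding, or a nontrivial permutation of strands, so the diagram must be the identity. Hence $\End(\tilting^{\Bi})^{0}=\C$ and $\tilting^{\Bi}$ is indecomposable.

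The step I expect to be the main obstacle is making that count airtight. Resolving a bigon between a black $2$-strand and a black $1$- or $3$-strand via \eqref{black-bigon} produces diagrams carrying a dot, and for such a diagram to have scaling degree $0$ the dot would have to be offset by a crossing of the two black $2$-strands; one must check, word by word through \eqref{eq:rank-2-words} and using the separating pairs of non-$2$ strands on each side, that no such compensating configuration can actually occur. This is the same type of bookkeeping as in the earlier tableau argument, now transplanted to diagrams between the spread-out idempotents $e(\Bi)$ rather than between $e(\Bi)$ and the Coulomb idempotent $e$.
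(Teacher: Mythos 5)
Your proposal is correct and follows essentially the same route as the paper: the paper also identifies $\End(\tilting^{\Bi})$ with $e(\Bi)Re(\Bi)$, notes that a decomposition would yield a scaling-degree-$0$ idempotent, and checks that the identity spans the degree-$0$ diagrams for each word in (\ref{eq:line-words}--\ref{eq:rank-2-words}). Your added details (the non-negativity of the grading and the explicit crossing/dot degree count using the separating strands in $\{1,3,\textcolor{red}{2}\}$) simply flesh out what the paper leaves as a ``readily checked'' computation.
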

\begin{proof}
    We can compute $\End(\tilting^{\Bi})=e(\Bi)Re(\Bi)$, so if $\tilting^{\Bi}$ is decomposable, there is an endomorphism of scaling degree 0 given by projection to one of the summands.  But one can readily check that the identity spans degree 0 diagrams in these cases.   
\end{proof}

To identify which vector bundles these $\mathcal{A}$-modules correspond to, we will consider the localization of these modules with respect to the open subsets where $\coord_{13}\neq0$ and where $\coord_{24}\neq0$. We do this for two reasons: the first is that it will assist us in constructing free generators for the modules and the second is that it will allow us to compute transition matrices of the corresponding locally free coherent sheaf which we can then compare with the results of 
Section \ref{sec:TGr}. 

We will outline here the argument used to find generators for these $\coul$-modules and this argument will essentially be the proof that the pairs of diagrams given below in Sections \ref{sec:trivial} and \ref{sec:non-trivial} below that
for each idempotent and patch do indeed generate the corresponding module.

By definition, for a given word $\Bi$ consisting of one black 1, two black 2's, one black 3 and two red \textcolor{red}{2}'s, we have an idempotent for $\coul$. We are interested in the $\coul$-module $M^{\Bi} =e\mathbf{R}e(\Bi)$ whose elements are KLRW diagrams with the word $\Bi$ labeling the bottom of the diagram and the word for the Coulomb branch idempotent $\mathbf{j}=$\textcolor{red}{2}123\textcolor{red}{2} labeling the top. 

This is not a free $\coul$-module, but we know by \Cref{lem:tilting-rank} that the module localized to the patch where $\coord_{ij}\neq0$, which we denote $M^\Bi_{ij}$, is free and of rank 2 or 1 for any $i,j$. To calculate the transition functions, we find generators of $M^\Bi_{ij}$ for the patches with $(i,j)=(1,3),(2,4)$.

By applying Nakayama's lemma to the localizations at all different prime ideals, a surjective map between rank $k$ free modules over $\coul$ is an isomorphism, so a generating set whose size matches the rank of the vector bundle must necessarily be a free generating set.

To construct these generating sets, we will extensively use the following fact:
\begin{lemma}
Given a diagram in $M^\Bi_{ij}$, let $(a_0,0)$, $(b_0,0)$ be a pair of points at the bottom of the diagram, $(a_1,1),(b_1,1)$ a pair of points at the top of the diagram, and $\alpha$ be a continuous path from $(a_0,0)$ to $(a_1,1)$. Then there exists a unique path up to isotopy $\beta$ from $(b_0,0)$ to $(b_1,1)$ such that $\alpha$ and $\beta$ do not intersect.
\end{lemma}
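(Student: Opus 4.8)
The plan is to reduce the lemma to a standard fact about arcs in a disk. First I would observe that the two points at the bottom of the diagram are distinct, as are the two points at the top (this is the content of the phrase ``a pair of points''), and that $\alpha$ may be taken to be an \emph{embedded} arc in the cylinder $C=\R/\Z\times[0,1]$ running from the bottom circle to the top circle with its interior in the open cylinder --- in all the applications $\alpha$ is a strand, hence automatically embedded, and for a general continuous $\alpha$ one first replaces it by an embedded representative of its homotopy class rel endpoints, which changes neither the hypothesis nor the conclusion, since embedded arcs in a surface that are homotopic rel endpoints are ambient isotopic.

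The key step is the claim that \emph{cutting $C$ along $\alpha$ yields a closed disk} $R$. The cleanest way to see this is to pass to the universal cover $\R\times[0,1]\to C$ and write $\alpha=\{(\bar\pi(t),t):t\in[0,1]\}$ for a path $\bar\pi\colon[0,1]\to\R/\Z$ with a continuous lift $\pi\colon[0,1]\to\R$; then the region $R=\{(x,t):\pi(t)\le x\le \pi(t)+1\}$ is homeomorphic to $[0,1]^2$ via $(x,t)\mapsto(x-\pi(t),t)$, and the covering map restricts to a map $R\to C$ that is a homeomorphism onto the interior and identifies the two vertical sides $x=\pi(t)$ and $x=\pi(t)+1$ with $\alpha$. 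Thus $R$ is exactly ``$C$ slit open along $\alpha$'': its boundary circle is divided into two copies of $\alpha$ together with the images of the bottom and top circles slit at $a_0$ and $a_1$, and under this identification $(b_0,0)$ lies in the interior of the bottom boundary arc of $R$ and $(b_1,1)$ in the interior of the top one. (Alternatively, an embedded arc joining the two boundary components of a cylinder is non-separating, so $R$ is connected, and cutting along a properly embedded arc raises the Euler characteristic by one, giving $\chi(R)=1$; a connected orientable surface with nonempty boundary and $\chi=1$ is a disk.)

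Now I would invoke the elementary fact that two points lying in distinct boundary arcs of a closed disk are joined by an arc whose interior lies in the interior of the disk, and that any two such arcs are isotopic rel endpoints through arcs of this type --- immediate from the Jordan--Schoenflies theorem, or from the observation that removing such an arc from the disk leaves two disks. Applying this in $R$ to $p=(b_0,0)$ and $q=(b_1,1)$ and transporting back to $C$ through the cut produces an arc $\beta$ from $(b_0,0)$ to $(b_1,1)$ with interior in the open cylinder, disjoint from $\alpha$, and unique up to isotopy among arcs disjoint from $\alpha$; a further isotopy makes $\beta$ a strand if desired. Finally, since an isotopy of diagrams preserving the genericity conditions can never push $\beta$ across $\alpha$, such isotopies are precisely isotopies of $\beta$ inside $C\setminus\alpha$, so the uniqueness just proved is exactly the ``unique up to isotopy'' asserted in the lemma. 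I expect no genuine topological difficulty here; the only points to get right are the interpretation of ``up to isotopy'' (isotopy through paths missing $\alpha$, equivalently diagram isotopy, rather than free homotopy in $C$) and the reduction to an embedded $\alpha$, after which everything follows from the disk being simply connected in the strong, isotopy sense.
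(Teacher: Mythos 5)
Your argument is correct, and in fact the paper offers no proof of this lemma at all---it is stated as a standard fact and used as such---so there is nothing in the text for your write-up to diverge from. Cutting the cylinder along $\alpha$ to obtain a disk (either via the universal-cover picture or the Euler-characteristic count) and then invoking uniqueness up to isotopy of an arc joining two points on distinct boundary arcs of a disk is exactly the standard justification, and your remarks on the correct reading of ``unique up to isotopy'' (isotopy in the complement of $\alpha$, which is what diagram isotopy gives) and on arranging $\beta$ to be a strand via the fiber-preserving homeomorphism $(x,t)\mapsto(x-\pi(t),t)$ are the right points to make explicit. The only caveat is your claim that replacing a general continuous $\alpha$ by an embedded representative ``changes neither the hypothesis nor the conclusion'': for a genuinely wild non-embedded path this is not literally true (the complement of the image can change drastically), so the lemma should be read, as you effectively do, for strand-like paths $\{(\bar\pi(t),t)\}$, which are graphs and hence automatically embedded---this is the only situation in which the paper applies it.
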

We will refer in the discussion below to the ``left'' and ``right'' red strands and black strands with label $2$ based on their position in the unrolled diagram at the bottom of the diagram.  
\begin{algorithm}\label{algorithm13}
Assume that $k\geq 2$.  The generating diagrams of $M^\Bi_{13}$ with winding grading $k\in\Z$, which we denote $\Zij{s}{13}{k}$ for $s=1,2$, can be constructed as follows:
\begin{enumerate}
    \item Begin by wrapping the left red strand around the cylinder $k$ times in the positive direction. The right red strand is vertical.  
    \item \begin{enumerate}
        \item If $s=1$, then we connect the left black terminal with label $2$ to the unique such terminal at the top of the diagram, avoiding intersecting the left red strand, and similarly with the right black terminal, avoiding intersection with the right red strand and minimizing the number of crossings with other strands (i.e. not creating any bigons with other strands).
        \item  If $s=2$, we do the same but with the left black strand avoiding the right red and {\it vice versa}.  For simplicity of the discussion below, we say that the black and red strand that avoid intersection above are {\bf tied.}
    \end{enumerate} 
    \item The unique terminals for 1 and 3 on the top and bottom are connected so that they avoid crossing the black strand tied to the left red strand.  That is, if $s=1$ (resp.\ $s=2$), they avoid crossing the left (resp.\ right) black strand.
\end{enumerate}

\begin{examplex}
	For example, when making $\Zij{2}{13}{1}$ $\Zij{2}{13}{2}$   
we begin both diagrams with the same diagram with only red strands, but first add a black strand tied to the left red strand connecting to the left and right black terminals at the bottom of the diagram, respectively:  \[\tikz[very thick,xscale=2,yscale=2]{
            \draw (-1,-1)-- (1,-1);
            \draw (-1,1)-- (1,1);
          \draw[dashed] (-1,-1)-- (-1,1);
          \draw[dashed] (1,1)-- (1,-1);
          \draw[red] (-1,.75).. controls (-1,.75) and (-0.8,.875).. (-0.8,1);
          \draw (-1,.25).. controls (-1,.25) and (0,.625).. node[above, at end]{$2$} (0,1);
          \draw[red] (-1,-0.25)-- (1,0.75);
          \draw (-1,-0.5)-- (1,0.5);
          \draw[red] (-0.8,-1).. controls (-0.8,-1) and (-0.8,-.625).. node[below, at start]{$r_L$} (1,-0.25);
          \draw[red] (-.48,-1)-- node[below, at start]{$r_R$} (0.8,1);
          \draw (-.16,-1).. controls (-.16,-1) and (-.16,-.75).. node[below, at start]{$2$} (1,-0.5);
          }\qquad\qquad \tikz[very thick,xscale=2,yscale=2]{
            \draw (-1,-1)-- (1,-1);
            \draw (-1,1)-- (1,1);
          \draw[dashed] (-1,-1)-- (-1,1);
          \draw[dashed] (1,1)-- (1,-1);
          \draw[red] (-1,.75).. controls (-1,.75) and (-0.8,.875).. (-0.8,1);
          \draw (-1,.25).. controls (-1,.25) and (0,.625).. node[above, at end]{$2$} (0,1);
          \draw[red] (-1,-0.25)-- (1,0.75);
          \draw (-1,-0.75)-- (1,0.25);
          \draw[red] (-0.8,-1).. controls (-0.8,-1) and (-0.8,-.625).. node[below, at start]{$r_L$} (1,-0.25);
          \draw[red] (-.48,-1)-- node[below, at start]{$r_R$} (0.8,1);
          \draw (.8,-1).. controls (.8,-1) and (.8,-.875).. node[below, at start]{$2$} (1,-.75);
          }\]
 We then add a second red strand, tied to the right red strand, and eventually merging with the first black strand.
 \[\tikz[very thick,xscale=2,yscale=2]{
            \draw (-1,-1)-- (1,-1);
            \draw (-1,1)-- (1,1);
          \draw[dashed] (-1,-1)-- (-1,1);
          \draw[dashed] (1,1)-- (1,-1);
          \draw[red] (-1,.75).. controls (-1,.75) and (-0.8,.875).. (-0.8,1);
          \draw (-1,.25).. controls (-1,.25) and (0,.625).. node[above, at end]{$2$} (0,1);
                    \draw (-1,0).. controls (-1,0) and (0,.625)..  (0,1);
          \draw[red] (-1,-0.25)-- (1,0.75);
          \draw (-1,-0.75)-- (1,0.25);
          \draw[red] (-0.8,-1).. controls (-0.8,-1) and (-0.8,-.625).. node[below, at start]{$r_L$} (1,-0.25);
          \draw[red] (-.48,-1)-- node[below, at start]{$r_R$} (0.8,1);
                    \draw (.8,-1).. controls (.8,-1) and (.8,-.5).. node[below, at start]{$2$} (1,0);
          \draw (-.16,-1).. controls (-.16,-1) and (-.16,-.75).. node[below, at start]{$2$} (1,-0.5);
          }\qquad\qquad \tikz[very thick,xscale=2,yscale=2]{
            \draw (-1,-1)-- (1,-1);
            \draw (-1,1)-- (1,1);
          \draw[dashed] (-1,-1)-- (-1,1);
          \draw[dashed] (1,1)-- (1,-1);
          \draw[red] (-1,.75).. controls (-1,.75) and (-0.8,.875).. (-0.8,1);
          \draw (-1,.25).. controls (-1,.25) and (0,.625).. node[above, at end]{$2$} (0,1);
          \draw[red] (-1,-0.25)-- (1,0.75);
          \draw (-1,-0.75)-- (1,0.25);
          \draw[red] (-0.8,-1).. controls (-0.8,-1) and (-0.8,-.625).. node[below, at start]{$r_L$} (1,-0.25);
          \draw[red] (-.48,-1)-- node[below, at start]{$r_R$} (0.8,1);
          \draw (.8,-1).. controls (.8,-1) and (.8,-.875).. node[below, at start]{$2$} (1,-.75);
                    \draw (-.16,-1).. controls (-.16,-1) and (-.16,-.375).. node[below, at start]{$2$} (1,0.25);
          }\]
Finally, we perform step 3., adding in the strands with labels 1 and 3:
\[\Zij{2}{13}{1}=\begin{array}{c}  
 \tikz[very thick,xscale=2,yscale=2]{
           \draw (-1,-1)-- (1,-1);
            \draw (-1,1)-- (1,1);
          \draw[dashed] (-1,-1)-- (-1,1);
          \draw[dashed] (1,1)-- (1,-1);
          \draw[red] (-1,.75).. controls (-1,.75) and (-0.8,.875).. (-0.8,1);
          \draw (-1,.25).. controls (-1,.25) and (0,.625).. node[above, at end]{$2$} (0,1);
                    \draw (-1,-.1).. controls (-1,-.1) and (0,.325)..  (0,1);
          \draw[red] (-1,-0.25)-- (1,0.75);
          \draw (-1,-0.7)-- (1,0.3);
          \draw (-1,-0.5)-- (1,0.5);
          \draw[red] (-0.8,-1).. controls (-0.8,-1) and (-0.8,-.625).. node[below, at start]{$r_L$} (1,-0.25);
          \draw[red] (-.48,-1)-- node[below, at start]{$r_R$} (0.8,1);
                    \draw (.8,-1).. controls (.8,-1) and (.8,-.5).. node[below, at start]{$2$} (1,-.1);
          \draw (-.16,-1).. controls (-.16,-1) and (-.16,-.75).. node[below, at start]{$2$} (1,-0.5);
          \draw (-1,.5).. controls (-1,.5) and (-.16,.75).. node[above, at end]{$1$} (-.16,1);
          \draw (-1,.1).. controls (-1,0) and (.16,0.5).. node[above, at end]{$3$} (0.16,1);
                    \draw (0.16,-1).. controls (0.16,-1) and (0.16,-0.5).. node[below, at start]{$3$} (1,.1);
          \draw (.48,-1).. controls (.48,-1) and (.48,-.75).. node[below, at start]{$1$} (1,-.7);
          }
   \end{array}\qquad 
\Zij{2}{13}{2}=\begin{array}{c}  
 \tikz[very thick,xscale=2,yscale=2]{
            \draw (-1,-1)-- (1,-1);
            \draw (-1,1)-- (1,1);
          \draw[dashed] (-1,-1)-- (-1,1);
          \draw[dashed] (1,1)-- (1,-1);
          \draw[red] (-1,.75).. controls (-1,.75) and (-0.8,.875).. (-0.8,1);
          \draw (-1,.5).. controls (-1,.5) and (-.16,.75).. node[above, at end]{$1$} (-.16,1);
          \draw (-1,.25).. controls (-1,.25) and (0,.625).. node[above, at end]{$2$} (0,1);
          \draw (-1,0).. controls (-1,0) and (.16,0.5).. node[above, at end]{$3$} (0.16,1);
          \draw[red] (-1,-0.25)-- (1,0.75);
          \draw (-1,-0.5)-- (1,0.5);
          \draw (-1,-0.75)-- (1,0.25);
          \draw[red] (-0.8,-1).. controls (-0.8,-1) and (-0.8,-.625).. node[below, at start]{$r_L$} (1,-0.25);
          \draw[red] (-.48,-1)-- node[below, at start]{$r_R$} (0.8,1);
          \draw (-.16,-1).. controls (-.16,-1) and (-.16,-.375).. node[below, at start]{$2$} (1,0.25);
          \draw (0.16,-1).. controls (0.16,-1) and (0.16,-0.5).. node[below, at start]{$3$} (1,0);
          \draw (.48,-1).. controls (.48,-1) and (.48,-.75).. node[below, at start]{$1$} (1,-.5);
          \draw (.8,-1).. controls (.8,-1) and (.8,-.875).. node[below, at start]{$2$} (1,-.75);
          }
   \end{array}\]
\end{examplex}
If $k<2$, we let $\Zij{s}{13}{k}=\coord_{13}^{k-2}\Zij{s}{13}{2}$.  
\end{algorithm}
It might be hard to see from this description how $(i,j)=(1,3)$ is relevant; it is perhaps easiest to see when we consider why:
\begin{lemma}\label{lem:k-same}
For all $k,k'$, we have $\Zij{s}{13}{k}=\coord_{13}^{k-k'}\Zij{s}{13}{k'}$.  
\end{lemma}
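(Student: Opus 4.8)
The plan is to reduce the statement to a single ``twist-by-one'' identity and then prove that identity diagrammatically, using the uniqueness that is already implicit in Algorithm~\ref{algorithm13}. Write $Y_k=\coord_{13}^{-k}\Zij{s}{13}{k}$ for the corresponding twist-degree-$0$ element of the localization $M^\Bi_{13}$ (recall $\coord_{13}$ is invertible there and multiplication by it raises twist degree by $1$). The lemma asserts precisely that $Y_k$ is independent of $k$. For $k<2$ the defining convention $\Zij{s}{13}{k}=\coord_{13}^{k-2}\Zij{s}{13}{2}$ gives $Y_k=Y_2$ tautologically, so by telescoping it is enough to prove
\[
\Zij{s}{13}{k+1}=\coord_{13}\,\Zij{s}{13}{k}\qquad(k\ge 2).
\]

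To prove this I would stack the explicit diagram $\coord_{13}$ (twist degree $1$, with both its top and its bottom equal to the Coulomb word $\mathbf j$) on top of $\Zij{s}{13}{k}$ and simplify the composite using the relations \crefrange{first-QH}{red-triple}. The result must be checked against the three steps of Algorithm~\ref{algorithm13} at level $k+1$: (i) after isotopy the left red strand wraps exactly $k+1$ times and the right red strand is still vertical, which is automatic since all red--red crossings have the same sign (part of the definition of a $k$-twisted diagram, and $\coord_{13}$ adds exactly one more positive wrap), so no red--red bigon appears; (ii) the black $2$-strand tied to the left (resp.\ right) red strand in $\Zij{s}{13}{k}$ remains tied to it, which one verifies by removing the black bigons produced by the stacking via \eqref{black-bigon} and the red--black bigon relations and noting that $\coord_{13}$ is exactly the diagram that threads one extra unit of winding past the seam in the way compatible with $\mathbf j$, so no bigon with either red strand survives; and (iii) the $1$- and $3$-strands still avoid the distinguished black strand, again by inspection of the shape of $\coord_{13}$. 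Hence the simplified composite is a tightened diagram satisfying exactly conditions (1)--(3) imposed on $\Zij{s}{13}{k+1}$.

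To finish I would invoke uniqueness. The lemma on unique non-intersecting companion paths stated just before Algorithm~\ref{algorithm13} shows that, once the winding of the two red strands and the tying data are fixed, a tightened diagram with the prescribed top and bottom words is determined up to isotopy; so the simplified composite is isotopic to $\Zij{s}{13}{k+1}$. Since the relations used in the simplification were applied only to bigons and triple points whose distinguished resolution carries coefficient $1$, and since the sign contributions from \eqref{black-bigon} and \eqref{red-triple} cancel, the two diagrams agree on the nose and not merely up to a scalar, giving $\coord_{13}\,\Zij{s}{13}{k}=\Zij{s}{13}{k+1}$.

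The main obstacle is step (ii)--(iii): one has to be sure that composing with $\coord_{13}$ genuinely preserves the tying data and produces no bigons that persist inside $M^\Bi$, and that the sign bookkeeping in \eqref{black-bigon} and \eqref{red-triple} leaves the overall coefficient equal to $1$. This is a finite check because $\coord_{13}$ is an explicit small diagram and the possible bigons it can form with the strands of $\Zij{s}{13}{k}$ are few, but it is where the real content of the lemma lies; the reduction and the uniqueness step are formal.
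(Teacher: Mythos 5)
Your reduction is the same as the paper's: it suffices to show $\Zij{s}{13}{k+1}=\coord_{13}\Zij{s}{13}{k}$ for $k\geq 2$, with the $k<2$ cases handled by the defining convention. But the heart of the lemma is exactly the part you defer as a ``finite check,'' and your plan for that check is off-target. You propose to simplify the stacked diagram by ``removing bigons'' via \eqref{black-bigon} and the red--black bigon relations and then track signs so that the coefficient is $1$. That cannot work as stated: a bigon between the two black strands of label $2$ is killed outright by \eqref{black-bigon} (the $i=j$ case gives $0$), and a bigon between a $2$-strand and a $1$- or $3$-strand produces a \emph{difference of dotted diagrams}, not a unit multiple of a tightened diagram. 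So the identity can only hold on the nose because the composite $\coord_{13}\Zij{s}{13}{k}$ contains \emph{no} bigons at all; no relations are applied, and the statement is purely an isotopy statement, after which the characterization in \Cref{algorithm13} identifies the result with $\Zij{s}{13}{k+1}$.

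What is missing is the argument that no bigon can form, and this is where the hypothesis $k\geq 2$ enters --- your sketch never uses it. The paper's proof runs as follows: in the composite, the black strand $R$ tied to the left red strand has winding number $k+1$ (it is the strand that picks up the wrapping black strand of $\coord_{13}$ at the thick-strand merge point, while the other black strand $R'$ picks up the vertical one --- a connectivity statement you also need to justify, since the join-then-split at the $2^{(2)}$ point is resolved by the half-twist convention), whereas $R'$ has winding number $\leq 1$ and the remaining strands (left red, $1$, $3$) have winding number $\geq 1$ but do not meet $R$ by construction of the two factors. Any bigon would have to involve $R$, and for $k\geq 2$ the gap between the winding number of $R$ and that of every other strand forces all their crossings to be essential, so no bigon exists. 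Without this winding-number analysis (or an equivalent), your appeal to the uniqueness of non-crossing companion paths has nothing to apply to, so the proposal as written has a genuine gap.
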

\begin{proof}
It's enough to show that if $k\geq 2$, then $\Zij{s}{13}{k+1}=\coord_{13}\Zij{s}{13}{k}$.
    Since $k\geq 2$, the black strand $R$ tied to the left red will meet the other black strand $R'$ one from the left---the strand $R$ must have winding number $k+1$, while the other $R'$ must have winding number $\leq 1$ (depending on which side of the right red strand it lies on).

    Thus, in $\coord_{13}\Zij{s}{13}{k}$, the strand $R$ connects to the strand which winds around in $\coord_{13}$, and $R'$ to the one which remains vertical.  Thus, in $\coord_{13}$, the left red strand and strands with labels $1$ and $3$ do not intersect $R$, while the right red strand does not intersect $R'$.  Thus, if the product diagram $\coord_{13}\Zij{s}{13}{k}$ has no bigons, then we have $\Zij{s}{13}{k+1}$. Such a bigon is impossible for $k\geq 2$, since it would have to involve the strand $R$, and for $k\geq 2$, all the other black strands have winding number $\geq 1$ while $R'$ must have winding number $\leq 1$.
\end{proof}
Similar arguments apply when we consider $(i,j)=(1,4),(2,3),(2,4)$, suitably modifying step (3) so that strands that wrap in $\coord_{ij}$ avoid crossing the black strand tied to the left red and those that are vertical in $\coord_{ij}$.  In particular, for $(i,j)=(2,4)$, we have:
\begin{itemize}
    \item[3'.] Connect the unique terminals for 1 and 3 on the top and bottom so they avoid intersections with the black strand tied to the right red strand. 
\end{itemize}
\begin{remark}
    This approach does not work for $\coord_{12},\coord_{34}$; it is an interesting question how we modify this approach to work in these cases.  
\end{remark}
\begin{theorem}
    For $k\gg 0$, the diagrams $\Zij{1}{13}{k},\Zij{2}{13}{k}$ are a free basis for $M^{\Bi}_{13}$.
\end{theorem}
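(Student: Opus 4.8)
The plan is to reduce the assertion to a diagrammatic spanning claim and then establish that claim by a straightening argument based on the non-crossing-path lemma stated just before \Cref{algorithm13}. First, by \Cref{lem:tilting-rank} the module $M^{\Bi}_{13}$ is free of rank $2$ over the ring $\coul^0_{13}$ of functions on the open set $\{\coord_{13}\neq 0\}\subseteq T^*X$. A surjection between free modules of the same finite rank over a commutative ring is an isomorphism (it is a surjective endomorphism of a finitely generated module), so it is enough to prove that $\Zij{1}{13}{k}$ and $\Zij{2}{13}{k}$ \emph{generate} $M^{\Bi}_{13}$ as a $\coul^0_{13}$-module: once generation is known, the resulting surjection $(\coul^0_{13})^{\oplus 2}\twoheadrightarrow M^{\Bi}_{13}$ is automatically an isomorphism, so no separate linear-independence check is needed. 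Each $\Zij{s}{13}{k}$ is tightened, hence a nonzero element, and by \Cref{lem:k-same} the $\coul^0_{13}$-submodule these two diagrams generate is independent of $k$ for $k\geq 2$; we may therefore fix any single large value of $k$.

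Writing $(M^{\Bi})_m = e\mathring{T}^m e(\Bi)$ for the twist-degree-$m$ piece, we have $M^{\Bi}_{13}=\varinjlim_m (M^{\Bi})_m$, the colimit taken along multiplication by $\coord_{13}$. It therefore suffices to show that for every cylindrical KLRW diagram $D$ from $\Bi$ to $\Bj$ of twist degree $m$, some power $\coord_{13}^{N}\cdot D$ lies in the $\coul$-submodule of $M^{\Bi}$ generated by $\Zij{1}{13}{\ell}$ and $\Zij{2}{13}{\ell}$ for $\ell$ large. This is the statement to be extracted from the informal ``algorithm'' discussion.

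To prove it, first use the nilHecke relations \eqref{nilHecke-1} together with the black and red/black bigon relations \eqref{black-bigon}, \eqref{cost} to rewrite $D$ as a $\coul$-combination of \emph{tightened} diagrams: no bigon between any pair of strands, and no dots beyond a controlled collection which, after localization and the thick $2^{(2)}$-strand conventions of \cite{khovanovExtendedGraphical2012} (the scaling-degree-$2$ operators $\bullet_i\in e\mathring{R}e$), get absorbed into the coefficients. Then multiply by $\coord_{13}^{N}$ with $N$ large, adding $N$ positive wraps of the left red strand. A black strand whose label is not $2$ meets a red strand in an unconstrained transverse crossing, while a black $2$-strand meeting a red $2$-strand is controlled by \eqref{cost}; with enough wraps available one can isotope so that each of the two black $2$-strands runs parallel to (``is tied to'', in the language of \Cref{algorithm13}) one of the two red strands. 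There are exactly two such pairings, and these are precisely the $s=1$ and $s=2$ cases. With the pairing $s$ fixed, the non-crossing-path lemma forces the isotopy class of the strands with labels $1$, $2$ and $3$ relative to the tied strands, so the tightened diagram agrees with $\Zij{s}{13}{m+N}$ up to a factor in $e\mathring{R}e$ coming from the extra $\coord_{13}$-wraps, the crossings removed, and the dots. Summing over the two pairings gives the desired membership, and hence the theorem.

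The delicate points are all inside the straightening. The first is the bookkeeping needed to trade every dot for a coefficient in $e\mathring{R}e$ --- this is where the classification of scaling-degree-$0$ diagrams in \Cref{lem:twisting-minimal} and the behaviour of dots under the thick-strand conventions enter. The second, and I expect the main obstacle, is the claim that after tightening and winding the only remaining freedom is the pairing $s\in\{1,2\}$ together with a coefficient, i.e.\ that no third standard configuration of the black $2$-strands survives. Here the rank computation of \Cref{lem:tilting-rank} does the heavy lifting: knowing in advance that $M^{\Bi}_{13}$ is free of rank $2$ and that there are at most two standard diagrams forces the map they span to be surjective, so we never have to classify all tightened diagrams word-by-word for the four words of \eqref{eq:rank-2-words}.
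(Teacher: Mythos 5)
Your reduction is the same one the paper makes: by \Cref{lem:tilting-rank} the localized module is free of rank $2$, a surjection between free modules of equal finite rank is an isomorphism, and by \Cref{lem:k-same} it suffices to treat one large $k$; so everything comes down to showing that $\Zij{1}{13}{k},\Zij{2}{13}{k}$ \emph{generate} $M^{\Bi}_{13}$. That generation statement is the heart of the theorem, and your argument for it has a genuine gap. The assertion that after tightening and multiplying by $\coord_{13}^{N}$ ``one can isotope so that each of the two black $2$-strands is tied to one of the two red strands, and there are exactly two such pairings'' is precisely what has to be proved, and you give no argument for it; the delicate point is how the two black $2$-strands interact with each other and with the label $1,3$ strands (note that in $\Zij{s}{13}{k}$ the two black $2$-strands do cross each other once $k\geq 2$, so ``tied to a red strand'' does not by itself determine the diagram). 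The paper's proof supplies exactly this missing step by a concrete geometric argument: normalize a finite generating set to a common large twist degree $k$ with all relevant winding numbers in $[k-a,k+a]$ and $k\geq 3a$, pull each diagram taut, locate the \emph{first} intersection of the two black $2$-strands at some height $h$, and analyze the triangle they bound there. Tautness forces every other strand to cross at most one side of that triangle, and the winding-number estimates decide which side each red strand and each $1$- or $3$-strand can cross; this pins the part of the diagram below $y=h$ to be exactly $\Zij{1}{13}{k}$ or $\Zij{2}{13}{k}$, giving the factorization you need.

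Moreover, your fallback --- that the rank computation of \Cref{lem:tilting-rank} ``forces the map they span to be surjective'' --- is a non sequitur and cannot close the gap. Freeness of rank $2$ guarantees that a generating set of size two is automatically a free basis (the Nakayama-type argument, which both you and the paper use correctly in that direction); it does not say that any two particular elements generate. A free module of rank $2$ has many proper submodules spanned by two elements, e.g.\ replace one of your generators by its product with any nonunit of $\coul^{0}_{13}$ such as $\bullet_1$. So knowing the rank in advance can never substitute for the straightening/factorization argument, and as written the proposal assumes the main point rather than proving it.
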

\begin{proof}
    As discussed above, it is enough to show that these are generators, and since we have inverted $\coord_{13}$, 
    by \Cref{lem:k-same}, proving this for one large $k$ establishes it for all $k$ greater than that.

    Let $m_1,\dots, m_p$ be a set of generators of this module.  By multiplying by powers of $\coord_{13}$, we can assume that all of these have the same twisting degree $k$, 
    and for some fixed $a\geq 0$, we have the winding number of the strands with labels $1,3$ and the sum of the winding numbers for the black strands with label $2$ are in the interval $[k-a,k+a]$.  Note that multiplying by $\coord_{13}$ increases all these quantities by 1, so this condition is unchanged by increasing $k$;  in particular, we can assume that $k\geq 3a$.  
    
    We can also assume without loss of generality that our diagram is ``pulled taut,'' i.e. no pair of strands make a bigon.   In this case, there must be a first point where the black strands meet.  Let $h$ be the height of this intersection. 
 Since the sum of the winding numbers of the two black strands is $\leq k+a$, at least one of these strands must have winding number $\leq \frac{k+a}{2}\leq k-a$.  Thus, this intersection must come at the top of a triangle made by the two black strands, and by the tautness, the right side of the triangle is the black strand $R$ of label $2$ with lower winding number and the left side is the black strand $L$ of label $2$ with greater winding number.  If any red strand or strand with label 1 or 3 crosses both sides of the triangle,  then we can shrink the triangle to remove this crossing, so each other strand crosses at most one side of this triangle. Since the left red strand and both strands with label $1$ and $3$ have greater winding number than $R$, they must cross it at least once, and thus, after isotopy, we can assume that these strands only intersect a black with label $2$ below $y=h$ by crossing $R$ at most once and never crossing $L$.  On the other hand, the right red strand has winding number $0$, and thus we can arrange so that its only crossing with a black strand with label 2 below $y=h$ is crossing $L$ at most once, and never crossing $R$.  Thus, by isotoping the strands with labels other than 2, we can arrange that the cross section at height $y=h$ is $e$ and the portion of the diagram below $y=h$ is $\Zij{1}{13}{k}$ or $\Zij{2}{13}{k}$, depending on whether $L$ is the left black strand or the right one.  In fact, from this argument one can see that $k=2$ suffices, since if $k>2$, there are multiple intersection points between the black strands in $\Zij{1}{13}{k}$ and $\Zij{2}{13}{k}$.
 \end{proof}

 Given this basis, we now turn to computing the transition matrices with respect to it.

\subsection{Rank 1 cases}
\label{sec:trivial}
In this section, we identify the line bundles corresponding to the words in \eqref{eq:line-words}:
\begin{equation*}
    \tilting^{\textcolor{red}{2}12^{(2)}3\textcolor{red}{2}} \hspace{1in}\tilting^{\textcolor{red}{22}12^{(2)}3}.
\end{equation*}
For the former, simply note that $e(\textcolor{red}{2}12^{(2)}3\textcolor{red}{2})=e$,  we have
    $\tilting^{\textcolor{red}{2}12^{(2)}3\textcolor{red}{2}}\cong\Loc{eRe}\cong\mathcal{O}$. 
For the latter, we note that $\tilting^{\textcolor{red}{22}12^{(2)}3}\otimes\mathcal{L}$ is generated by
\begin{align*}
    \begin{array}{c}  
 \tikz[very thick,xscale=2,yscale=2]{
            \draw (-1,-1)-- (1,-1);
            \draw (-1,1)-- (1,1);
            \draw[dashed] (1,1)-- (1,-1);
          \draw[dashed] (-1,-1)-- (-1,1);
           \draw[red] (.33,-1).. controls (.33,-1) and (.33,-.67).. node[below, at start]{$r_L$} (1,-.3);
            \draw (-.67,-1).. controls (-.67,-1) and (-.33,-1).. node[below, at start]{$1$}node[above, at end]{$1$} (-.33,1);
           \draw[red] (.67,-1)--node[below, at start]{$r_R$} 
          (.67,1);
          \draw[red] (-1,.67).. controls (-1,.67) and (-.835,.67).. (-.67,1);
          \draw (0,1)--node[above, at start]{$2$} node[below, at end]{$2$} (0,-1);
            \draw (-.33,-1).. controls (-.33,-1) and (-.33,-.33)..n node[below, at start]{$3$} (-1,0.33);
        \draw (1,.33).. controls (1,.33) and (.67,.33).. node[above, at end]{$3$} (.33,1);
          }
   \end{array}.
\end{align*}
This diagram contains no relevant intersections and hence trivializes $\tilting^{\textcolor{red}{22}12^{(2)}3}\otimes\mathcal{L}$ on all coordinate patches $U_{ij}$. This implies
\begin{align*}
    &&\tilting^{\textcolor{red}{22}12^{(2)}3}\otimes\mathcal{L}&\cong\mathcal{O}&\\
    &\implies& \tilting^{\textcolor{red}{22}12^{(2)}3}&\cong \mathcal{L}^{-1}.&
\end{align*}
To summarize:
\begin{lemma}\label{lem:line-isos}  We have isomorphisms
$\tilting^{\textcolor{red}{2}12^{(2)}3\textcolor{red}{2}}\cong \mathcal{O}$ and  $\tilting^{\textcolor{red}{2}\textcolor{red}{2}12^{(2)}3}\cong \mathcal{L}^{-1}.$
\end{lemma}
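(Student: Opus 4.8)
The statement records the two computations made in the discussion just above, so the task is to assemble them cleanly; here is how I would organize it.

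\textbf{The first isomorphism} is essentially a tautology: the word $\textcolor{red}{2}12^{(2)}3\textcolor{red}{2}$ \emph{is} the Coulomb idempotent word $e$ out of which $\coul$ is built, so $e\mathbf{R}e(\textcolor{red}{2}12^{(2)}3\textcolor{red}{2})=e\mathbf{R}e=\coul$ as a graded $\coul$-module. By the construction of the localization functor $\Loc{-}$, one has that $\Loc{\coul}$ is the structure sheaf of $\Proj(\coul)=\tilde\fM$, and transporting along the identification $\tilde\fM\cong T^*X$ of the previous section gives $\tilting^{\textcolor{red}{2}12^{(2)}3\textcolor{red}{2}}\cong\cO$.

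\textbf{For the second isomorphism}, I would first note that by \Cref{lem:tilting-rank} the sheaf $\tilting^{\textcolor{red}{22}12^{(2)}3}$ is a line bundle, hence of the form $\cL^{a}$ for some $a\in\Z$; since a line bundle admitting a nowhere-vanishing global section is trivial, it is enough to produce a global section of $\tilting^{\textcolor{red}{22}12^{(2)}3}\otimes\cL$ that generates it as an $\cO$-module, for then $\cL^{a+1}\cong\cO$ and $a=-1$. Tensoring by $\cL$ shifts the twist degree by one, so I would take the explicit twisting-degree-$1$ diagram drawn above — the left red strand wrapping once around the cylinder, the black $1$- and $3$-strands running straight across, and the two black $2$'s merging into the thick strand at the top — as the candidate section, after recording that it contains no crossing that isotopy is forced to convert into a relation.

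\textbf{To check that this section generates} $\tilting^{\textcolor{red}{22}12^{(2)}3}\otimes\cL$, I would argue patch by patch, and it suffices to treat $U_{13}$ and $U_{24}$ since $U_{13}\cup U_{24}$ has complement of codimension $\ge 2$ in $T^*X$, so a trivialization there extends everywhere. On each such patch the localized module $M^{\textcolor{red}{22}12^{(2)}3}_{ij}\otimes\cL$ is free of rank one by \Cref{lem:tilting-rank}, so by Nakayama it is enough to see that the single diagram above generates it; this follows from the taut-diagram argument of the preceding subsection (push any bigon into a scalar factor, then read off the first place where the two black $2$-strands meet). Hence the section is nowhere vanishing, $\tilting^{\textcolor{red}{22}12^{(2)}3}\otimes\cL\cong\cO$, and $\tilting^{\textcolor{red}{22}12^{(2)}3}\cong\cL^{-1}$.

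\textbf{The main obstacle} is precisely the verification that the displayed degree-$0$, twist-degree-$1$ diagram has ``no relevant intersections'' and localizes to an honest free generator on each coordinate patch, rather than to $\coord_{ij}$ times one; this requires carefully tracking the winding numbers of the four black and two red strands and running the tautness/triangle analysis. The remaining ingredients — the idempotent identity in the first case, and the passage from a generating section to triviality in the second — are formal.
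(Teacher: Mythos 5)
Your proposal is correct and takes essentially the same route as the paper: the first isomorphism is exactly the observation that the word is the Coulomb idempotent, so the module is $\coul$ itself and localizes to $\mathcal{O}$, and the second is obtained by exhibiting a minimal twist-degree-one diagram as a generating, hence nowhere-vanishing, section of $\tilting^{\textcolor{red}{2}\textcolor{red}{2}12^{(2)}3}\otimes\mathcal{L}$, which trivializes that bundle. Your extra scaffolding (rank one via \Cref{lem:tilting-rank} and $\operatorname{Pic}\cong\Z$, reduction to $U_{13}\cup U_{24}$ by the codimension-two argument, Nakayama for generation on each patch) merely repackages the paper's one-line check that the displayed diagram has no relevant intersections and so trivializes the bundle on every patch $U_{ij}$.
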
 
\subsection{Rank 2 cases}
\label{sec:non-trivial}

In the following four subsections, we apply the algorithm \ref{algorithm13} and its modification to the patch $U_{24}$ to each of the remaining idempotents identified in \eqref{eq:rank-2-words} to find explicit diagrams for each generator $\Zij{s}{ij}{k}$. The choice of generators produced is not unique.

Once sets of generators for $M^\Bi_{13}$ and $M^\Bi_{24}$ have been found, we use the KLRW relations to find the transition matrices $\gamma^{13}_{24}[\Bi]$ for the vector bundles $\tilting^\Bi$. As explained in Section 2, this is sufficient to identify each vector bundle up to isomorphism.

To do this, we formulate an ansatz for the equations that determine the transition functions. This is achieved by considering the winding and twisting degrees of the possible diagrams allowed in each equation. Since the winding and twisting gradings must agree on each side of the equation, one can deduce which coordinate diagrams may appear as coefficients to the $\Zij{s}{24}{k}$ diagrams by considering the winding degrees appearing in a given $\Zij{s}{13}{k}$ diagram. Since $\coord_{13}$ and $\coord_{24}$ are invertible on $U_{13}\cap U_{24}$, we may freely multiply $\Zij{s}{13}{k}$ on the LHS by powers of these coordinate diagrams to increase or decrease the twisting degree as needed.
\subsubsection{Sheaf with idempotent \textcolor{red}{2}2132\textcolor{red}{2}}
Applying algorithm \ref{algorithm13} and its variant to the case where  $\Bi$=\textcolor{red}{2}2132\textcolor{red}{2} produces the following generating diagrams:
\begin{align*}
   &&\Zij{1}{13}{1}&=\begin{array}{c}  
 \tikz[very thick,xscale=2,yscale=2]{
            \draw (-1,-1)-- (1,-1);
            \draw (-1,1)-- (1,1);
          \draw[dashed] (-1,-1)-- (-1,1);
           \draw[red] (-.8,-1) .. controls (-.8,-1) and (-.8,-.4) ..node[below, at start]{$r_L$} (1,0.25);
            \draw (-.48,-1) .. controls (-.48,-1) and (-.48,-0.5) ..node[below, at start]{$2$} (1,0);
            \draw (-0.16,-1) .. controls (-.16,-1) and (-.16,-0.6) ..node[below, at start]{$1$} 
          (1,-.25);
           \draw (.16,-1) .. controls (.16,-1) and (.16,-.75) ..node[below, at start]{$3$}
          (1,-.5);
           \draw (.48,-1).. controls (.48,-1) and (0,0) .. node[below, at start]{$2$}node[above, at end]{$2$}
          (0,1);
          \draw[red] (.8 ,-1)--node[below, at start]{$r_R$} (.8,1);
          \draw[red] (-1,0.25) .. controls (-1,0.25) and (-0.8,0.6) .. (-0.8,1);
          \draw (-1,0.75) .. controls (-1,0.8) and (-.16,0.9) ..node[above, at end]{$1$} (-.16,1);
          \draw (-1,0) .. controls (-1,0) and (0,0.5) ..node[above, at end]{$2$} (0,1);
          \draw (-1,-0.25)-- (1,.75);
          \draw (-1,-.5) .. controls (-1,-.5) and (0.16,.25) ..node[above, at end]{$3$} (0.16,1);
          \draw[dashed] (1,1)-- (1,-1);
          }
   \end{array}& \Zij{2}{13}{1}&=\begin{array}{c}  
 \tikz[very thick,xscale=2,yscale=2]{
            \draw (-1,-1)-- (1,-1);
            \draw (-1,1)-- (1,1);
          \draw[dashed] (-1,-1)-- (-1,1);
           \draw[red] (-.8,-1) .. controls (-.8,-1) and (-.8,-0.25) ..node[below, at start]{$r_L$} (1,0.5);
            \draw (-.48,-1) .. controls (-.48,-1) and (0,0) ..node[below, at start]{$2$}node[above, at end]{$2$} (0,1);
            \draw (-0.16,-1) .. controls (-.16,-1) and (-.16,-0.5) ..node[below, at start]{$1$} 
          (1,0);
           \draw (.16,-1)--node[below, at start]{$3$}node[above, at end]{$3$} 
          (.16,1);
           \draw (.48,-1) .. controls (.48,-1) and (.48, -.75) ..node[below, at start]{$2$}
          (1,-0.5);
          \draw[red] (.8 ,-1)--node[below, at start]{$r_R$} (.8,1);
          \draw[red] (-1,0.5) .. controls (-1,0.5) and (-0.8,0.75) .. (-0.8,1);
          \draw (-1,0.0) .. controls (-1,0.0) and (-.16,0.5) ..node[above, at end]{$1$} (-.16,1);
          \draw (-1,-0.5) .. controls (-1,-.5) and (0,0) .. (0,1);
          \draw[dashed] (1,1)-- (1,-1);
          }
   \end{array}&
\end{align*}
\begin{align*}
   &&\Zij{1}{24}{1}&=\begin{array}{c}  
 \tikz[very thick,xscale=2,yscale=2]{
            \draw (-1,-1)-- (1,-1);
            \draw (-1,1)-- (1,1);
          \draw[dashed] (-1,-1)-- (-1,1);
           \draw[red] (-.8,-1) .. controls (-.8,-1) and (-.8,-.25) ..node[below, at start]{$r_L$} (1,0.5);
            \draw (-.48,-1) .. controls (-.48,-1) and (-.48,-.5,0) ..node[below, at start]{$2$} (1,0);
            \draw (-0.16,-1)-- node[below, at start]{$1$}node[above, at end]{$1$} (-.16,1);
           \draw (.16,-1).. controls (.16,-1) and (.16,-.75).. node[below, at start]{$3$} (-1,-.5);
           \draw (.48,-1) .. controls (.48,-1) and (0,0) ..node[below, at start]{$2$}
          (0,1);
          \draw[red] (.8 ,-1)--node[below, at start]{$r_R$} (.8,1);
          \draw[red] (-1,0.5) .. controls (-1,0.5) and (-0.8,0.75) .. (-0.8,1);
          \draw (-1,0.0) .. controls (-1,0.0) and (0,0.5) ..node[above, at end]{$2$} (0,1);
          \draw (1,-0.5) .. controls (1,-.5) and (.16,.-.25) ..node[above, at end]{$3$} (.16,1);
          \draw[dashed] (1,1)-- (1,-1);
          }
   \end{array}&\Zij{2}{24}{1}&=\begin{array}{c}  
 \tikz[very thick,xscale=2,yscale=2]{
            \draw (-1,-1)-- (1,-1);
            \draw (-1,1)-- (1,1);
          \draw[dashed] (-1,-1)-- (-1,1);
           \draw[red] (-.8,-1) .. controls (-.8,-1) and (-.8,-.25) ..node[below, at start]{$r_L$} (1,0.5);
            \draw (-.48,-1) .. controls (-.48,-1) and (0,0) ..node[below, at start]{$2$}node[above, at end]{$2$} (0,1);
            \draw (-0.16,-1) .. controls (-.16,-1) and (-.16,-.5) ..node[below, at start]{$1$} 
          (1,0);
           \draw (.16,-1)--node[below, at start]{$3$}node[above, at end]{$3$} 
          (.16,1);
           \draw (.48,-1) .. controls (.48,-1) and (.48, -.75) ..node[below, at start]{$2$}
          (1,-0.5);
          \draw[red] (.8 ,-1)--node[below, at start]{$r_R$} (.8,1);
          \draw[red] (-1,0.5) .. controls (-1,0.5) and (-0.8,0.75) .. (-0.8,1);
          \draw (-1,0.0) .. controls (-1,0.0) and (-.16,0.5) ..node[above, at end]{$1$} (-.16,1);
          \draw (-1,-0.5) .. controls (-1,-.5) and (0,0) .. (0,1);
          \draw[dashed] (1,1)-- (1,-1);
          }
   \end{array}&
\end{align*}
\begin{multline*}
    \coord_{24}^2\Zij{1}{13}{1}= \\
       \begin{array}{c}  
 \tikz[very thick,xscale=2,yscale=3]{
            \draw (-1,-1)-- (1,-1);
            \draw (-1,1)-- (1,1);
          \draw[dashed] (-1,-1)-- (-1,1);
           \draw[red] (-.8,-1) .. controls (-.8,-1) and (-.8,-.6) ..node[below, at start]{$r_L$} (1,-.2);
            \draw (-.48,-1) .. controls (-.48,-1) and (-.48,-0.7) ..node[below, at start]{$2$} (1,-.4);
            \draw (-0.16,-1) .. controls (-.16,-1) and (-.16,-0.8) ..node[below, at start]{$1$} 
          (1,-.6);
           \draw (.16,-1) .. controls (.16,-1) and (.16,-.9) ..node[below, at start]{$3$}
          (1,-.8);
           \draw (.48,-1).. controls (.48,-1) and (0,0) .. node[below, at start]{$2$}node[above, at end]{$2$}
          (0,1);
          \draw[red] (.8 ,-1)--node[below, at start]{$r_R$} (.8,1);
          \draw[red] (-1,0.8) .. controls (-1,0.8) and (-0.8,0.8) .. (-0.8,1);
          \draw (-1,0) .. controls (-1,0) and (-.45,0) ..node[above, at end]{$1$} (-.45,1);
          \draw (-1,0.6) .. controls (-1,0.6) and (0,0.6) ..node[above, at end]{$2$} (0,1);
          \draw (-1,-.8) .. controls (-1,-.6) and (.9,-.4) ..node[above, at end]{$3$} (0.45,1);
          \draw[dashed] (1,1)-- (1,-1);
          \draw (-1,-.6)-- (1,0); 
          \draw (-1,-.4)-- (1,.2); 
          \draw[red] (-1,-.2)-- (1,.4); 
          \draw (-1,.2)-- (1,.6); 
          \draw[red] (-1,.4)-- (1,.8); 
          }
   \end{array}=  \begin{array}{c}  
 \tikz[very thick,xscale=2,yscale=3]{
            \draw (-1,-1)-- (1,-1);
            \draw (-1,1)-- (1,1);
          \draw[dashed] (-1,-1)-- (-1,1);
           \draw[red] (-.8,-1) .. controls (-.8,-1) and (-.8,-.6) ..node[below, at start]{$r_L$} (1,-.2);
            \draw (-.48,-1) .. controls (-.48,-1) and (-.48,-0.7) ..node[below, at start]{$2$} (1,-.4);
            \draw (-0.16,-1) .. controls (-.16,-1) and (-.16,-0.8) ..node[below, at start]{$1$} 
          (1,-.6);
           \draw (.16,-1) .. controls (.16,-1) and (.16,-.9) ..node[below, at start]{$3$}
          (1,-.8);
           \draw (.48,-1).. controls (.48,-1) and (0,0) .. node[below, at start]{$2$}node[above, at end]{$2$}
          (0,1);
          \draw[red] (.8 ,-1)--node[below, at start]{$r_R$} (.8,1);
          \draw[red] (-1,0.8) .. controls (-1,0.8) and (-0.8,0.8) .. (-0.8,1);
          \draw (-1,0) .. controls (-1,0) and (-.45,0) ..node[above, at end]{$1$} (-.45,1);
          \draw (-1,0.6) .. controls (-1,0.6) and (0,0.6) ..node[above, at end]{$2$} (0,1);
          \draw (-1,-.8) .. controls (-1,-.5) and (.4,-.4) ..node[above, at end]{$3$} (0.45,1);
          \draw[dashed] (1,1)-- (1,-1);
          \draw (-1,-.6)-- (1,0); 
          \draw (-1,-.4)-- (1,.2); 
          \draw[red] (-1,-.2)-- (1,.4); 
          \draw (-1,.2)-- (1,.6); 
          \draw[red] (-1,.4)-- (1,.8); 
          }
   \end{array}-  \begin{array}{c}  
 \tikz[very thick,xscale=2,yscale=3]{
            \draw (-1,-1)-- (1,-1);
            \draw (-1,1)-- (1,1);
          \draw[dashed] (-1,-1)-- (-1,1);
           \draw[red] (-.8,-1) .. controls (-.8,-1) and (-.8,-.6) ..node[below, at start]{$r_L$} (1,-.2);
            \draw (-.48,-1) .. controls (-.48,-1) and (-.48,-0.7) ..node[below, at start]{$2$} (1,-.4);
            \draw (-0.16,-1) .. controls (-.16,-1) and (-.16,-0.8) ..node[below, at start]{$1$} 
          (1,-.6);
           \draw (.16,-1) .. controls (.16,-1) and (.16,-.9) ..node[below, at start]{$3$}
          (1,-.8);
           \draw (.48,-1).. controls (.48,-1) and (.48,-.6) .. node[below, at start]{$2$} (1,.2);
          \draw[red] (.8 ,-1)--node[below, at start]{$r_R$} (.8,1);
          \draw[red] (-1,0.8) .. controls (-1,0.8) and (-0.8,0.8) .. (-0.8,1);
          \draw (-1,0) .. controls (-1,0) and (-.45,0) ..node[above, at end]{$1$} (-.45,1);
          \draw (-1,0.6) .. controls (-1,0.6) and (0,0.6) ..node[above, at end]{$2$} (0,1);
          \draw (-1,-.4) .. controls (-1,-.4) and (0,-.4) ..node[above, at end]{$2$} (0,1);
          \draw (-1,-.8) .. controls (-1,-.6) and (.9,-.4) ..node[above, at end]{$3$} (0.45,1);
          \draw[dashed] (1,1)-- (1,-1);
          \draw (-1,-.6)-- (1,0); 
          \draw[red] (-1,-.2)-- (1,.4); 
          \draw (-1,.2)-- (1,.6); 
          \draw[red] (-1,.4)-- (1,.8); 
          }
   \end{array}\\
   =\coord_{13}^2\Zij{1}{24}{1}+(\bullet_1-\bullet_2+\bullet_3)\coord_{24}\coord_{13}\Zij{2}{24}{1}.
\end{multline*}
\Aiden{for the final version of the paper I'll type up an extra step for this calculation showing where the dots appear. I won't worry about this for the thesis version though as it's a little fiddly.}
Several important identities were used in the above calculation. The most important is the relation \eqref{triple-dumb} for $ijk=232$. Secondly, the bigon relations \eqref{black-bigon} and \eqref{cost} were applied to arrive at the final result.

Upon inspection, we immediately see that $\Zij{2}{13}{1}=\Zij{2}{24}{1}$ which implies that the vector bundle contains a line subbundle. Expressing these equations using generators with winding degree zero yields
\begin{align*}
    &&\coord_{24}\Zij{1}{13}{0}&=\coord_{13}\Zij{1}{24}{0}+(\bullet_1-\bullet_2+\bullet_3)\coord_{24}\Zij{2}{24}{0},& \coord_{13}\Zij{2}{13}{0}&=\coord_{24}\Zij{2}{24}{0}.&
\end{align*}
From these we can deduce the transition matrix is
\begin{align*}
    &&\gamma^{24}_{13}[\textcolor{red}{2}2132\textcolor{red}{2}]&=\left(\begin{array}{cc}
        \coord_{13}/\coord_{24} & \bullet_1-\bullet_2+\bullet_3  \\
         0 & \coord_{24}/\coord_{13}
    \end{array}\right)&\\
    &\implies& \gamma_{24}^{13}[\textcolor{red}{2}2132\textcolor{red}{2}]&=\left(\begin{array}{cc}
      \coord_{24}/\coord_{13}   & -\bullet_1+\bullet_2-\bullet_3 \\
        0 & \coord_{13}/\coord_{24} 
    \end{array}\right).&
\end{align*}
\begin{align*}
    &&\coord_{13}\Zij{1}{13}{0}&=\coord_{24}\Zij{1}{24}{0},&\coord_{24}\Zij{2}{13}{0}&=(\bullet_1-\bullet_2+\bullet_3)\coord_{24}\Zij{1}{24}{0}+\coord_{13}\Zij{2}{24}{0}.&
\end{align*}
Lemma \ref{indecomp-ext} then tells us that
\begin{lemma}\label{lem:331}
    $\tilting^{\color{red}2\color{black}2132\color{red}2}\cong \mathcal{H}\otimes\mathcal{L}.$
\end{lemma}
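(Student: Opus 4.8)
The plan is to extract the isomorphism class of $\tilting^{\textcolor{red}{2}2132\textcolor{red}{2}}$ from a gluing matrix on $U_{13}\cap U_{24}$ and then match it against the distinguished extension $\mathcal{H}\otimes\mathcal{L}$ singled out in \Cref{indecomp-ext}. First I would invoke the freeness statement proved above: the diagrams $\Zij{1}{13}{k},\Zij{2}{13}{k}$ and their $(2,4)$-analogues $\Zij{1}{24}{k},\Zij{2}{24}{k}$ (produced by \Cref{algorithm13} and its $U_{24}$-variant) are free bases of $M^{\Bi}_{13}$ and $M^{\Bi}_{24}$, so the gluing of $\tilting^{\Bi}$ over $U_{13}\cap U_{24}$ is exactly the matrix expressing one basis in terms of the other. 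Carrying out the KLRW computation of $\coord_{24}^2\Zij{1}{13}{1}$ displayed above --- whose only nonformal inputs are the triple-point relation \eqref{triple-dumb} for $ijk=232$ and the black bigon relations \eqref{black-bigon}, \eqref{cost} --- together with the evident identity $\Zij{2}{13}{1}=\Zij{2}{24}{1}$, and dividing out the appropriate powers of $\coord_{13},\coord_{24}$, yields
\[
\gamma_{24}^{13}[\textcolor{red}{2}2132\textcolor{red}{2}]=\begin{pmatrix}\coord_{24}/\coord_{13} & -\bullet_1+\bullet_2-\bullet_3\\ 0 & \coord_{13}/\coord_{24}\end{pmatrix}.
\]
Using the identification $\coord_{ij}=\wcoor_{ij}$ established earlier, this is literally a matrix of the form \eqref{eq:HL-transition}, with off-diagonal function $g=-\bullet_1+\bullet_2-\bullet_3$; in particular $\tilting^{\Bi}$ is realized as an equivariant extension of the line bundles $\mathcal{L}^{\pm 1}$.

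Next I would check the two features that make \Cref{indecomp-ext} applicable. By the scaling grading \eqref{eq:KLRW-grading} each dot has degree $2$, so $g=-\bullet_1+\bullet_2-\bullet_3$ has scaling degree $2$; under the identification of gradings in \Cref{th:NT-iso} this is precisely weight $2$ for the squared scaling $\C^*$-action, so the extension carries the equivariant structure demanded in \Cref{indecomp-ext}. Moreover $\tilting^{\Bi}$ is indecomposable --- this is the content of the lemma proved for the words in \eqref{eq:line-words}--\eqref{eq:rank-2-words}, since $\End(\tilting^{\Bi})$ has the identity as its only element of scaling degree $0$ --- so the extension is non-split, i.e.\ $g$ represents a nonzero class that cannot be removed by a change of frame. \Cref{indecomp-ext} asserts that there is a \emph{unique} indecomposable extension of this shape equivariant for the squared scaling, namely $\mathcal{H}\otimes\mathcal{L}$, and hence $\tilting^{\textcolor{red}{2}2132\textcolor{red}{2}}\cong\mathcal{H}\otimes\mathcal{L}$.

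The main obstacle is the diagrammatic gluing computation itself: pulling the product $\coord_{24}^2\Zij{1}{13}{1}$ taut, applying the $232$ triple-point move and the two bigon relations in the right order, and --- the genuinely delicate part --- tracking exactly which dots are produced, so that the off-diagonal coefficient comes out as $-\bullet_1+\bullet_2-\bullet_3$ rather than some other degree-$2$ combination. Once the gluing matrix is in hand, the rest is soft: the only conceptual input is that indecomposability upgrades ``transition matrix of the form \eqref{eq:HL-transition}'' to ``isomorphic to the unique non-split such extension,'' which is exactly \Cref{indecomp-ext}.
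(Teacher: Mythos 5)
Your proposal is correct and follows essentially the same route as the paper: compute the free bases $\Zij{s}{13}{k},\Zij{s}{24}{k}$ via \Cref{algorithm13}, use the $232$ triple-point and bigon relations to express $\coord_{24}$-multiples of the $(1,3)$ generators in the $(2,4)$ basis, read off the upper-triangular transition matrix with off-diagonal entry $\pm(\bullet_1-\bullet_2+\bullet_3)$, and conclude by the uniqueness statement of \Cref{indecomp-ext}. Your explicit verification that the off-diagonal term has scaling weight $2$ and that indecomposability of $\tilting^{\Bi}$ rules out the split extension is exactly what the paper leaves implicit when it invokes \Cref{indecomp-ext}.
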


\subsubsection{Sheaf with idempotent \textcolor{red}{22}2312}
Applying algorithm \ref{algorithm13} and its variant to $\Bi=$\textcolor{red}{22}2312 yields the following set of generators:
\begin{align*}
&&\Zij{1}{13}{1}&=\begin{array}{c}  
 \tikz[very thick,xscale=2,yscale=2]{
            \draw (-1,-1)-- (1,-1);
            \draw (-1,1)-- (1,1);
          \draw[dashed] (-1,-1)-- (-1,1);
          \draw[dashed] (1,1)-- (1,-1);
           \draw (-1,0.75).. controls (-1,0.75) and (-.16,0.875).. node[above, at end]{$1$} (-.16,1);
           \draw[red] (-1,0.5).. controls (-1,0.5) and (-0.8,0.75).. (-0.8,1);
           \draw (-1,0.25).. controls (-1,0.25) and (0,0.625).. node[above, at end]{$2$} (0,1);
           \draw (-1,0)-- (1,0.75);
           \draw (-1,-0.25).. controls (-1,-0.25) and (0,0.375).. (0,1);
           \draw (-1,-.5).. controls (-1,-.5) and (.16,.25).. node[above, at end]{$3$} (.16,1);
           \draw[red] (-.8,-1).. controls (-.8,-1) and (-.8,-.25).. node[below, at start]{$r_L$} (1,0.5);
           \draw[red] (-.48,-1)-- node[below, at start]{$r_R$} (.8,1);
           \draw (-.16,-1).. controls (-.16,-1) and (-.16,-.375).. node[below, at start]{$2$} (1,0.25);
           \draw (.16,-1).. controls (.16,-1) and (.16,-0.75).. node[below, at start]{$3$} (1,-.5);
           \draw (.48,-1).. controls (.48,-1) and (.48,-.5).. node[below, at start]{$1$} (1,0);
           \draw (.8,-1).. controls (.8,-1) and (.8, -.625).. node[below, at start]{$2$} (1,-.25);
          }
   \end{array}&
   &&\Zij{2}{13}{2}&=\begin{array}{c}  
 \tikz[very thick,xscale=2,yscale=2]{
            \draw (-1,-1)-- (1,-1);
            \draw (-1,1)-- (1,1);
          \draw[dashed] (-1,-1)-- (-1,1);
          \draw[dashed] (1,1)-- (1,-1);
          \draw[red] (-1,.75).. controls (-1,.75) and (-0.8,.875).. (-0.8,1);
          \draw (-1,.5).. controls (-1,.5) and (-.16,.75).. node[above, at end]{$1$} (-.16,1);
          \draw (-1,.25).. controls (-1,.25) and (0,.625).. node[above, at end]{$2$} (0,1);
          \draw (-1,0).. controls (-1,0) and (.16,0.5).. node[above, at end]{$3$} (0.16,1);
          \draw[red] (-1,-0.25)-- (1,0.75);
          \draw (-1,-0.5)-- (1,0.5);
          \draw (-1,-0.75)-- (1,0.25);
          \draw[red] (-0.8,-1).. controls (-0.8,-1) and (-0.8,-.625).. node[below, at start]{$r_L$} (1,-0.25);
          \draw[red] (-.48,-1)-- node[below, at start]{$r_R$} (0.8,1);
          \draw (-.16,-1).. controls (-.16,-1) and (-.16,-.375).. node[below, at start]{$2$} (1,0.25);
          \draw (0.16,-1).. controls (0.16,-1) and (0.16,-0.5).. node[below, at start]{$3$} (1,0);
          \draw (.48,-1).. controls (.48,-1) and (.48,-.75).. node[below, at start]{$1$} (1,-.5);
          \draw (.8,-1).. controls (.8,-1) and (.8,-.875).. node[below, at start]{$2$} (1,-.75);
          }
   \end{array}&
\end{align*}
\begin{align*}
&&\Zij{1}{24}{1}&=\begin{array}{c}  
 \tikz[very thick,xscale=2,yscale=2]{
            \draw (-1,-1)-- (1,-1);
            \draw (-1,1)-- (1,1);
          \draw[dashed] (-1,-1)-- (-1,1);
          \draw[dashed] (1,1)-- (1,-1);
          \draw[red] (-1,.6).. controls (-1,.6) and (-.8,.8).. (-.8,1);
          \draw (-1,.2).. controls (-1,.2) and (-.16,.6).. node[above, at end]{$1$} (-.16,1);
          \draw (-1,-.2).. controls (-1,-.2) and (0,.4).. node[above, at end]{$2$} (0,1);
          \draw (-1,-.6).. controls (-1,-.6) and (0, .2).. (0,1);
          \draw[red] (-.8,-1).. controls (-.8,-1) and (-.8, -.2).. node[below, at start]{$r_L$} (1,.6);
          \draw[red] (-.48, -1)-- node[below, at start]{$r_R$} (.8,1);
          \draw (-.16,-1).. controls (-.16,-1) and (-.16, -.6).. node[below, at start]{$2$} (1,-.2);
          \draw (.16,-1)-- node[below, at start]{$3$}node[above, at end]{$3$} (.16,1);
          \draw (.48,-1).. controls (.48,-1) and (.48, -.4).. node[below, at start]{$1$} (1,.2);
          \draw (.8,-1).. controls (.8,-1) and (.8,-.8).. node[below, at start]{$2$} (1,-.6);
          }
   \end{array}&
   &&\Zij{2}{24}{2}&=\begin{array}{c}  
 \tikz[very thick,xscale=2,yscale=2]{
            \draw (-1,-1)-- (1,-1);
            \draw (-1,1)-- (1,1);
          \draw[dashed] (-1,-1)-- (-1,1);
          \draw[dashed] (1,1)-- (1,-1);
          \draw[red] (-1,.75).. controls (-1,.75) and (-.8,.875).. (-.8,1);
          \draw (-1,.5).. controls (-1,.5) and (-.16,.75).. node[above, at end]{$1$} (-.16,1);
          \draw (-1,.25).. controls (-1,.25) and (0,.625).. node[above, at end]{$2$} (0,1);
          \draw (-1,0).. controls (-1,0) and (.16, .5).. node[above, at end]{$3$} (.16,1);
          \draw[red] (-1,-.25)-- (1,.75);
          \draw (-1,-.5)-- (1,.5);
          \draw (-1,-.75)-- (1,.25);
          \draw[red] (-.8,-1).. controls (-.8,-1) and (-.8, -.625).. node[below, at start]{$r_L$} (1,-.25);
          \draw[red] (-.48,-1)-- node[below, at start]{$r_R$} (.8,1);
          \draw (-.16,-1).. controls (-.16,-1) and (-.16, -.375).. node[below, at start]{$2$} (1,.25);
          \draw (.16,-1).. controls (.16,-1) and (.16,-.5).. node[below, at start]{$3$} (1,0);
          \draw (.48,-1).. controls (.48,-1) and (.48,-.75).. node[below, at start]{$1$} (1,-.5);
          \draw (.8,-1).. controls (.8,-1) and (.8,-.875).. node[below, at start]{$2$} (1,-.75);
          }
   \end{array}&
\end{align*}
To compute the transition matrix, we proceed similarly to before:
\begin{multline*}
    \coord_{24}\Zij{1}{13}{1}=\\
\begin{array}{c}  
 \tikz[very thick,xscale=2,yscale=3]{
            \draw (-1,-1)-- (1,-1);
            \draw (-1,1)-- (1,1);
          \draw[dashed] (-1,-1)-- (-1,1);
          \draw[dashed] (1,1)-- (1,-1);
           \draw (-1,0.33).. controls (-1,0.33) and (-.4,0.67).. node[above, at end]{$1$} (-.4,1);
           \draw[red] (-1,0.78).. controls (-1,0.78) and (-0.8,0.89).. (-0.8,1);
           \draw (-1,-.7).. controls (-1,-.7) and (0,0.15).. node[above, at end]{$2$} (0,1);
           \draw (-1,.56).. controls (-1,.56) and (0,0.78).. (0,1); 
           \draw (-1,-.56).. controls (-1,-.56) and (.4,.22).. node[above, at end]{$3$} (.4,1);
           \draw[red] (-.8,-1).. controls (-.8,-1) and (-.8,-.44).. node[below, at start]{$r_L$} (1,0.11);
           \draw[red] (-.48,-1)-- node[below, at start]{$r_R$} (.8,1);
           \draw (-.16,-1).. controls (-.16,-1) and (-.16,-.56).. node[below, at start]{$2$} (1,-.11);
           \draw (.16,-1).. controls (.16,-1) and (.16,-0.78).. node[below, at start]{$3$} (1,-.56);
           \draw (.48,-1).. controls (.48,-1) and (.48,-.67).. node[below, at start]{$1$} (1,-.33);
           \draw (.8,-1).. controls (.8,-1) and (.8, -.89).. node[below, at start]{$2$} (1,-.7);
            \draw (-1,-.33)-- (1,0.33); 
            \draw (-1,-.11)-- (1,0.56); 
            \draw[red] (-1,.11)-- (1,0.78); 
          }
   \end{array}=\begin{array}{c}  
 \tikz[very thick,xscale=2,yscale=3]{
            \draw (-1,-1)-- (1,-1);
            \draw (-1,1)-- (1,1);
          \draw[dashed] (-1,-1)-- (-1,1);
          \draw[dashed] (1,1)-- (1,-1);
           \draw (-1,0.33).. controls (-1,0.33) and (-.4,0.67).. node[above, at end]{$1$} (-.4,1);
           \draw[red] (-1,0.78).. controls (-1,0.78) and (-0.8,0.89).. (-0.8,1);
           \draw (-1,-.7).. controls (-1,-.7) and (0,0.15).. node[above, at end]{$2$} (0,1);
           \draw (-1,.56).. controls (-1,.56) and (0,0.78).. (0,1); 
           \draw (-1,-.56).. controls (-1,.22) and (.4,.22).. node[above, at end]{$3$} (.4,1);
           \draw[red] (-.8,-1).. controls (-.8,-1) and (-.8,-.44).. node[below, at start]{$r_L$} (1,0.11);
           \draw[red] (-.48,-1)-- node[below, at start]{$r_R$} (.8,1);
           \draw (-.16,-1).. controls (-.16,-1) and (-.16,-.56).. node[below, at start]{$2$} (1,-.11);
           \draw (.16,-1).. controls (.16,-1) and (.16,-0.78).. node[below, at start]{$3$} (1,-.56);
           \draw (.48,-1).. controls (.48,-1) and (.48,-.67).. node[below, at start]{$1$} (1,-.33);
           \draw (.8,-1).. controls (.8,-1) and (.8, -.89).. node[below, at start]{$2$} (1,-.7);
            \draw (-1,-.33)-- (1,0.33); 
            \draw (-1,-.11)-- (1,0.56); 
            \draw[red] (-1,.11)-- (1,0.78); 
          }
   \end{array}-\begin{array}{c}  
 \tikz[very thick,xscale=2,yscale=3]{
            \draw (-1,-1)-- (1,-1);
            \draw (-1,1)-- (1,1);
          \draw[dashed] (-1,-1)-- (-1,1);
          \draw[dashed] (1,1)-- (1,-1);
           \draw (-1,0.33).. controls (-1,0.33) and (-.4,0.67).. node[above, at end]{$1$} (-.4,1);
           \draw[red] (-1,0.78).. controls (-1,0.78) and (-0.8,0.89).. (-0.8,1);
           \draw (-1,-.11).. controls (-1,-.11) and (0,0.445).. node[above, at end]{$2$} (0,1);
           \draw (-1,.56).. controls (-1,.56) and (0,0.78).. (0,1); 
           \draw (-1,-.56).. controls (-1,-.56) and (.4,.22).. node[above, at end]{$3$} (.4,1);
           \draw[red] (-.8,-1).. controls (-.8,-1) and (-.8,-.44).. node[below, at start]{$r_L$} (1,0.11);
           \draw[red] (-.48,-1)-- node[below, at start]{$r_R$} (.8,1);
           \draw (-.16,-1).. controls (-.16,-1) and (-.16,-.56).. node[below, at start]{$2$} (1,-.11);
           \draw (.16,-1).. controls (.16,-1) and (.16,-0.78).. node[below, at start]{$3$} (1,-.56);
           \draw (.48,-1).. controls (.48,-1) and (.48,-.67).. node[below, at start]{$1$} (1,-.33);
           \draw (.8,-1).. controls (.8,-1) and (.8, -.89).. node[below, at start]{$2$} (1,-.7);
            \draw (-1,-.33)-- (1,0.33); 
            \draw (-1,-.7).. controls(-1,-.7) and (0,-.25).. (1,.56);
            \draw[red] (-1,.11)-- (1,0.78); 
          }
   \end{array}\\
   =\coord_{13}\Zij{1}{24}{1}+(\bullet_1-\bullet_2+\bullet_3)\Zij{2}{24}{2}.
\end{multline*}
Similarly to the previous case, in the above we have applied the relation \eqref{triple-dumb} with $ijk=232$ as well as the bigon relations \eqref{black-bigon} and \eqref{cost}.

Also similar to the previous case, we note that $\Zij{2}{13}{2}=\Zij{2}{24}{2}$ and so we again have a line subbundle. Rewriting these equations using a basis comprised of winding degree 0 generators yields:
\begin{align*}
    &&\coord_{13}\Zij{1}{13}{0}&=\coord_{13}\Zij{1}{24}{0}+(\bullet_1-\bullet_2+\bullet_3)\coord_{24}\Zij{2}{24}{0},&\coord_{13}^2\Zij{2}{13}{0}&=\coord_{24}^2\Zij{2}{24}{0}.&
\end{align*}
The transition matrix of this vector bundle on these patches is hence given by:
\begin{align*}
    &&\gamma_{13}^{24}[\textcolor{red}{22}2312]&=\left(\begin{array}{cc}
        1 & (\bullet_1-\bullet_2+\bullet_3)\coord_{24}/\coord_{13}  \\
        0 & \coord_{24}^2/\coord_{13}^2
    \end{array}\right)&\\
    &\implies &\gamma_{24}^{13}[\textcolor{red}{22}2312]&=\left(\begin{array}{cc}
        1 & -(\bullet_1-\bullet_2+\bullet_3)\coord_{13}/\coord_{24}  \\
        0 & \coord_{13}^2/\coord_{24}^2
    \end{array}\right).&
\end{align*}
Lemma \ref{indecomp-ext} then tells us that
\begin{lemma}\label{lem:332}
    $\tilting^{\color{red}22\color{black}2312}\cong \mathcal{H}$.
\end{lemma}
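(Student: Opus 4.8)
The plan is to pin down $\tilting^{\textcolor{red}{22}2312}$ from its \v{C}ech transition matrix on the cover $\{U_{13},U_{24}\}$ and then match that matrix against the normal form of $\mathcal{H}$ supplied by \Cref{indecomp-ext}. First I would run \Cref{algorithm13} and its $U_{24}$-variant for $\Bi=\textcolor{red}{22}2312$ to obtain explicit free bases $\Zij{1}{13}{k},\Zij{2}{13}{k}$ of $M^{\Bi}_{13}$ and $\Zij{1}{24}{k},\Zij{2}{24}{k}$ of $M^{\Bi}_{24}$, and then simplify the stacked diagram $\coord_{24}\,\Zij{1}{13}{1}$ inside $e\mathbf{R}e(\Bi)$; the outcome should be
\[\coord_{24}\,\Zij{1}{13}{1}=\coord_{13}\,\Zij{1}{24}{1}+(\bullet_1-\bullet_2+\bullet_3)\,\Zij{2}{24}{2},\qquad \Zij{2}{13}{2}=\Zij{2}{24}{2},\]
so that, rewriting in the winding-degree-$0$ bases, the transition matrix is
\[\gamma_{24}^{13}[\textcolor{red}{22}2312]=\begin{pmatrix}1 & -(\bullet_1-\bullet_2+\bullet_3)\coord_{13}/\coord_{24}\\ 0 & \coord_{13}^2/\coord_{24}^2\end{pmatrix}.\]

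With the matrix in hand the identification is short. Since the chosen isomorphism $T^*X\cong\tilde{\fM}$ carries $e_{ij}$ to $\coord_{ij}$, \Cref{transition-lemma-2}(1) identifies the globally defined sub-line-bundle spanned by $\Zij{2}{13}{0}=(\coord_{24}^2/\coord_{13}^2)\,\Zij{2}{24}{0}$ with $\mathcal{L}^{-2}$ and the quotient spanned by the image of $\Zij{1}{13}{0}$ with $\mathcal{O}$, so $\tilting^{\textcolor{red}{22}2312}$ sits in an extension $0\to\mathcal{L}^{-2}\to\tilting^{\textcolor{red}{22}2312}\to\mathcal{O}\to0$. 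I would then argue this extension is non-split and of the right weight: it is non-split because $\tilting^{\textcolor{red}{22}2312}$ is indecomposable (the lemma asserting indecomposability of the $\tilting^{\Bi}$ for the words in \eqref{eq:rank-2-words}), so its off-diagonal entry $(\bullet_1-\bullet_2+\bullet_3)\coord_{13}/\coord_{24}$ represents a nonzero class in $\Ext^1_{T^*X}(\mathcal{O},\mathcal{L}^{-2})$; and that class has scaling degree $2$ (a single dot $\bullet_i$ has degree $2$ and the $\coord_{ij}$ have degree $0$), so by \eqref{eq:ext1} it lies in the one-dimensional minimal-weight ($j=1$) summand $\Gamma(X;\mathcal{O})\cong\C$ and is hence a unit multiple of the canonical generator. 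Since rescaling an extension class does not change the isomorphism type of the total space, the uniqueness part of \Cref{indecomp-ext} (equivalently, the characterization of $\mathcal{H}$ by $H^1(T^*X;\mathcal{H})=0$) forces $\tilting^{\textcolor{red}{22}2312}\cong\mathcal{H}$.

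The one genuinely delicate step is the KLRW simplification of $\coord_{24}\,\Zij{1}{13}{1}$ that produces the transition matrix: the two strands labelled $2$ in that product cross, and I would resolve the crossing with the $ijk=232$ case of \eqref{triple-dumb}, then clear the black–black and red–black bigons it creates using \eqref{black-bigon} and \eqref{cost}, carefully tracking the signs and positions of the dots generated along the way. Everything after that is mechanical and structurally parallel to \Cref{lem:331}; the only change is that the diagonal of the transition matrix is now $1$ and $\coord_{13}^2/\coord_{24}^2$ rather than $\coord_{24}/\coord_{13}$ and $\coord_{13}/\coord_{24}$, so the associated graded is $\mathcal{O}\oplus\mathcal{L}^{-2}$ instead of $\mathcal{L}\oplus\mathcal{L}^{-1}$ and no twist by $\mathcal{L}$ is needed at the end.
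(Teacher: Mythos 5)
Your proposal is correct and follows essentially the same route as the paper: the same generators from \Cref{algorithm13}, the same two relations $\coord_{24}\Zij{1}{13}{1}=\coord_{13}\Zij{1}{24}{1}+(\bullet_1-\bullet_2+\bullet_3)\Zij{2}{24}{2}$ and $\Zij{2}{13}{2}=\Zij{2}{24}{2}$, the same transition matrix, and then the identification with $\mathcal{H}$ via \Cref{indecomp-ext}. The only difference is that you spell out the final step (non-splitness from indecomposability plus the scaling-degree-$2$ count placing the class in the one-dimensional $j=1$ summand of \eqref{eq:ext1}) which the paper leaves implicit in its citation of \Cref{indecomp-ext}.
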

\subsubsection{Sheaf with idempotent \textcolor{red}{2}23\textcolor{red}{2}21}
Applying algorithm \ref{algorithm13} and its variant to $\Bi=$\textcolor{red}{2}23\textcolor{red}{2}21 yields the following set of generators:
\begin{align*}
&&\Zij{1}{13}{1}&=\begin{array}{c}  
 \tikz[very thick,xscale=2,yscale=2]{
            \draw (-1,-1)-- (1,-1);
            \draw (-1,1)-- (1,1);
            \draw[dashed] (1,1)-- (1,-1);
          \draw[dashed] (-1,-1)-- (-1,1);
           \draw[red] (-.8,-1).. controls (-.8,-1) and (-.8,-.25).. node[below, at start]{$r_L$} (1,.5) ;
            \draw (-.48,-1) .. controls (-.48,-1) and (-.48,-0.375) ..node[below, at start]{$2$} (1,0.25);
            \draw (-0.16,-1) .. controls (-.16,-1) and (-.16,-.625) ..node[below, at start]{$3$} (1,-.25);
           \draw[red] (.48,-1)--node[below, at start]{$r_R$} 
          (.48,1);
            \draw (.64 ,-1).. controls (.64,-1) and (.64,-.625).. node[below, at start]{$2$} (1,.25);
          \draw (.8 ,-1).. controls (.8,-1) and (.8,-.5).. node[below, at start]{$1$} (1,0);
          \draw (-1,0.75).. controls (-1,0.75) and (-.16,.75).. node[above, at end]{$1$} (-.16,1);
          \draw[red] (-1,.5).. controls (-1,.5) and (-.8,.5).. (-.8,1);
          \draw (-1,.25).. controls (-1,.25) and (0,.25).. node[above, at end]{$2$} (0,1);
          \draw (-1,0)-- (1,.75);
          \draw (-1,-.25).. controls (-1,-.25) and (.16,-.25).. node[above, at end]{$3$} (.16,1);
          }
   \end{array}&
    &&\Zij{2}{13}{1}&=\begin{array}{c}  
 \tikz[very thick,xscale=2,yscale=2]{
            \draw (-1,-1)-- (1,-1);
            \draw (-1,1)-- (1,1);
            \draw[dashed] (1,1)-- (1,-1);
          \draw[dashed] (-1,-1)-- (-1,1);
           \draw[red] (-.8,-1).. controls (-.8,-1) and (-.8,-.4).. node[below, at start]{$r_L$} (1,.2);
            \draw (-.48,-1) .. controls (-.48,-1) and (0,-1) ..node[below, at start]{$2$}node[above, at end]{$2$} (0,1);
            \draw (-0.16,-1) .. controls (-.16,-1) and (0,-1) ..node[below, at start]{$3$}node[above, at end]{$3$} (.16,1);
           \draw[red] (.48,-1)--node[below, at start]{$r_R$} 
          (.48,1);
            \draw (.64 ,-1).. controls (.64,-1) and (.64,-.6).. node[below, at start]{$2$} (1,-.2);
          \draw (.8 ,-1).. controls (.8,-1) and (.8,-.8).. node[below, at start]{$1$} (1,-.6);
          \draw (-1,0.6).. controls (-1,0.6) and (-.16,.6).. node[above, at end]{$1$} (-.16,1);
          \draw[red] (-1,.2).. controls (-1,.2) and (-.8,.2).. (-.8,1);
          \draw (-1,-.2).. controls (-1,-.2) and (0,-.2).. node[above, at end]{$2$} (0,1);
          \draw (-1,-.6)-- (1,.6);
          }
   \end{array}&
\end{align*}

\begin{align*}
     &&\Zij{1}{24}{1}&=\begin{array}{c}  
 \tikz[very thick,xscale=2,yscale=2]{
            \draw (-1,-1)-- (1,-1);
            \draw (-1,1)-- (1,1);
            \draw[dashed] (1,1)-- (1,-1);
          \draw[dashed] (-1,-1)-- (-1,1);
           \draw[red] (-.8,-1).. controls (-.8,-1) and (-.8,-.33).. node[below, at start]{$r_L$} (1,.33) ;
            \draw (-.48,-1) .. controls (-.48,-1) and (-.48,-0.5) ..node[below, at start]{$2$} (1,0);
            \draw (-0.16,-1) .. controls (-.16,-1) and (0,-1) ..node[below, at start]{$3$}node[above, at end]{$3$} (.16,1);
           \draw[red] (.48,-1)--node[below, at start]{$r_R$} 
          (.48,1);
            \draw (.64 ,-1).. controls (.64,-1) and (.64,-.5).. node[below, at start]{$2$} (1,0);
          \draw (.8 ,-1).. controls (.8,-1) and (.8,-.66).. node[below, at start]{$1$} (1,-.33);
          \draw (-1,0.66).. controls (-1,0.66) and (-.16,.66).. node[above, at end]{$1$} (-.16,1);
          \draw[red] (-1,.33).. controls (-1,.33) and (-.8,.33).. (-.8,1);
          \draw (-1,0).. controls (-1,0) and (0,0).. node[above, at end]{$2$} (0,1);
          \draw (-1,-.33)-- (1,.66);
          }
   \end{array}&
   &&\Zij{2}{24}{1}&=\begin{array}{c}  
 \tikz[very thick,xscale=2,yscale=2]{
            \draw (-1,-1)-- (1,-1);
            \draw (-1,1)-- (1,1);
            \draw[dashed] (1,1)-- (1,-1);
          \draw[dashed] (-1,-1)-- (-1,1);
           \draw[red] (-.8,-1).. controls (-.8,-1) and (-.8,-.25).. node[below, at start]{$r_L$} (1,.5);
            \draw (-.48,-1) .. controls (-.48,-1) and (0,-1) ..node[below, at start]{$2$}node[above, at end]{$2$} (0,1);
            \draw (-0.16,-1) .. controls (-.16,-1) and (0,-1) ..node[below, at start]{$3$}node[above, at end]{$3$} (.16,1);
           \draw[red] (.48,-1)--node[below, at start]{$r_R$} 
          (.48,1);
            \draw (.64 ,-1).. controls (.64,-1) and (.64,-.75).. node[below, at start]{$2$} (1,-.5);
          \draw (.8 ,-1).. controls (.8,-1) and (.8,-.5).. node[below, at start]{$1$} (1,0);
          \draw[red] (-1,.5).. controls (-1,.5) and (-.8,.5).. (-.8,1);
         \draw (-1,0).. controls (-1,0) and (-.16,0).. node[above, at end]{$1$} (-.16,1);
          \draw (-1,-.5).. controls (-1,-.5) and (0,-.5).. node[above, at end]{$2$} (0,1);
          }
   \end{array}&
\end{align*}
To compute the transition matrix $\gamma_{24}^{13}$[\textcolor{red}{2}23\textcolor{red}{2}21], we perform the following manipulations:
\begin{multline*}
    \coord_{24}\Zij{1}{13}{1}=\\
    \begin{array}{c}  
 \tikz[very thick,xscale=2,yscale=2]{
            \draw (-1,-1)-- (1,-1);
            \draw (-1,1)-- (1,1);
            \draw[dashed] (1,1)-- (1,-1);
          \draw[dashed] (-1,-1)-- (-1,1);
           \draw[red] (-.8,-1).. controls (-.8,-1) and (-.8,-.5).. node[below, at start]{$r_L$} (1,0) ;
            \draw (-.48,-1) .. controls (-.48,-1) and (-.48,-0.625) ..node[below, at start]{$2$} (1,-0.25);
            \draw (-0.16,-1) .. controls (-.16,-1) and (-.16,-.875) ..node[below, at start]{$3$} (1,-.75);
           \draw[red] (.48,-1)--node[below, at start]{$r_R$} (.48,1);
            \draw (.64 ,-1).. controls (.64,-1) and (.64,-.625).. node[below, at start]{$2$} (1,-.25);
          \draw (.8 ,-1).. controls (.8,-1) and (.8,-.75).. node[below, at start]{$1$} (1,-.5);
          \draw (-1,0.25).. controls (-1,0.25) and (-.16,.25).. node[above, at end]{$1$} (-.16,1);
          \draw[red] (-1,.75).. controls (-1,.75) and (-.8,.75).. (-.8,1);
          \draw (-1,-.25).. controls (-1,-.25) and (0,0).. node[above, at end]{$2$} (0,1);
          \draw (-1,-.5)-- (1,.25); 
          \draw (-1,-.25)-- (1,.5); 
          \draw[red] (-1,0)-- (1,.75); 
          \draw (-1,-.75).. controls (-1,-.75) and (.1,-.6).. node[above, at end]{$3$} (.16,1);
           \draw (-1,.5).. controls (-1,.5) and (0,.5).. (0,1); 
          }
   \end{array}=\begin{array}{c}  
 \tikz[very thick,xscale=2,yscale=2]{
            \draw (-1,-1)-- (1,-1);
            \draw (-1,1)-- (1,1);
            \draw[dashed] (1,1)-- (1,-1);
          \draw[dashed] (-1,-1)-- (-1,1);
           \draw[red] (-.8,-1).. controls (-.8,-1) and (-.8,-.44).. node[below, at start]{$r_L$} (1,0.11) ;
            \draw (-.48,-1) .. controls (-.48,-1) and (-.48,-0.56) ..node[below, at start]{$2$} (1,-0.11);
            \draw (-0.16,-1) .. controls (-.16,-1) and (-.16,-.67) ..node[below, at start]{$3$} (1,-.33);
           \draw[red] (.48,-1)--node[below, at start]{$r_R$} (.48,1);
            \draw (.64 ,-1).. controls (.64,-1) and (.64,-.78).. node[below, at start]{$2$} (1,-.56);
          \draw (.8 ,-1).. controls (.8,-1) and (.8,-.89).. node[below, at start]{$1$} (1,-.7);
          \draw (-1,0.35).. controls (-1,0.35) and (-.16,.35).. node[above, at end]{$1$} (-.16,1);
          \draw[red] (-1,.77).. controls (-1,.77) and (-.8,.77).. (-.8,1);
          \draw (-1,-.55).. controls (-1,-.55) and (-.25,-.55).. node[above, at end]{$2$} (0,1);
          \draw (-1,-.7)-- (1,.35); 
          \draw (-1,-.11)-- (1,.55); 
          \draw[red] (-1,.11)-- (1,.77); 
          \draw (-1,-.33).. controls (-1,-.33) and (-.35,0).. node[above, at end]{$3$} (.16,1);
         \draw (-1,.5).. controls (-1,.5) and (-.5,.5).. (0,1); 
          }
   \end{array}-\begin{array}{c}  
 \tikz[very thick,xscale=2,yscale=2]{
            \draw (-1,-1)-- (1,-1);
            \draw (-1,1)-- (1,1);
            \draw[dashed] (1,1)-- (1,-1);
          \draw[dashed] (-1,-1)-- (-1,1);
           \draw[red] (-.8,-1).. controls (-.8,-1) and (-.8,-.44).. node[below, at start]{$r_L$} (1,0.11) ;
            \draw (-.48,-1) .. controls (-.48,-1) and (-.48,-0.56) ..node[below, at start]{$2$} (1,-0.11);
            \draw (-0.16,-1) .. controls (-.16,-1) and (-.16,-.67) ..node[below, at start]{$3$} (1,-.33);
           \draw[red] (.48,-1)--node[below, at start]{$r_R$} (.48,1);
            \draw (.64 ,-1).. controls (.64,-1) and (.64,-.78).. node[below, at start]{$2$} (1,-.56);
          \draw (.8 ,-1).. controls (.8,-1) and (.8,-.89).. node[below, at start]{$1$} (1,-.7);
          \draw (-1,0.35).. controls (-1,0.35) and (-.16,.35).. node[above, at end]{$1$} (-.16,1);
          \draw[red] (-1,.77).. controls (-1,.77) and (-.8,.77).. (-.8,1);
          \draw (-1,-.11).. controls (-1,-.11) and (-.5,-.11).. node[above, at end]{$2$} (0,1);
          \draw (-1,-.7)-- (1,.35); 
          \draw (-1,-.55)-- (1,.55); 
          \draw[red] (-1,.11)-- (1,.77); 
          \draw (-1,-.33).. controls (-1,-.25) and (-.42,-.25).. node[above, at end]{$3$} (.16,1);
           \draw (-1,.5).. controls (-1,.5) and (-.5,.5).. (0,1); 
          }
   \end{array}\\
   =\coord_{23}\Zij{1}{24}{1}-\coord_{12}\Zij{2}{24}{1}.
\end{multline*}
The most important identity used in this calculation is relation \eqref{triple-dumb} with $ijk=232$. Similarly, we have
\begin{multline*}
    \coord_{24}\Zij{2}{13}{1}=\\
    \begin{array}{c}  
 \tikz[very thick,xscale=2,yscale=2]{
            \draw (-1,-1)-- (1,-1);
            \draw (-1,1)-- (1,1);
            \draw[dashed] (1,1)-- (1,-1);
          \draw[dashed] (-1,-1)-- (-1,1);
           \draw[red] (-.8,-1).. controls (-.8,-1) and (-.8,-.57).. node[below, at start]{$r_L$} (1,-.14);
            \draw (-.48,-1) .. controls (-.48,-1) and (0,-1) ..node[below, at start]{$2$}node[above, at end]{$2$} (0,1);
            \draw (-0.16,-1) .. controls (-.16,-1) and (0,-1) ..node[below, at start]{$3$}node[above, at end]{$3$} (.16,1);
           \draw[red] (.48,-1)--node[below, at start]{$r_R$} 
          (.48,1);
            \draw (.64 ,-1).. controls (.64,-1) and (.64,-.71).. node[below, at start]{$2$} (1,-.43);
          \draw (.8 ,-1).. controls (.8,-1) and (.8,-.86).. node[below, at start]{$1$} (1,-.71);
          \draw (-1,0.14).. controls (-1,0.14) and (-.16,.14).. node[above, at end]{$1$} (-.16,1);
          \draw[red] (-1,.71).. controls (-1,.71) and (-.8,.71).. (-.8,1);
          \draw (-1,.43).. controls (-1,.43) and (0,.43).. node[above, at end]{$2$} (0,1);
          \draw (-1,-.71)-- (1,.14); 
          \draw (-1, -.43)-- (1,.43); 
          \draw[red] (-1, -.14)-- (1,.71); 
          }
   \end{array}=\begin{array}{c}  
 \tikz[very thick,xscale=2,yscale=2]{
            \draw (-1,-1)-- (1,-1);
            \draw (-1,1)-- (1,1);
            \draw[dashed] (1,1)-- (1,-1);
          \draw[dashed] (-1,-1)-- (-1,1);
           \draw[red] (-.8,-1).. controls (-.8,-1) and (-.8,-.57).. node[below, at start]{$r_L$} (1,-.14);
            \draw (-.48,-1) .. controls (-.48,-1) and (0,-1) ..node[below, at start]{$2$}node[above, at end]{$2$} (0,1);
            \draw (-0.16,-1) .. controls (-.16,-1) and (0,-1) ..node[below, at start]{$3$}node[above, at end]{$3$} (.16,1);
           \draw[red] (.48,-1)--node[below, at start]{$r_R$} 
          (.48,1);
            \draw (.64 ,-1).. controls (.64,-1) and (.64,-.86).. node[below, at start]{$2$} (1,-.71);
          \draw (.8 ,-1).. controls (.8,-1) and (.8,-.71).. node[below, at start]{$1$} (1,-.43);
          \draw (-1,0.43).. controls (-1,0.43) and (-.16,.43).. node[above, at end]{$1$} (-.16,1);
          \draw[red] (-1,.71).. controls (-1,.71) and (-.8,.71).. (-.8,1);
          \draw (-1,.14).. controls (-1,.14) and (0,.14).. node[above, at end]{$2$} (0,1);
          \draw (-1,-.71)-- (1,.14); 
          \draw (-1, -.43)-- (1,.43); 
          \draw[red] (-1, -.14)-- (1,.71); 
          }
   \end{array}+\begin{array}{c}  
 \tikz[very thick,xscale=2,yscale=2]{
            \draw (-1,-1)-- (1,-1);
            \draw (-1,1)-- (1,1);
            \draw[dashed] (1,1)-- (1,-1);
          \draw[dashed] (-1,-1)-- (-1,1);
           \draw[red] (-.8,-1).. controls (-.8,-1) and (-.8,-.57).. node[below, at start]{$r_L$} (1,-.14);
            \draw (-1,-.43) .. controls (-1,-.43) and (0,-.43).. node[above, at end]{$2$} (0,1);
             \draw (-.48,-1) .. controls (-.48,-1) and (-.48,-.2) ..node[below, at start]{$2$} (1,.14);
            \draw (-0.16,-1) .. controls (-.16,-1) and (0,-1) ..node[below, at start]{$3$}node[above, at end]{$3$} (.16,1);
           \draw[red] (.48,-1)--node[below, at start]{$r_R$} 
          (.48,1);
            \draw (.64 ,-1).. controls (.64,-1) and (.64,-.71).. node[below, at start]{$2$} (1,-.43);
          \draw (.8 ,-1).. controls (.8,-1) and (.8,-.86).. node[below, at start]{$1$} (1,-.71);
          \draw (-1,0.43).. controls (-1,0.43) and (-.16,.43).. node[above, at end]{$1$} (-.16,1);
          \draw[red] (-1,.71).. controls (-1,.71) and (-.8,.71).. (-.8,1);
          \draw (-1,.14).. controls (-1,.14) and (0,.14).. node[above, at end]{$2$} (0,1);
          \draw (-1, -.71)-- (1,.43); 
          \draw[red] (-1, -.14)-- (1,.71); 
          }
   \end{array}\\
   =\coord_{34}\Zij{1}{24}{1}+\coord_{14}\Zij{2}{24}{1}.
\end{multline*}
where the most notable step was applying relation \eqref{triple-dumb} with $ijk=212$. Together, these two equations produce the transition matrix:
\begin{align*}
    &&\gamma_{13}^{24}[\textcolor{red}{2}23\textcolor{red}{2}21]&=\left(\begin{array}{cc}
        \coord_{23}/\coord_{13} & -\coord_{12}/\coord_{13}  \\
        \coord_{34}/\coord_{13} & \coord_{14}/\coord_{13} 
    \end{array}\right).&
\end{align*}
Lemma \ref{transition-lemma-2} then tells us that
\begin{lemma}\label{lem:333}
$\tilting^{\color{red}2\color{black}23\color{red}2\color{black}21}\cong\taut^\perp.$
\end{lemma}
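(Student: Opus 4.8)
The plan is to identify $\tilting^{\Bi}$ for $\Bi=\textcolor{red}{2}23\textcolor{red}{2}21$ directly from its transition matrix, exactly in the style of the proofs of \Cref{lem:331,lem:332}. The preliminary work is already in place: \Cref{algorithm13} and its $U_{24}$-variant produce the explicit free bases $\{\Zij{1}{13}{k},\Zij{2}{13}{k}\}$ of $M^{\Bi}_{13}$ and $\{\Zij{1}{24}{k},\Zij{2}{24}{k}\}$ of $M^{\Bi}_{24}$ for $k\gg0$, and the two KLRW-relation computations displayed above give
\[
\coord_{24}\Zij{1}{13}{1}=\coord_{23}\Zij{1}{24}{1}-\coord_{12}\Zij{2}{24}{1},\qquad
\coord_{24}\Zij{2}{13}{1}=\coord_{34}\Zij{1}{24}{1}+\coord_{14}\Zij{2}{24}{1}.
\]
Rewriting in the winding-degree-zero generators (using \Cref{lem:k-same}) turns these into the transition matrix
\[
\gamma^{24}_{13}[\Bi]\;=\;\frac{1}{\coord_{13}}\begin{pmatrix}\coord_{23}&-\coord_{12}\\ \coord_{34}&\coord_{14}\end{pmatrix}.
\]

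Once this matrix is known, the identification is a comparison with \Cref{transition-lemma-2}(3). Under the identification $\coord_{ij}=\wcoor_{ij}$ established earlier (the $D_{ij}$ satisfy the same Plücker relation as $e_i\wedge e_j$), one computes $\det\gamma^{24}_{13}[\Bi]=\coord_{24}/\coord_{13}$ from \eqref{plucker}, which already matches $\det\taut^{\perp}\cong\mathcal{L}^{-1}$; and the inverse $\gamma^{13}_{24}[\Bi]=\coord_{24}^{-1}\left(\begin{smallmatrix}\coord_{14}&\coord_{12}\\ -\coord_{34}&\coord_{23}\end{smallmatrix}\right)$ coincides with the $\taut^{\perp}$-transition matrix $\trans^{13}_{24}$ of \Cref{transition-lemma-2} after post-composing on $U_{13}'$ with the constant frame change $\left(\begin{smallmatrix}0&1\\1&0\end{smallmatrix}\right)$ and pre-composing on $U_{24}'$ with $\left(\begin{smallmatrix}0&1\\-1&0\end{smallmatrix}\right)$. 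These two matrices are constant and invertible, hence regular on every coordinate patch, so they assemble into an honest isomorphism of the two cocycle descriptions over $U_{13}'\cup U_{24}'$. Since the complement of $U_{13}'\cup U_{24}'$ in $T^*X$ has codimension $\geq2$ (as recorded in \Cref{sec:TGr}), this isomorphism of vector bundles extends uniquely over all of $T^*X$, giving $\tilting^{\Bi}\cong\taut^{\perp}$.

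I expect the main obstacle to be the diagrammatic bookkeeping in the two relation computations that produce $\gamma^{24}_{13}[\Bi]$: one must correctly track which black $2$-strand is tied to which red strand in each $\Zij{s}{ij}{k}$, apply \eqref{triple-dumb} with the right triple ($ijk=232$ in the first identity, $ijk=212$ in the second), and clear the resulting black–black and red–black bigons via \eqref{black-bigon} and \eqref{cost}, all while keeping signs consistent; one also needs the algorithm-plus-Nakayama argument to certify that the chosen $\Zij{s}{ij}{k}$ really are free bases. The comparison step itself is only a finite check, but it genuinely requires allowing a permutation-and-sign change of local frame on each patch rather than literal equality of matrices — the sign mismatch already flagged in the conventions for the sections $A_i$.
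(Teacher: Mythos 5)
Your proposal is correct and follows essentially the same route as the paper: the same generators from \Cref{algorithm13}, the same two KLRW-relation identities (via \eqref{triple-dumb} with $ijk=232$ and $212$ plus the bigon relations), the same transition matrix $\gamma^{24}_{13}[\Bi]$, and then identification with $\taut^{\perp}$ by comparison with \Cref{transition-lemma-2}(3), using the codimension-$\geq 2$ extension principle already set up in \Cref{sec:TGr}. Your explicit constant frame-change matrices produce the $\taut^{\perp}$ cocycle only up to an overall sign, but that is itself a harmless constant change of frame, and your spelled-out comparison is if anything more careful than the paper's one-line invocation of \Cref{transition-lemma-2}.
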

\subsubsection{Sheaf with idempotent \textcolor{red}{2}21\textcolor{red}{2}23}
Applying the algorithms to the case where  $\Bi$=\textcolor{red}{2}21\textcolor{red}{2}23 produces the following set of generators:
\begin{align*}
&&\Zij{1}{13}{1}&=\begin{array}{c}  
 \tikz[very thick,xscale=2,yscale=2]{
            \draw (-1,-1)-- (1,-1);
            \draw (-1,1)-- (1,1);
          \draw[dashed] (-1,-1)-- (-1,1);
           \draw[red] (-.8,-1) .. controls (-.8,-1) and (-.8,-.625) .. node[below, at start]{$r_L$} (1,0.25) ;
            \draw (-.48,-1) .. controls (-.48,-1) and (-.48,-.5) ..node[below, at start]{$2$} (1,0);
            \draw (-0.16,-1) .. controls (-.16,-1) and (-.16,-.625) ..node[below, at start]{$1$} (1,-.25);
           \draw[red] (.48,-1)--node[below, at start]{$r_R$} 
          (.48,1);
            \draw (.64 ,-1).. controls (.64,-1) and (.64,-.75).. node[below, at start]{$2$} (1,-.5);
          \draw (.8 ,-1).. controls (.8,-1) and (.8,-.875).. node[below, at start]{$3$}(1,-.75);
          \draw[dashed] (1,1)-- (1,-1);
          \draw (-1,-0.75).. controls (-1,-.75) and (-.5,-.75).. node[above, at end]{$3$} (.16,1);
          \draw (-1,-.5).. controls (-1,-.5) and (-.5,-.5).. node[above, at end]{$2$} (0,1);
          \draw[red] (-1,.25).. controls (-1,.25) and (-.9,.25).. (-.8,1);
          \draw (-1,.625).. controls (-1,.625) and (-.58,.625).. node[above, at end]{$1$} (-.16,1);
          \draw (1,.625)-- (-1,-.25);
          \draw (-1,0).. controls (-1,0) and (-.5,0).. (0,1);
          }
   \end{array}&
   &&\Zij{2}{13}{1}&=\begin{array}{c}  
 \tikz[very thick,xscale=2,yscale=2]{
            \draw (-1,-1)-- (1,-1);
            \draw (-1,1)-- (1,1);
          \draw[dashed] (-1,-1)-- (-1,1);
           \draw[red] (-.8,-1) .. controls (-.8,-1) and (-.8,-.2) ..node[below, at start]{$r_L$} (1,.6);
            \draw (-.48,-1) .. controls (-.48,-1) and (0,0) ..node[below, at start]{$2$} (0,1);
            \draw (-0.16,-1) .. controls (-.16,-1) and (-.16,-0.4) ..node[below, at start]{$1$} 
          (1,0.2);
          \draw[red] (.48 ,-1)--node[below, at start]{$r_R$} (.48,1);
           \draw (.64,-1) .. controls (.64,-1) and (.64,-.6) ..node[below, at start]{$2$} (1,-.2);
           \draw (.8,-1).. controls (.8,-1) and (.8,-.8) .. node[below, at start]{$3$} (1,-.6);
          \draw[red] (-1,0.6) .. controls (-1,0.6) and (-0.8,0.8) .. (-0.8,1);
          \draw (-1,0.2) .. controls (-1,0.2) and (-.16,0.6) ..node[above, at end]{$1$} (-.16,1);
          \draw (-1,-.2) .. controls (-1,-.2) and (0,0.4) ..node[above, at end]{$2$} (0,1);
          \draw (-1,-.6) .. controls (-1,-.6) and (0.16,.2) ..node[above, at end]{$3$} (0.16,1);
          \draw[dashed] (1,1)-- (1,-1);
          }
   \end{array}&
\end{align*}

\begin{align*}
&&\Zij{1}{24}{1}&=\begin{array}{c}  
 \tikz[very thick,xscale=2,yscale=2]{
            \draw (-1,-1)-- (1,-1);
            \draw (-1,1)-- (1,1);
          \draw[dashed] (-1,-1)-- (-1,1);
           \draw[red] (-.8,-1).. controls (-.8,-1) and (-.8,-.285).. node[below, at start]{$r_L$} (1,.43);
            \draw (-.48,-1) .. controls (-.48,-1) and (-.48,-.55) ..node[below, at start]{$2$} (1,-.14);
            \draw (-0.16,-1) .. controls (-.16,-1) and (-.16,-.43) ..node[below, at start]{$1$} (1,.14);
           \draw[red] (.48,-1)--node[below, at start]{$r_R$} 
          (.48,1);
            \draw (.64 ,-1).. controls (.64,-1) and (.64,-.715).. node[below, at start]{$2$} (1,-.43);
          \draw (.8 ,-1).. controls (.8,-1) and (.8,-.855).. node[below, at start]{$3$} (1,-.71);
          \draw (-1,.14).. controls (-1,.14) and (-.58,.14)..node[above, at end]{$1$} (-.16,1);
          \draw[dashed] (1,1)-- (1,-1);
          \draw (-1,-.71).. controls (-1,-.71) and (-.42,-.71).. node[above,at end]{$3$}(.16,1);
          \draw (-1,-.43).. controls (-1,-.43) and (-.5,-.43).. node[above,at end]{$2$} (0,1);
          \draw (-1,-.14).. controls (-1,-.14) and (-.5,-.14).. (0,1);
          \draw[red] (-1,.43).. controls (-1,.43) and (-.9,.43).. (-.8,1);
          }
   \end{array}&
   &&\Zij{2}{24}{1}&=\begin{array}{c}  
 \tikz[very thick,xscale=2,yscale=2]{
            \draw (-1,-1)-- (1,-1);
            \draw (-1,1)-- (1,1);
          \draw[dashed] (-1,-1)-- (-1,1);
           \draw[red] (-.8,-1) .. controls (-.8,-1) and (-.8,-.25) ..node[below, at start]{$r_L$} (1,.5);
            \draw (-.48,-1) .. controls (-.48,-1) and (0,0) ..node[below, at start]{$2$} (0,1);
            \draw (-0.16,-1) .. controls (-.16,-1) and (-.16,-0.5) ..node[below, at start]{$1$} 
          (1,0);
          \draw[red] (.48 ,-1)--node[below, at start]{$r_R$} (.48,1);
           \draw (.64,-1) .. controls (.64,-1) and (.64,-.75) ..node[below, at start]{$2$} (1,-.5);
           \draw (.8,-1).. controls (.8,-1) and (.48,-1) .. node[below, at start]{$3$}node[above, at end]{$3$} (.16,1);
          \draw[red] (-1,0.5) .. controls (-1,0.5) and (-0.8,0.75) .. (-0.8,1);
          \draw (-1,0) .. controls (-1,0) and (-.16,0.5) ..node[above, at end]{$1$} (-.16,1);
          \draw (-1,-.5) .. controls (-1,-.5) and (0,0.25) ..node[above, at end]{$2$} (0,1);
          \draw[dashed] (1,1)-- (1,-1);
          }
   \end{array}&
\end{align*}
To compute the transition matrix $\gamma_{24}^{13}$[\textcolor{red}{2}21\textcolor{red}{2}23] we perform the following manipulations:
\begin{multline*}
    \coord_{24}\Zij{1}{13}{1}=\\
    \begin{array}{c}  
 \tikz[very thick,xscale=2,yscale=2]{
            \draw (-1,-1)-- (1,-1);
            \draw (-1,1)-- (1,1);
          \draw[dashed] (-1,-1)-- (-1,1);
           \draw[red] (-.8,-1) .. controls (-.8,-1) and (-.8,-.72) .. node[below, at start]{$r_L$} (1,0.11) ;
            \draw (-.48,-1) .. controls (-.48,-1) and (-.48,-.78) ..node[below, at start]{$2$} (1,-.11);
            \draw (-0.16,-1) .. controls (-.16,-1) and (-.16,-.83) ..node[below, at start]{$1$} (1,-.33);
           \draw[red] (.48,-1)--node[below, at start]{$r_R$} 
          (.48,1);
            \draw (.64 ,-1).. controls (.64,-1) and (.64,-.77).. node[below, at start]{$2$} (1,-.55);
          \draw (.8 ,-1).. controls (.8,-1) and (.8,-.88).. node[below, at start]{$3$}(1,-.78);
          \draw[dashed] (1,1)-- (1,-1);
          \draw (-1,-0.78).. controls (-1,-.78) and (-.5,-.78).. node[above, at end]{$3$} (.16,1);
          \draw (-1,-.56).. controls (-1,-.56) and (-.5,-.56).. node[above, at end]{$2$} (0,1);
          \draw[red] (-1,.78).. controls (-1,.78) and (-.9,.78).. (-.8,1);
          \draw (-1,.33).. controls (-1,.33) and (-.58,.33).. node[above, at end]{$1$} (-.16,1);
          \draw (-1,-.33)-- (1,.33);
          \draw (-1,-.11)-- (1,.56);
          \draw[red] (-1,0.11)-- (1,.78);
          \draw (-1,0.56).. controls (-1,0.56) and (-.5,0.56).. (0,1);
          }
   \end{array}
   =\begin{array}{c}  
 \tikz[very thick,xscale=2,yscale=2]{
            \draw (-1,-1)-- (1,-1);
            \draw (-1,1)-- (1,1);
          \draw[dashed] (-1,-1)-- (-1,1);
           \draw[red] (-.8,-1) .. controls (-.8,-1) and (-.8,-.72) .. node[below, at start]{$r_L$} (1,0.11) ;
            \draw (-.48,-1) .. controls (-.48,-1) and (-.48,-.78) ..node[below, at start]{$2$} (1,-.11);
            \draw (-0.16,-1) .. controls (-.16,-1) and (-.16,-.83) ..node[below, at start]{$1$} (1,-.33);
           \draw[red] (.48,-1)--node[below, at start]{$r_R$} 
          (.48,1);
            \draw (.64 ,-1).. controls (.64,-1) and (.64,-.77).. node[below, at start]{$2$} (1,-.55);
          \draw (.8 ,-1).. controls (.8,-1) and (.8,-.88).. node[below, at start]{$3$}(1,-.78);
          \draw[dashed] (1,1)-- (1,-1);
          \draw (-1,-0.78).. controls (-1,-.78) and (-.5,-.78).. node[above, at end]{$3$} (.16,1);
          \draw (-1,-.11).. controls (-1,-.11) and (-.5,-.11).. node[above, at end]{$2$} (0,1);
          \draw[red] (-1,.78).. controls (-1,.78) and (-.9,.78).. (-.8,1);
          \draw (-1,.33).. controls (-1,.33) and (-.58,.33).. node[above, at end]{$1$} (-.16,1);
          \draw (-1,-.33)-- (1,.56);
          \draw (-1,-.56)-- (1,.33);
          \draw[red] (-1,0.11)-- (1,.78);
          \draw (-1,0.56).. controls (-1,0.56) and (-.5,0.56).. (0,1);
          }
   \end{array}+\begin{array}{c}  
 \tikz[very thick,xscale=2,yscale=2]{
            \draw (-1,-1)-- (1,-1);
            \draw (-1,1)-- (1,1);
          \draw[dashed] (-1,-1)-- (-1,1);
           \draw[red] (-.8,-1) .. controls (-.8,-1) and (-.8,-.72) .. node[below, at start]{$r_L$} (1,0.11) ;
            \draw (-.48,-1) .. controls (-.48,-1) and (-.48,-.83) ..node[below, at start]{$2$} (1,-.33);
            \draw (-0.16,-1) .. controls (-.16,-1) and (-.16,-.78) ..node[below, at start]{$1$} (1,-.11);
           \draw[red] (.48,-1)--node[below, at start]{$r_R$} 
          (.48,1);
            \draw (.64 ,-1).. controls (.64,-1) and (.64,-.77).. node[below, at start]{$2$} (1,-.55);
          \draw (.8 ,-1).. controls (.8,-1) and (.8,-.88).. node[below, at start]{$3$}(1,-.78);
          \draw[dashed] (1,1)-- (1,-1);
          \draw (-1,-0.78).. controls (-1,-.78) and (-.5,-.78).. node[above, at end]{$3$} (.16,1);
          \draw (-1,-.56).. controls (-1,-.56) and (-.5,-.56).. node[above, at end]{$2$} (0,1);
          \draw[red] (-1,.78).. controls (-1,.78) and (-.9,.78).. (-.8,1);
          \draw (-1,.56).. controls (-1,.56) and (-.58,.56).. node[above, at end]{$1$} (-.16,1);
          \draw (-1,-.33)-- (1,.33);
          \draw (-1,-.11)-- (1,.56);
          \draw[red] (-1,0.11)-- (1,.78);
          \draw (-1,0.33).. controls (-1,0.33) and (-.5,0.33).. (0,1);
          }
   \end{array}\\
   =\coord_{14}\Zij{1}{24}{1}+\coord_{12}\Zij{2}{24}{1}.
\end{multline*}
The most notable relation used here was applying \eqref{triple-dumb} with $ijk=212$. Similarly, we have
\begin{multline*}
    \coord_{24}\Zij{2}{13}{1}=\\
    \begin{array}{c}  
 \tikz[very thick,xscale=2,yscale=2]{
            \draw (-1,-1)-- (1,-1);
            \draw (-1,1)-- (1,1);
          \draw[dashed] (-1,-1)-- (-1,1);
           \draw[red] (-.8,-1) .. controls (-.8,-1) and (-.8,-.42) ..node[below, at start]{$r_L$} (1,.14);
            \draw (-.48,-1) .. controls (-.48,-1) and (0,0) ..node[below, at start]{$2$} (0,1);
            \draw (-0.16,-1) .. controls (-.16,-1) and (-.16,-0.57) ..node[below, at start]{$1$} (1,-.14);
          \draw[red] (.48 ,-1)--node[below, at start]{$r_R$} (.48,1);
           \draw (.64,-1) .. controls (.64,-1) and (.64,-.71) ..node[below, at start]{$2$} (1,-.43);
           \draw (.8,-1).. controls (.8,-1) and (.8,-.855) .. node[below, at start]{$3$} (1,-.71);
          \draw[red] (-1,0.6) .. controls (-1,0.6) and (-0.8,0.8) .. (-0.8,1);
          \draw (-1,-.14) .. controls (-1,-.14) and (-.16,0.6) ..node[above, at end]{$1$} (-.16,1);
          \draw (-1,.43) .. controls (-1,.43) and (0,0.4) ..node[above, at end]{$2$} (0,1);
          \draw (-1,-.71) .. controls (-1,-.71) and (0.5,-.25) ..node[above, at end]{$3$} (0.16,1);
          \draw[dashed] (1,1)-- (1,-1);
          \draw (-1,-.43)-- (1,.43); 
          \draw[red] (-1, .14)-- (1,.71);
          }
   \end{array}
   =\begin{array}{c}  
 \tikz[very thick,xscale=2,yscale=2]{
            \draw (-1,-1)-- (1,-1);
            \draw (-1,1)-- (1,1);
          \draw[dashed] (-1,-1)-- (-1,1);
           \draw[red] (-.8,-1) .. controls (-.8,-1) and (-.8,-.42) ..node[below, at start]{$r_L$} (1,.14);
            \draw (-.48,-1) .. controls (-.48,-1) and (0,0) ..node[below, at start]{$2$} (0,1);
            \draw (-0.16,-1) .. controls (-.16,-1) and (-.16,-0.57) ..node[below, at start]{$1$} (1,-.14);
          \draw[red] (.48 ,-1)--node[below, at start]{$r_R$} (.48,1);
           \draw (.64,-1) .. controls (.64,-1) and (.64,-.71) ..node[below, at start]{$2$} (1,-.43);
           \draw (.8,-1).. controls (.8,-1) and (.8,-.855) .. node[below, at start]{$3$} (1,-.71);
          \draw[red] (-1,0.6) .. controls (-1,0.6) and (-0.8,0.8) .. (-0.8,1);
          \draw (-1,-.14) .. controls (-1,-.14) and (-.16,0.6) ..node[above, at end]{$1$} (-.16,1);
          \draw (-1,.43) .. controls (-1,.43) and (0,0.4) ..node[above, at end]{$2$} (0,1);
          \draw (-1,-.71) .. controls (-1,-.71) and (0.2,0) ..node[above, at end]{$3$} (0.16,1);
          \draw[dashed] (1,1)-- (1,-1);
          \draw (-1,-.43)-- (1,.43); 
          \draw[red] (-1, .14)-- (1,.71);
          }
   \end{array}-\begin{array}{c}  
 \tikz[very thick,xscale=2,yscale=2]{
            \draw (-1,-1)-- (1,-1);
            \draw (-1,1)-- (1,1);
          \draw[dashed] (-1,-1)-- (-1,1);
           \draw[red] (-.8,-1) .. controls (-.8,-1) and (-.8,-.42) ..node[below, at start]{$r_L$} (1,.14);
            \draw (-.48,-1) .. controls (-.48,-1) and (0,0) ..node[below, at start]{$2$} (0,1);
            \draw (-0.16,-1) .. controls (-.16,-1) and (-.16,-0.57) ..node[below, at start]{$1$} (1,-.14);
          \draw[red] (.48 ,-1)--node[below, at start]{$r_R$} (.48,1);
           \draw (.64,-1) .. controls (.64,-1) and (.64,-.855) ..node[below, at start]{$2$} (1,-.71);
           \draw (.8,-1).. controls (.8,-1) and (.8,-.71) .. node[below, at start]{$3$} (1,-.43);
          \draw[red] (-1,0.6) .. controls (-1,0.6) and (-0.8,0.8) .. (-0.8,1);
          \draw (-1,-.14) .. controls (-1,-.14) and (-.16,0.6) ..node[above, at end]{$1$} (-.16,1);
          \draw (-1,.43) .. controls (-1,.43) and (0,0.4) ..node[above, at end]{$2$} (0,1);
          \draw (-1,-.43) .. controls (-1,-.43) and (-.5,-.5) ..node[above, at end]{$3$} (0.16,1);
          \draw[dashed] (1,1)-- (1,-1);
          \draw (-1,-.71)-- (1,.43); 
          \draw[red] (-1, .14)-- (1,.71);
          }
   \end{array}\\
   =-\coord_{34}\Zij{1}{24}{1}+\coord_{23}\Zij{2}{24}{1}.
\end{multline*}
The most notable relation used here was \eqref{triple-dumb} with $ijk=232$.
These equations then give us the following transition matrix:
\begin{align*}
    &&\gamma_{13}^{24}[\textcolor{red}{2}21\textcolor{red}{2}23]&=\left(\begin{array}{cc}
        \coord_{14}/\coord_{13} & \coord_{12}/\coord_{13} \\
         -\coord_{34}/\coord_{13} & \coord_{23}/\coord_{13} 
    \end{array}\right).&
\end{align*}
Lemma \ref{transition-lemma-2} then tells us that:
\begin{lemma}\label{lem:334}
    $\tilting^{\textcolor{red}{2}21\textcolor{red}{2}23}\cong\taut^\perp$.
\end{lemma}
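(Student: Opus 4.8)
The strategy mirrors the proofs of \Cref{lem:331,lem:332,lem:333}: having computed the transition matrix $\gamma_{13}^{24}[\textcolor{red}{2}21\textcolor{red}{2}23]$ above, the plan is to match it against the transition functions of the geometrically defined bundles recorded in \Cref{transition-lemma-2}. By the free-basis theorem for $M^{\Bi}_{13}$ and the analogous statement for $M^{\Bi}_{24}$, the sheaf $\tilting^{\textcolor{red}{2}21\textcolor{red}{2}23}$ is the rank-$2$ bundle on $T^*X$ obtained by gluing the trivial bundles on $\pi^{-1}(U_{13})$ and $\pi^{-1}(U_{24})$ via $\gamma_{13}^{24}[\textcolor{red}{2}21\textcolor{red}{2}23]$; everything outside $\pi^{-1}(U_{13}\cup U_{24})$ is irrelevant, since that locus has codimension $\geq 2$, so a vector bundle on $T^*X$ is determined by its restriction to this open set together with the gluing datum.

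First I would invoke the identification $\coord_{ij}=\wcoor_{ij}$ from \Cref{sec:TGr} and the subsequent lemmas (the chosen isomorphism $\tilde{\fM}\cong T^*X$ sends $\coord_{ij}$ to the Plücker section $\wcoor_{ij}=e_i\wedge e_j$, and the patches $\{\coord_{ij}\neq 0\}$ are the standard affine patches $U_{ij}$). Under this identification $\gamma_{13}^{24}[\textcolor{red}{2}21\textcolor{red}{2}23]$ becomes $\left(\begin{smallmatrix}\wcoor_{14}/\wcoor_{13} & \wcoor_{12}/\wcoor_{13}\\ -\wcoor_{34}/\wcoor_{13} & \wcoor_{23}/\wcoor_{13}\end{smallmatrix}\right)$, which coincides with the transition function $\trans^{24}_{13}$ of $\taut^{\perp}$ in \Cref{transition-lemma-2}(3) except for the signs of the two entries in the second row. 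That sign discrepancy is absorbed by a constant change of the trivializing frame on one of the two patches (negating one basis vector of the local frame, i.e.\ conjugating the cocycle by $\operatorname{diag}(1,-1)$ on $\pi^{-1}(U_{24})$), so the two cocycles are equivalent and $\tilting^{\textcolor{red}{2}21\textcolor{red}{2}23}$ is isomorphic to $\taut^{\perp}$ over $\pi^{-1}(U_{13}\cup U_{24})$. Since the complement of $U_{13}\cup U_{24}$ in $X$ has codimension $\geq 2$, this isomorphism extends uniquely to all of $T^*X$, exactly as used repeatedly above, giving $\tilting^{\textcolor{red}{2}21\textcolor{red}{2}23}\cong\taut^{\perp}$.

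The only laborious step is the one already carried out, namely the computation of $\gamma_{13}^{24}[\textcolor{red}{2}21\textcolor{red}{2}23]$: one expands $\coord_{24}\Zij{s}{13}{1}$ in the basis $\{\Zij{t}{24}{1}\}$ using the KLRW relations — chiefly the triple-point relation \eqref{triple-dumb} with $ijk=212$ (for $s=1$) and with $ijk=232$ (for $s=2$), together with the bigon relations \eqref{black-bigon} and \eqref{cost} — and the delicate point is that, unlike the words in \eqref{eq:rank-2-words} yielding $\mathcal{H}$ and $\mathcal{H}\otimes\mathcal{L}$, no dot ($\bullet_i$) terms survive in the final answer, so the bundle is $\taut^{\perp}$ rather than a nonsplit extension of line bundles. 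One could alternatively shortcut the matching by observing that $\textcolor{red}{2}21\textcolor{red}{2}23$ is carried to $\textcolor{red}{2}23\textcolor{red}{2}21$ by the order-two diagram symmetry combining the involution $1\leftrightarrow 3$ of $\Gamma=A_3$ with the reflection of the cylinder, which should identify the corresponding tilting summands and hence reduce the claim to \Cref{lem:333}; but I would write the argument via the explicit transition matrix, in the style of the preceding subsections.
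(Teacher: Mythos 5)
Your proposal is correct and takes essentially the same route as the paper: the paper's own proof of this lemma consists exactly of the transition-matrix computation quoted above followed by the comparison with \Cref{transition-lemma-2}(3), with the identification $\coord_{ij}=\wcoor_{ij}$, the harmless sign adjustment, and the codimension-$\geq 2$ extension over $\pi^{-1}(U_{13}\cup U_{24})$ all left implicit, which you simply spell out. One terminological nit: the second-row sign flip is absorbed by a \emph{one-sided} constant change of frame (negating one basis vector over a single patch), not by conjugation by $\operatorname{diag}(1,-1)$, which would negate the off-diagonal entries instead—but since that one-sided frame change is what you describe in words, the argument is unaffected.
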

\subsection{Summary}
Combining Lemmata \ref{lem:line-isos}, \ref{lem:331}, \ref{lem:332}, \ref{lem:333}, and \ref{lem:334}, we summarize the results of the previous two sections in the following table:
\begin{center}
\begin{tabular}{ |c|c| } 
 \hline
 Idempotent  & Vector bundle \\ 
 \hline
\textcolor{red}{2}$12^{(2)}3$\textcolor{red}{2} & $\mathcal{O}$ \\ 
\textcolor{red}{2}\textcolor{red}{2}$12^{(2)}3$&  $\mathcal{L}^{-1}$ \\
\textcolor{red}{2}2132\textcolor{red}{2} & $\mathcal{H}\otimes\mathcal{L}$\\
\textcolor{red}{22}2312 & $\mathcal{H}$\\
\textcolor{red}{2}23\textcolor{red}{2}21 & $\taut^\perp$ \\
\textcolor{red}{2}21\textcolor{red}{2}23 & $\taut$ \\
 \hline
\end{tabular}
\end{center}
With these identifications established, we are now in a position to conclude our main theorem:
\begin{theorem}
The vector bundle
\begin{align*}
    \tilting&=\tilting^{\textcolor{red}{2}12^{(2)}3\textcolor{red}{2}}\oplus \tilting^{\textcolor{red}{2}\textcolor{red}{2}12^{(2)}3}\oplus \tilting^{\textcolor{red}{2}2132\textcolor{red}{2}}\oplus \tilting^{\textcolor{red}{22}2312 }\oplus \tilting^{\textcolor{red}{2}23\textcolor{red}{2}21}\oplus \tilting^{\textcolor{red}{2}21\textcolor{red}{2}23 }&\\
    &\cong \mathcal{O}\oplus\mathcal{L}^{-1}\oplus\mathcal{H}\oplus\mathcal{H}\otimes\mathcal{L}\oplus\taut\oplus\taut^\perp&
\end{align*}
is isomorphic to the tilting generator of \Cref{th:Z-tilting} on $D^b(\mathsf{Coh}(T^*X))$.
\end{theorem}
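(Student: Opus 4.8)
The plan is to assemble results already in hand, since by the time we reach this statement every ingredient has been proven. By \Cref{th:Z-tilting}, the vector bundle $\Loc{e\mathbf{R}}$ is a tilting generator on $T^*X$ that agrees with Kaledin's construction, so it suffices to prove that the bundle $\tilting$ of the statement is equiconstituted with $\Loc{e\mathbf{R}}$: by the third of the natural operations recalled in the introduction, an equiconstituted bundle is again a tilting generator, and then $\End(\tilting)^{\operatorname{op}}$ and $\End(\Loc{e\mathbf{R}})^{\operatorname{op}}$ are Morita equivalent, so the functor $\mathbb{R}\Hom(\tilting,-)$ realizes the same equivalence on $D^b(\mathsf{Coh}(T^*X))$.

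First I would write $1\in\mathbf{R}$ as the sum of the idempotents $e(\Bi)$ over all words $\Bi$ with one black $1$, two black $2$'s, one black $3$ and two red $\textcolor{red}{2}$'s, so that $\Loc{e\mathbf{R}}\cong\bigoplus_{\Bi}\tilting^{\Bi}$. Then I would apply \Cref{wordlemma}: part (1) lets me freely permute pairs of letters other than the two black $2$'s without changing $\tilting^{\Bi}$, while parts (2)--(3) rewrite any $\tilting^{\Bi}$ whose two black $2$'s are adjacent or separated by a single letter of $\{1,3,\textcolor{red}{2}\}$ as a direct sum of bundles attached to words in which the two black $2$'s are either merged into a $2^{(2)}$ or separated on both sides of the cylinder by two letters of $\{1,3,\textcolor{red}{2}\}$. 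Iterating, and then doing the small combinatorial count (up to the reorderings of part (1), including the cyclic move of the last letter to the front, there are exactly the two words \eqref{eq:line-words} of the first kind and the four words \eqref{eq:rank-2-words} of the second), every $\tilting^{\Bi}$ decomposes into summands $\tilting^{\Bj}$ for one of these six $\Bj$; each of the six occurs, and by the indecomposability lemma preceding \Cref{sec:trivial} each $\tilting^{\Bj}$ is indecomposable. Hence $\Loc{e\mathbf{R}}$ is equiconstituted with $\tilting^{\textcolor{red}{2}12^{(2)}3\textcolor{red}{2}}\oplus\tilting^{\textcolor{red}{2}\textcolor{red}{2}12^{(2)}3}\oplus\tilting^{\textcolor{red}{2}2132\textcolor{red}{2}}\oplus\tilting^{\textcolor{red}{22}2312}\oplus\tilting^{\textcolor{red}{2}23\textcolor{red}{2}21}\oplus\tilting^{\textcolor{red}{2}21\textcolor{red}{2}23}$.

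Finally I would substitute the identifications already proven: \Cref{lem:line-isos} gives $\mathcal{O}$ and $\mathcal{L}^{-1}$; \Cref{lem:331} and \Cref{lem:332} give $\mathcal{H}\otimes\mathcal{L}$ and $\mathcal{H}$; and \Cref{lem:333}, \Cref{lem:334} give $\taut^{\perp}$ and $\taut$. The six-fold sum is then isomorphic to $\mathcal{O}\oplus\mathcal{L}^{-1}\oplus\mathcal{H}\oplus\mathcal{H}\otimes\mathcal{L}\oplus\taut\oplus\taut^{\perp}=\tilting$, so $\tilting$ is equiconstituted with $\Loc{e\mathbf{R}}$ and hence a tilting generator inducing the same equivalence as Kaledin's.

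Within this final theorem the assembly is essentially bookkeeping; the real obstacles are absorbed into the inputs. The delicate points are (a) the reduction \Cref{wordlemma} together with the enumeration ensuring that precisely the six words \eqref{eq:line-words}--\eqref{eq:rank-2-words} survive up to the permitted reorderings, so that no indecomposable summand is overlooked, and (b) the transition-matrix computations behind \Cref{lem:331}--\Cref{lem:334}, which require checking that the generators produced by \Cref{algorithm13} and its $U_{24}$ variant are genuine free bases of the localized modules $M^{\Bi}_{13}$ and $M^{\Bi}_{24}$ and that the resulting cocycles match those of \Cref{transition-lemma-2} and the canonical extension of \Cref{indecomp-ext}. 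If one had to prove the theorem from scratch, essentially all of the work would sit there.
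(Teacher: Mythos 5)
Your proposal is correct and follows essentially the same route as the paper, whose own proof of this theorem is exactly the assembly you describe: \Cref{wordlemma} reduces $\Loc{e\mathbf{R}}$ up to equiconstitution to the six words of \eqref{eq:line-words}--\eqref{eq:rank-2-words}, and \Cref{lem:line-isos}, \Cref{lem:331}, \Cref{lem:332}, \Cref{lem:333}, and \Cref{lem:334} identify the corresponding summands. Your explicit observation that the conclusion is equiconstitution with $\Loc{e\mathbf{R}}$ (invoking operation (iii) from the introduction, since multiplicities differ) is, if anything, a slightly more careful reading of ``isomorphic to the tilting generator'' than the paper's own phrasing.
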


\section{Comparison to other constructions}

\subsection{Comparison to Tseu}
\label{tseu}

In \cite[Th. 1]{tseuCanonicalTilting2024}, Tseu constructs a tilting generator on in the case where $k=2$ and $n$ is arbitrary.  In the case $n=4$, this is shown to coincide with our example up to tensor product with a line bundle in \cite[Ex. 1]{tseuCanonicalTilting2024}.  
To match notation between our papers:
\newcommand{\cT}{\mathcal{T}}
\begin{align}
	\cL^{-1}\otimes \mathcal{E}_{(-1,-1)}& =\mathcal{O} =\tilting^{\textcolor{red}{2}12^{(2)}3\textcolor{red}{2}} & \cL^{-1}\otimes  \mathcal{E}_{1} &= \mathcal{H}\otimes \cL=\tilting^{\textcolor{red}{22}2312 }\\
	\cL^{-1}\otimes 	\mathcal{E}_{(0,-1)}& = \cT =\tilting^{\textcolor{red}{2}23\textcolor{red}{2}21}& \cL^{-1}\otimes \mathcal{E}_{0} &= \mathcal{T}^{\perp}=\tilting^{\textcolor{red}{2}21\textcolor{red}{2}23 }\\
	\cL^{-1}\otimes 	\mathcal{E}_{(0,0)} & = \cL^{-1}=\tilting^{\textcolor{red}{2}\textcolor{red}{2}12^{(2)}3} & \cL^{-1}\otimes \mathcal{E}_{-1}& = \mathcal{H}=\tilting^{\textcolor{red}{2}2132\textcolor{red}{2}}
\end{align}

It seems quite likely that our construction agrees with Tseu's for all values of $n$, leading us to conjecture:
\begin{conjecture}
	For all $n$, the tilting generator on $T^*\Gr(2,n)$ constructed in \cite[Th. 1]{tseuCanonicalTilting2024} is equiconstituted to $\tilting$ up to tensor product with a line bundle.  
\end{conjecture}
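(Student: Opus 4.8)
The plan is to prove the conjecture above by explicitly computing the KLRW tilting generator $\tilting$ on $T^*\Gr(2,n)$ for general $n$, in the same spirit as the $n=4$ computation of this paper, and then matching its indecomposable summands against the bundles $\mathcal{E}_\bullet$ of \cite[Th.~1]{tseuCanonicalTilting2024} after a single global twist by $\cL$, as in \Cref{tseu}; I would work explicitly rather than try to deduce equiconstitution from a comparison of endomorphism algebras, which in general cannot detect it (non-equiconstituted tilting bundles can share an endomorphism algebra). For $k=2$ and arbitrary $n$ the relevant data is $\Bv=(1,2,\dots,2,1)$ and $\Bw=\mathbf{e}_2+\mathbf{e}_{n-2}$, so the doubled black strands now sit at every vertex $2,\dots,n-2$. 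The first step is to establish the higher-rank analogue of \Cref{wordlemma}: using the local relations \crefrange{first-QH}{red-triple}, show that every summand $\tilting^{\Bi}$ is equiconstituted to one indexed by a word in ``reduced form'', in which at each doubled vertex $m$ the two black $m$-strands are either fused into a thick strand $m^{(2)}$ or separated on both arcs of the circle by a strand with label in $\{m-1,m+1\}$ or red. Enumerating these reduced words and sorting them by the number of thick strands they contain---which governs $\rk\tilting^{\Bi}$ by \Cref{lem:tilting-rank}---should produce a list of candidate indecomposables whose cardinality is expected to be $\rk K(T^*\Gr(2,n))=\binom n2$.

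The second step is to identify each reduced $\tilting^{\Bi}$ as an honest bundle on $T^*\Gr(2,n)$. Here the tool is the generalization of \Cref{algorithm13}: produce a free generating set for the localization $M^{\Bi}_{ij}$ on the patch where the Pl\"ucker coordinate $\coord_{ij}\neq 0$, using the Pl\"ucker cover of \Cref{sec:TGr} extended to all of $\Gr(2,n)$, and then, exactly as in \Cref{sec:non-trivial}, apply the relations---chiefly the black-strand triple-point relation \eqref{triple-dumb} and the bigon relations \eqref{black-bigon} and \eqref{cost}---to compute the transition matrices $\trans^{ij}_{rs}[\Bi]$. Comparing these with the transition functions of line bundles, of $\taut$ and $\taut^{\perp}$ and their wedge and symmetric powers, and of the canonical self-extensions of such bundles (the $n$-fold analogue of $\exten$ from \Cref{indecomp-ext}), along the lines of \Cref{transition-lemma-2}, then pins down the geometric bundle. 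The degenerate case $\Gr(2,3)\cong\mathbb{P}^2$, where $\tilting$ is the pullback of Beilinson's generator by \Cref{sec:Pn}, together with the $n=4$ case computed here, would serve as the base of an induction on $n$; one expects a recursion obtained by adding a node to the $A_{n-1}$ quiver, mirrored geometrically by the tower $\Gr(2,n-1)\subset\Gr(2,n)$ (or by a projective-bundle/cell recursion), to organize the bookkeeping.

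With $\tilting$ in hand, the comparison with Tseu becomes a dictionary: translate \cite[Th.~1]{tseuCanonicalTilting2024} into the geometric bundles above, tensor by $\cL^{-1}$ as in \Cref{tseu}, and check that the two lists of indecomposable summands coincide as sets after one global twist. Since ``equiconstituted'' ignores multiplicities, only the sets of indecomposables need match. A useful cross-check throughout is to compare classes in $K(T^*\Gr(2,n))$ and the vanishing of $\Ext^{>0}$ among summands: both are forced for any tilting generator and both severely constrain which bundles can occur, so they provide both a consistency check and a handle on identifying individual summands.

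The main obstacle is the second step, and within it the emergence, for $n>4$, of increasingly exotic indecomposable summands. Already at $n=4$ the bundle $\taut^{\perp}$ is not a Schur functor of $\taut$; for larger $n$ one should expect summands built from $\taut^{\perp}$ and its wedge powers (whose rank grows with $n$) and iterated extensions of line bundles engineered, as $\exten$ is, to kill the higher cohomology on the cotangent bundle that obstructs tiltingness. Recognizing such a bundle from a bare transition matrix is delicate, and nothing \emph{a priori} guarantees that each of them is among Tseu's $\mathcal{E}_\bullet$; establishing that match---presumably via a recursion in $n$ that Tseu's construction also satisfies, or via a characterization of each summand by its class in $K$-theory together with its slope---is the crux of the argument. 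The combinatorial classification of reduced words in the first step is routine but laborious, and the transition-matrix computations, though mechanical, grow in complexity with $n$, so some conceptual input---a clean recursive structure, or an identification of $\End(\tilting^{\Bi})=e(\Bi)\mathring{R}e(\Bi)$ with the endomorphism algebra of a bundle already understood---seems indispensable for keeping the argument finite.
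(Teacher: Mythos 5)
The statement you are addressing is stated in the paper as a conjecture, and the paper does not prove it: the authors verify it only for $n=4$ (by matching each summand $\tilting^{\Bi}$ against Tseu's bundles $\mathcal{E}_\bullet$ after a twist by $\cL^{-1}$, as in \Cref{tseu}) and offer a structural heuristic via the stratified Mukai flop and the Cautis--Kamnitzer--Licata equivalence, together with the observation that for two red points there is only one cyclic order, so $\tilting$ is canonical up to a line bundle twist. Your text is a research plan rather than a proof, and its decisive steps are exactly the ones that remain open. Concretely: (a) you do not establish the higher-rank analogue of \Cref{wordlemma} nor carry out the identification of the reduced summands $\tilting^{\Bi}$ as geometric bundles --- for $n>4$ the expected summands include iterated extensions analogous to $\exten$ and bundles built from $\taut^{\perp}$ whose recognition from transition matrices is precisely the ``ad hoc analysis'' the paper warns will rapidly cease to be practical; and (b) you concede that nothing guarantees each such summand appears among Tseu's $\mathcal{E}_\bullet$, which is the entire content of the conjecture. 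The proposed induction on $n$ via $\Gr(2,n-1)\subset\Gr(2,n)$ and the hoped-for recursion in Tseu's construction are not formulated, let alone proved, so the argument does not close.

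Two smaller cautions. First, your count of reduced words need not equal the number of indecomposable summands: distinct words can yield isomorphic bundles (already at $n=4$ the idempotents in \eqref{eq:rank-2-words} produce only four bundles among six words once multiplicities and coincidences are accounted for), and equiconstitution only requires the \emph{sets} of indecomposables to agree, so the bookkeeping must be organized around isomorphism classes, not words; matching against $\rk K(T^*\Gr(2,n))=\binom{n}{2}$ is a useful consistency check but not a classification. Second, $K$-theory classes and $\Ext$-vanishing constraints, while genuinely restrictive, do not by themselves identify a bundle (e.g.\ they cannot distinguish an indecomposable extension from the split one without the cohomological characterization used in \Cref{indecomp-ext}), so the identification step will still require explicit transition-function or equivariant arguments of the kind carried out in \Cref{sec:non-trivial}, or the ``more systematic approach'' the paper itself says is needed.
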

One connection that Tseu emphasizes strongly is the interplay of his bundle and the stratified Mukai flop denoted $\mathbb{T}$ there.  
We believe that there is a good explanation for this connection.  The two different symplectic resolutions $\fM,\fM'$ correspond to $\Proj$'s of the algebras $\mathbf{A}=\bigoplus_{m=0}^{\infty}e\mathring{T}^me$ and $\mathbf{A}'=\bigoplus_{m=0}^{\infty}e\mathring{T}^{-m}e$ where we reverse the direction of rotation of the first red strand. We can apply the localization construction on both resolutions to get tilting bundles with endomorphism ring $\mathring{R}$ on both $\fM$ and $\fM'$.  The Cautis--Kamnitzer--Licata equivalence (which is really only unique up to tensor with a line bundle) can be chosen to send one such tilting bundle to the other.  

The Grassmannian case is quite special from this perspective, since the endomorphism ring of the resulting tilting generator depends on the cyclic ordering of red points on the circle (which in terms of characteristic $p$ quantization is the choice of quantization parameter).  Of course, there is only one cyclic order of two points on a circle, so the tilting generator $\tilting$ is unique up to tensor product with a line bundle, whereas other Coulomb branches can have many different tilting bundles with different endomorphism algebras.  

\subsection{Comparison to Toda-Uehara}

Let us discuss in a bit more detail the comparison of our construction with that of \cite{todaTiltingGenerators2010}.  They construct a tilting generator by an inductive process.  Define a series of vector bundles $\mathcal{E}_k$ by the construction:
\begin{algorithm}\hfill
\begin{enumerate}
    \item Let $\mathcal{E}_0=\mathcal{O}_X$.
    \item Given $\mathcal{E}_{k-1}$ for $0<k<4$, consider $\Ext^1(\mathcal{E}_{k-1},\mathcal{L}^{-k}$; this is the only nonzero Ext group between this pair and represents the only obstruction to $\mathcal{E}_{k-1}\oplus \mathcal{L}^{-k}$ being a tilting bundle.

As a module over $\End(\mathcal{E}_{k-1})$, the group $\Ext^1(\mathcal{E}_{k-1},\mathcal{L}^{-k})$ is finitely generated and any set of $r$ generators gives an extension \[\mathcal{L}^{-k}\to \mathcal{N}_{k}\to \mathcal{E}_{k-1}^{\oplus r}\] such that $\Ext^1(\mathcal{E}_{k-1},\mathcal{N}_{k})=0$.  

Note that we can often find a smaller summand $\mathcal{N}_{k}'$ of $\mathcal{N}_{k}$ containing $\mathcal{L}^{-k}$; the complementary summand is already a summand of $\mathcal{E}_{k-1}^{\oplus r}$, so $\mathcal{E}_{k-1}\oplus \mathcal{N}_{k}$ and $\mathcal{E}_{k-1}\oplus \mathcal{N}_{k}'$ are equiconstituted.  In particular, this shows that if we have two generating sets, the resulting bundles $\mathcal{E}_{k-1}\oplus \mathcal{N}_{k}$ are equiconsituted. 

Let $\mathcal{E}_{k}=\mathcal{E}_{k-1}\oplus \mathcal{N}_{k}'$, which is unique up to equiconsitution.

\item If $k=4$, we perform a more complicated process to cancel all  $\Ext^i(\mathcal{E}_{3},\mathcal{L}^{-4})$ with $i>0$; this is necessary because this Ext-group can be non-zero for $i\in [1,4].$
    \end{enumerate}
\end{algorithm}
\begin{proposition}\label{prop:TU-comparison}
    We can make choices so that $\mathcal{E}_3\cong \mathcal{O}_X\oplus \mathcal{L}^{-1}\oplus \mathcal{H}\oplus \mathcal{H}\otimes \mathcal{L}^{-1}$.  That is, $\mathcal{E}_{3}$ is a summand of $\tilting$.
\end{proposition}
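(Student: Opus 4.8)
The plan is to run the Toda--Uehara inductive construction of \cite{todaTiltingGenerators2010} through $k=1,2,3$, the only inputs being the cohomology of low-degree line bundles on $T^*X$ and \Cref{indecomp-ext}. First I would record the cohomological facts needed. Since $\pi_{\ast}\cO_{T^*X}\cong\Sym^{\bullet}T_X$ and $T_X\cong\mathcal{T}^{\ast}\otimes\mathcal{Q}$, Cauchy's formula exhibits $\mathcal{L}^{m}\otimes\Sym^{\bullet}T_X$ as a sum of bundles $\Sym^{a}\mathcal{T}^{\ast}\otimes\Sym^{a}\mathcal{Q}\otimes\mathcal{L}^{m+2b}$, and Borel--Weil--Bott on $X=\Gr(2,4)$ (using $K_X\cong\mathcal{L}^{-4}$, so that $\mathcal{L}^{-1},\mathcal{L}^{-2},\mathcal{L}^{-3}$ are acyclic on $X$) together with the Leray spectral sequence yields: $\Ext^1_{T^*X}(\cO,\mathcal{L}^{-1})=0$; the identity \eqref{eq:ext1}, in particular that $\Ext^1_{T^*X}(\cO,\mathcal{L}^{-2})$ is free of rank $1$ over $\Gamma(T^*X;\cO)$ on the minimal-weight class $\gamma$ of \Cref{indecomp-ext}; the vanishings $H^1(T^*X;\mathcal{L})=H^1(T^*X;\mathcal{L}^2)=0$; and a description of $H^1(T^*X;\mathcal{L}^{-3})$. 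Tensoring \eqref{eq:ext1} by $\mathcal{L}^{-1}$ gives, moreover, the minimal-weight class $\gamma''\in\Ext^1_{T^*X}(\mathcal{L}^{-1},\mathcal{L}^{-3})$, whose associated extension is precisely $0\to\mathcal{L}^{-3}\to\exten\otimes\mathcal{L}^{-1}\to\mathcal{L}^{-1}\to0$.

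Steps $k=1,2$: since $\Ext^1(\cO,\mathcal{L}^{-1})=0$ we get $\mathcal{E}_1=\cO\oplus\mathcal{L}^{-1}$. At $k=2$, $\Ext^1(\mathcal{E}_1,\mathcal{L}^{-2})=\Ext^1(\cO,\mathcal{L}^{-2})\oplus\Ext^1(\mathcal{L}^{-1},\mathcal{L}^{-2})$, the second summand being $\cong H^1(T^*X;\mathcal{L}^{-1})=0$; every off-diagonal endomorphism of $\mathcal{E}_1$ sends the surviving class into this zero group, so as an $\End(\mathcal{E}_1)$-module the $\Ext$ group is just $\Ext^1(\cO,\mathcal{L}^{-2})$ with its $\Gamma(T^*X;\cO)$-action, hence cyclic on $\gamma$ ($r=1$). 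The universal extension splits over the $\mathcal{L}^{-1}$-summand, giving $\mathcal{N}_2\cong\mathcal{L}^{-1}\oplus\mathcal{H}'$ with $0\to\mathcal{L}^{-2}\to\mathcal{H}'\to\cO\to0$ of class $\gamma$; the universal-extension property forces $H^1(T^*X;\mathcal{H}')=0$, so $\mathcal{H}'\cong\exten$ by \Cref{indecomp-ext} and $\mathcal{E}_2=\cO\oplus\mathcal{L}^{-1}\oplus\exten$. The long exact sequences of $0\to\mathcal{L}^{-2}\to\exten\to\cO\to0$, together with the line-bundle vanishings above, then give $\Ext^1(\mathcal{E}_2,\mathcal{E}_2)=0$.

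Step $k=3$ is the crux. One decomposes $\Ext^1(\mathcal{E}_2,\mathcal{L}^{-3})=\Ext^1(\cO,\mathcal{L}^{-3})\oplus\Ext^1(\mathcal{L}^{-1},\mathcal{L}^{-3})\oplus\Ext^1(\exten,\mathcal{L}^{-3})$, computes the third summand from the first via $\Hom(-,\mathcal{L}^{-3})$ applied to $0\to\mathcal{L}^{-2}\to\exten\to\cO\to0$ (using $\Ext^1(\mathcal{L}^{-2},\mathcal{L}^{-3})\cong H^1(T^*X;\mathcal{L}^{-1})=0$), and observes that the second summand is free of rank $1$ over $\Gamma(T^*X;\cO)$ on $\gamma''$. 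The goal is to produce a generating set for this right $\End(\mathcal{E}_2)$-module in which $\gamma''$ (placed in the $\mathcal{L}^{-1}$-slot) is one member and the associated universal extension $0\to\mathcal{L}^{-3}\to\mathcal{N}_3\to\mathcal{E}_2^{\oplus r}\to0$ has $\mathcal{N}_3\cong(\exten\otimes\mathcal{L}^{-1})\oplus(\text{a summand of }\mathcal{E}_2^{\oplus r})$, so that $\mathcal{N}_3'=\exten\otimes\mathcal{L}^{-1}$ and $\mathcal{E}_3=\cO\oplus\mathcal{L}^{-1}\oplus\exten\oplus(\exten\otimes\mathcal{L}^{-1})$. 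Concretely I would verify: (i) $\Ext^1(\mathcal{E}_2,\exten\otimes\mathcal{L}^{-1})=0$, which via $\Hom(\mathcal{E}_2,-)$ on $0\to\mathcal{L}^{-3}\to\exten\otimes\mathcal{L}^{-1}\to\mathcal{L}^{-1}\to0$ and $\Ext^1(\mathcal{E}_2,\mathcal{L}^{-1})=0$ reduces to the surjectivity of cupping with $\gamma''$ from $\Hom(\mathcal{E}_2,\mathcal{L}^{-1})$ onto $\Ext^1(\mathcal{E}_2,\mathcal{L}^{-3})$; and (ii) the connecting map $\Hom(\mathcal{E}_2,\mathcal{E}_2^{\oplus r})\to\Ext^1(\mathcal{E}_2,\mathcal{L}^{-3})$ for the presentation of $\exten\otimes\mathcal{L}^{-1}$, padded by split copies of $\mathcal{E}_2$, is onto. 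Since $\mathcal{E}_{k-1}\oplus\mathcal{N}_k$ is well defined up to equiconstitution, (i)--(ii) pin down $\mathcal{E}_3$ as claimed; comparing with the explicit form of $\tilting$ computed above shows $\cO$, $\mathcal{L}^{-1}$ and $\exten$ appear among its summands, giving the asserted common summand.

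The main obstacle is the surjectivity in (i): unlike at $k=1,2$ the module $\Ext^1(\mathcal{E}_2,\mathcal{L}^{-3})$ is not cyclic, so one must compute $H^1(T^*X;\mathcal{L}^{-3})$ and $\Ext^1(\exten,\mathcal{L}^{-3})$ — their dimensions and $\C^{\ast}$-weights — precisely enough to see that the images of $\gamma''$ under the off-diagonal homomorphisms $\Hom(\mathcal{L}^{-1},\cO)=\Gamma(T^*X;\mathcal{L})$, $\Hom(\exten,\cO)$ and $\Hom(\exten,\mathcal{L}^{-1})$, together with the $\Gamma(T^*X;\cO)$-action, exhaust it. This is the same sort of bookkeeping the excerpt flags for the $k=4$ step; carrying the $\C^{\ast}$-scaling weights throughout (so that "minimal weight" isolates $\gamma$ and $\gamma''$) is what makes the count tractable.
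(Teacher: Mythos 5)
Your scaffolding coincides with the paper's proof: the same identifications $\mathcal{E}_1=\cO\oplus\cL^{-1}$ and (via \eqref{eq:ext1} and \Cref{indecomp-ext}) $\mathcal{E}_2=\cO\oplus\cL^{-1}\oplus\exten$, the same three-term decomposition of $\Ext^1(\mathcal{E}_2,\cL^{-3})$, the same long exact sequence using $\Ext^1(\cL^{-2},\cL^{-3})\cong H^1(T^*X;\cL^{-1})=0$, and the same candidate generator $\gamma''$ whose extension is $\exten\otimes\cL^{-1}$. The genuine gap is that the one step carrying all the content of the proposition is exactly your item (i) --- that cupping with $\gamma''$ maps $\Hom(\mathcal{E}_2,\cL^{-1})$ onto $\Ext^1(\mathcal{E}_2,\cL^{-3})$, i.e.\ that $\gamma''$ by itself generates this $\End(\mathcal{E}_2)$-module, so that one may take $r=1$ and $\mathcal{N}_3'\cong\exten\otimes\cL^{-1}$ --- and you do not prove it: you label it ``the main obstacle'' and defer it to an unspecified computation of $H^1(T^*X;\cL^{-3})$ and $\Ext^1(\exten,\cL^{-3})$, whose values you never state. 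Without that surjectivity the Toda--Uehara step could require further generators, and the resulting $\mathcal{N}_3$ could a priori have indecomposable summands other than $\exten\otimes\cL^{-1}$ and summands of $\mathcal{E}_2$, in which case the conclusion that $\mathcal{E}_3$ is a summand of $\tilting$ would not follow. As written, the proposal is a correct reduction of the proposition to (i), not a proof of it.

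For comparison, the paper closes precisely this point by computing all three pieces: $\Ext^1(\cO,\cL^{-3})\cong H^0(T^*X;\cL)$, $\Ext^1(\cL^{-1},\cL^{-3})\cong H^0(T^*X;\cO)$ (free of rank one on $\gamma''$), and $\Ext^1(\exten,\cL^{-3})\cong H^0(T^*X;\cL)$, with the long exact sequence showing that the restriction map $\Ext^1(\cO,\cL^{-3})\to\Ext^1(\exten,\cL^{-3})$ is onto; generation then follows because the $\cO$- and $\exten$-components are matched by composing $\gamma''$ with the $H^0(T^*X;\cL)$-worth of morphisms between $\cL^{-1}$ and the other summands of $\mathcal{E}_2$, together with the $\Gamma(T^*X;\cO)$-action on the $\cL^{-1}$-component. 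One further caution if you carry this out: in the module structure relevant to the universal extension (precomposition, which is what makes the connecting map $\Hom(\mathcal{E}_2,\mathcal{E}_2^{\oplus r})\to\Ext^1(\mathcal{E}_2,\cL^{-3})$ surjective), only morphisms with target the $\cL^{-1}$-summand act nontrivially on $\gamma''$, so the spaces you must control are $\Hom(\cO,\cL^{-1})$ and $\Hom(\exten,\cL^{-1})$ rather than the arrows you list in the opposite direction; it is the copy of $H^0(X;\cL)$ sitting in fiber degree two of $\Gamma(T^*X;\cL^{-1})$ that supplies the needed elements, and verifying that its cup product with $\gamma''$ hits the generators of $H^1(T^*X;\cL^{-3})$ is the computation your plan still owes.
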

\begin{proof}
    By definition, we have $\mathcal{E}_0=\mathcal{O}_X$.  Furthermore, 
$\mathcal{E}_1=\mathcal{O}_X\oplus \mathcal{L}^{-1}$ since $\Ext^1(\mathcal{E}_{0},(\mathcal{L}^{-1})=0$.

On the other hand, by \eqref{eq:ext1}, we have $\Ext^1(\mathcal{E}_1,\mathcal{L}^{-2})=H^0(T^*X;\mathcal{O})$, and so $\mathcal{N}_2$ comes from the unique lowest-degree class, which is a generator of this module.  That is,  \[\mathcal{N}_2\cong \mathcal{H}\oplus \mathcal{L}^{-1}.\]  This is the first example where it is more convenient to take $\mathcal{N}_2'\cong \mathcal{H}$, so that we have 
\[\mathcal{E}_2\cong \mathcal{O}_X\oplus \mathcal{L}^{-1}\oplus \mathcal{H}.\]  Taking $\mathcal{N}_2$ instead would have just given us a second copy of $\mathcal{L}^{-1}$.

To construct $\mathcal{E}_3$, consider \begin{align*}
\Ext^1(\mathcal{E}_2,\mathcal{L}^{-3})&\cong \Ext^1(\mathcal{O},\mathcal{L}^{-3})\oplus \Ext^1(\mathcal{L}^{-1},\mathcal{L}^{-3})\oplus \Ext^1(\mathcal{H},\mathcal{L}^{-3})\\
    &\cong H^0(T^*X;\mathcal{L})\oplus H^0(T^*X;\mathcal{O})\oplus H^0(T^*X;\mathcal{L})
\end{align*} 
We have a long exact sequence \[\cdots \longrightarrow\Ext^1(\mathcal{O},\mathcal{L}^{-3}) \longrightarrow{ \Ext^1(\mathcal{H},\mathcal{L}^{-3})} \longrightarrow { \Ext^1(\mathcal{L}^{-2},\mathcal{L}^{-3})}=0.\]
In particular, the module $\Ext^1(\mathcal{E}_3,\mathcal{L}^{-3})$ is generated by the class in $\Ext^1(\mathcal{L}^{-1},\mathcal{L}^{-3})$ corresponding to $\mathcal{H}\otimes \mathcal{L}^{-1}$. All other classes in other summands come from this one using elements of $\Hom(\mathcal{L}^{-1},\mathcal{O})\cong \Hom(\mathcal{L}^{-1},\mathcal{H})\cong H^0(T^*X;\mathcal{L})$.  
\end{proof}

\section*{Acknowledgements}

Thanks to Wei Tseu for discussing his work with us while it was still in progress and for helpful references.  Many thanks to Joel Kamnitzer, Travis Schedler, Alex Weekes, Oded Yacobi, and Baorui Zhou for useful discussions on
these topics.

Supported by the NSERC through a Discovery Grant and by MITACS through a Globalink Research Award. This research was supported in part by Perimeter Institute for Theoretical Physics. Research at Perimeter Institute is supported in part by the Government of Canada through the Department of Innovation, Science and Economic Development Canada and by the Province of Ontario through the Ministry of Colleges and Universities.

{\renewcommand{\markboth}[2]{}
\printbibliography}
\end{document}